\theoremstyle{plain}
\newtheorem{theorem}{Theorem}[section]
\newtheorem{lemma}[theorem]{Lemma}
\newtheorem{corollary}[theorem]{Corollary}
\theoremstyle{definition}
\newtheorem{definition}[theorem]{Definition}
\newtheorem{remark}[theorem]{Remark}
\newtheorem{example}[theorem]{Example}
\newtheorem{question}[theorem]{Question}
\newcommand{\N}{\mathbb{N}} %% Conjunto naturales:     \N
\newcommand{\Z}{\mathbb{Z}} %% Conjunto enteros:       \Z
\newcommand{\R}{\mathbb{R}} %% Conjunto reales:        \R
\newcommand{\C}{\mathbb{C}} %% Conjunto complejos:     \C
\newcommand{\D}{\mathbb{D}} %% Disco unidad:           \D
\newcommand{\T}{\mathbb{T}} %% Circ. unidad:           \T
\begin{document}

\title{Frequently recurrent operators}
\author{Antonio Bonilla, Karl-G. Grosse-Erdmann, Antoni L\'opez-Mart\'{\i}nez, Alfred Peris\thanks{The work was partially supported by MCIN/AEI/10.13039/501100011033, Projects PID2019-105011GB-I00  and MTM2016-75963-P; the second author was also supported by FNRS Grant PDR T.0164.16; the third author was supported the Ministry of Education of Spain (grant FPU2019/04094); the fourth author was also supported by Generalitat Valenciana, Projects PROMETEO/2017/102 and  PROMETEU/2021/070.}}
\date{}

%\subjclass[2010]{Primary 47A16; Secondary 47B37}

%\keywords{Recurrent operator, frequently recurrent operator}

\maketitle

\begin{abstract}
Motivated by a recent investigation of Costakis et al. on the notion of recurrence in linear dynamics, we study various stronger forms of recurrence for linear operators, in particular that of frequent recurrence. We study, among other things, the relationship between a type of recurrence and the corresponding notion of hypercyclicity, the influence of power boundedness, and the interplay between recurrence and spectral properties. We obtain, in particular, Ansari- and Léon-M\"uller-type theorems for $\mathcal{F}$-recurrence under very weak assumptions on the Furstenberg family $\mathcal{F}$. This allows us, as a by-product, to deduce Ansari- and Léon-M\"uller-type theorems for $\mathcal{F}$-hypercyclicity.
\end{abstract}

\section{Introduction}

The notion of recurrence for a dynamical system has a very long history, whose systematic study goes back to the work of Gottschalk and Hedlund \cite{GoHe} and Furstenberg \cite{Fur3} (see also \cite{Glas} and \cite{OZ} for recent advances). In linear dynamics, however, recurrent operators have only recently been studied systematically in a fundamental paper by Costakis, Manoussos and Parissis \cite{CMP}; see also \cite{CP}.

The literature on (non-linear) dynamical systems abounds with notions that are similar to recurrence. Of course, periodicity is a very strong form of recurrence, and it is fundamental in any dynamical theory. But some other forms of recurrence have also recently been looked at in linear dynamics, see \cite{GMOP}, \cite{YW}, \cite{HHY18}, \cite{GriMaMe}, \cite{CaMu}.

The aim of this paper is to study various notions of recurrence in the context of linear dynamics. The appropriate framework is that of $\mathcal{F}$-recurrence for arbitrary Furstenberg families $\mathcal{F}$. However, for better readability we will mainly concentrate on those types of recurrence that deserve the greatest interest from the point of view of linear dynamics. We will discuss the general notion of $\mathcal{F}$-recurrence in Section \ref{s-frec}.

Throughout Sections 1 to 7, $X$ will denote a Fr\'echet space and $T:X\rightarrow X$ a (continuous, linear) operator, briefly $T\in L(X)$. A vector $x\in X$ is called recurrent for $T$ if there exists a strictly increasing sequence $(n_k)_{k\in \mathbb{N}}$ of positive integers such that
\[
T^{n_k}x\rightarrow x\quad \text{as $k\rightarrow\infty$.}
\]
We will denote by $\text{Rec}(T)$ the set of recurrent vectors for $T$, and $T$ is called recurrent if $\text{Rec}(T)$ is dense in $X$.
The latter differs from, but is equivalent to the definition of recurrence given by Costakis et al., see \cite[Proposition 2.1 with Remark 2.2]{CMP} and Remark \ref{r-topfrec} below.

A vector $x$ is called periodic for $T$ if there is some $n\geq 1$ such that $T^nx=x$. The set of periodic points of $T$ will be denoted by $\text{Per}(T)$. The vector $x$ is called uniformly recurrent for $T$ if, for any neighbourhood $U$ of $x$, the return set
\[
N(x,U)=\{n\geq 0 : T^nx\in U\}
\]
is syndetic, that is, has bounded gaps. The set of uniformly recurrent vectors will be denoted by $\text{URec}(T)$. Uniformly recurrent vectors are often called almost periodic in the literature, see \cite{GoHe}, but also syndetically recurrent or strongly recurrent, see \cite{BK04}, \cite{KoSn09}.

In addition, we fix the following terminology as suggested by recent work in linear dynamics.

\begin{definition}
Let $T\in L(X)$. A vector $x\in X$ is called \textit{frequently recurrent} (\textit{upper frequently recurrent}, \textit{reiteratively recurrent}) for $T$ if, for any neighbourhood $U$ of $x$, the return set
\[
N(x,U)=\{n\geq 0 : T^nx\in U\}
\]
has positive lower density (positive upper density, positive upper Banach density, respectively). The corresponding set of vectors is denoted by $\text{FRec}(T)$ ($\text{UFRec}(T)$, $\text{RRec}(T)$, respectively). If this set is dense in $X$ then the operator is called \textit{frequently recurrent} (\textit{upper frequently recurrent}, \textit{reiteratively recurrent}, respectively).
\end{definition}

We recall that, for a subset $A$ of $\mathbb{N}_0$, its lower density is defined as
\[
\underline{\mbox{dens}}\,(A) = \liminf_{N\to\infty}\frac{\text{card} \{n\in A : n\leq N\}}{N+1},
\]
its upper density as
\[
\overline{\mbox{dens}}\,(A) = \limsup_{N\to\infty}\frac{\text{card} \{n\in A : n\leq N\}}{N+1},
\]
and its upper Banach density as
\[
\overline{\mbox{Bd}}\,(A) = \lim_{N\to\infty}\sup_{m\geq 0}\frac{\text{card} \{n\in
A : m\le n\le m+N\}}{N+1};
\]
see \cite{GTT10} for other, equivalent definitions of the upper Banach density; see also \cite{BG}.

The notion of upper frequent recurrence was first introduced by Costakis and Parissis \cite{CP}, while Grivaux and Matheron \cite{GrMa14} have introduced a concept of frequent recurrence that is (at least formally) weaker than ours.

In non-linear dynamics, frequently recurrent points have been called weakly almost periodic, upper frequently recurrent points have been called quasi-weakly almost periodic or ergodic, and reiteratively recurrent points have been called positive Banach upper density points, Banach recurrent points, or essentially recurrent points, see \cite{HeYiZh13}, \cite{Li12}, \cite{YZ12}, \cite{WaHu18}, \cite{BeDo08}.

As pointed out by the referee, there is an important notion of (topological) multiple recurrence studied for linear operators in \cite{CP}. It was observed in the recent article \cite{CarMur} that it is equivalent to $\mathcal{AP}$-recurrence, where $\mathcal{AP}$ is the Furstenberg family consisting of those subsets of $\mathbb{N}_0$ containing arbitrarily long arithmetic progressions. Let $\text{APRec}(T)$ denote the set of $\mathcal{AP}$-recurrent vectors for an operator $T$.

We have the following inclusions, which are obvious, except $\text{RRec}(T) \subset \text{APRec}(T)$, which was observed in \cite{CarMur}.
\begin{equation}\label{eq-incl}
\text{Per}(T)\subset \text{URec}(T)\subset \text{FRec}(T)\subset \text{UFRec}(T)\subset \text{RRec}(T) \subset \text{APRec}(T)\subset \text{Rec}(T).
\end{equation}

The paper is organized as follows. In Section \ref{s-recsize} we compare recurrence properties with their corresponding notions of hypercyclicity. In Section \ref{s-recpower} we study the influence of power boundedness on recurrence. Section \ref{s-recspec} is devoted to some structural properties of recurrence; in particular, we solve a problem of Grivaux et al.~\cite{GriMaMe}. Weighted backward shifts are studied in Section \ref{s-recshift}, where we also show that all the inclusions in \eqref{eq-incl} are strict (in a rather strong sense). Further operators are considered in Section \ref{s-receigen}; a common feature of many of these operators is a large supply of unimodular eigenvectors, which implies $\text{IP}^*$-recurrence, an interesting strengthening of uniform recurrence. Thus, as a preparation, we briefly discuss  $\text{IP}^*$-recurrence in Section~\ref{s-iprec}. In the final Section \ref{s-frec} we introduce and discuss the general notion of $\mathcal{F}$-recurrence for operators on general topological vector spaces. As a by-product of our work we obtain Ansari- and Le\'on-M\"uller-type results for $\mathcal{F}$-hypercyclicity, see Theorem~\ref{t-AnsLeMuHyp}.

Our investigations have led to several open problems, see Questions \ref{q2}, \ref{q3},  \ref{q4}, \ref{q5}, \ref{q8}\footnote{Question \ref{q8} has recently been solved in the negative; see \cite{CarMur2}.}, \ref{q9}, and \ref{q10}.

For any unexplained, but standard notions from linear dynamics we refer to the textbooks \cite{BaMa09} and \cite{GrPe11}.

\section{Recurrence, hypercyclicity, and the size of the set of recurrent vectors}\label{s-recsize}

The central notion in linear dynamics is that of a hypercyclic vector: it is characterized by having a dense orbit. In a similar vein,  a vector $x\in X$ is called frequently hypercyclic (upper frequently hypercyclic, reiteratively hypercyclic) for $T$ if, for every non-empty open subset $U$ of $X$, the return set $N(x,U)$ has positive lower density (positive upper density, positive upper Banach density, respectively). An operator that possesses such a vector is called frequently hypercyclic (upper frequently hypercyclic, reiteratively hypercyclic, respectively), see \cite{BaGr06}, \cite{Shk09}, \cite{BMPP16}, \cite{BG}, and the textbooks \cite{BaMa09}, \cite{GrPe11}. Note that uniform recurrence admits no hypercyclic analogue, see \cite[Proposition 2]{BMPP16}.

Trivially, every notion of hypercyclicity implies the corresponding notion of recurrence. The converse, of course, is not true as seen by the identity operator. In this section we ask under which additional assumptions on the operator the converse does become true.

Our first result elaborates on \cite[Theorem 14]{BMPP16}.

%%%%%%%%%%%%%%%%%%%%%%%%

\begin{theorem}\label{reit-hyp}
Let $T\in L(X)$. Then the following assertions are equivalent:
\begin{itemize}	
\item[\rm (a)] $T$ is reiteratively hypercyclic;	
\item[\rm (b)] $T$ is hypercyclic, and $\text{\emph{RRec}}(T)$ is a residual set;
\item[\rm (c)] $T$ is hypercyclic, and $\text{\emph{RRec}}(T)$ is of second category;
\item[\rm (d)] $T$ admits a hypercyclic reiteratively recurrent vector;	
\item[\rm (e)] $T$ is hypercyclic and reiteratively recurrent;	
\item[\rm (f)] $T$ is hypercyclic, and every hypercyclic vector is reiteratively hypercyclic.
\end{itemize}	
In that case the set of hypercyclic reiteratively recurrent vectors is residual.	
\end{theorem}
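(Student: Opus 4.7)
My plan is to organise the proof around the cycle $(a)\Rightarrow(b)\Rightarrow(c)\Rightarrow(f)\Rightarrow(a)$, to deduce (e) from (a), and to handle (d) via the additional equivalence $(a)\Leftrightarrow(d)$, which is essentially the content of \cite[Theorem~14]{BMPP16} and which we are extending here. The central tool is a single Costakis-type continuity argument for $(f)\Rightarrow(a)$; the deeper step $(a)\Rightarrow(e)$ asserts that \emph{every} hypercyclic vector of a reiteratively hypercyclic operator is itself reiteratively hypercyclic, and this is where I expect the main difficulty. The residuality statement in the ``In that case'' clause will be immediate from (e).

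\medskip
\noindent\textbf{Core equivalence $(a)\Leftrightarrow(f)$.} That $(a)\Rightarrow(f)$ is trivial, since a reiteratively hypercyclic vector is both hypercyclic and reiteratively recurrent. For $(f)\Rightarrow(a)$, let $x\in HC(T)\cap\text{RRec}(T)$ and fix any non-empty open $V\subseteq X$. Hypercyclicity of $x$ gives some $n_0$ with $T^{n_0}x\in V$; by continuity of $T^{n_0}$ at $x$ there is a neighbourhood $U$ of $x$ with $T^{n_0}(U)\subseteq V$. Reiterative recurrence of $x$ gives $\overline{\mathrm{Bd}}(N(x,U))>0$, and from $T^{n_0+n}x=T^{n_0}(T^nx)\in V$ for every $n\in N(x,U)$ we deduce $n_0+N(x,U)\subseteq N(x,V)$; translation-invariance of upper Banach density then yields $\overline{\mathrm{Bd}}(N(x,V))>0$, so $x$ is reiteratively hypercyclic.

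\medskip
\noindent\textbf{Baire category and the remaining conditions.} Granted $(a)\Rightarrow(e)$, the rest closes routinely. $(e)\Rightarrow(b)$: since reiterative hypercyclicity implies reiterative recurrence, (e) yields $HC(T)\subseteq\text{RRec}(T)$, and $HC(T)$ is residual (a standard fact for hypercyclic operators on a separable Fréchet space), hence so is $\text{RRec}(T)$. $(b)\Rightarrow(c)$ is trivial. $(c)\Rightarrow(f)$: because $HC(T)$ is residual, $X\setminus HC(T)$ is meager; if $\text{RRec}(T)$ were contained in $X\setminus HC(T)$ it would itself be meager, contradicting (c), so $HC(T)\cap\text{RRec}(T)\neq\emptyset$, which is (f). Separately, $(e)\Rightarrow(d)$ follows from the density of $HC(T)\subseteq\text{RRec}(T)$, while $(d)\Rightarrow(a)$ is obtained from \cite[Theorem~14]{BMPP16}, closing the loop.

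\medskip
\noindent\textbf{Main obstacle $(a)\Rightarrow(e)$, and residuality.} Using $(f)\Rightarrow(a)$, this reduces to showing that every hypercyclic $x$ is reiteratively recurrent whenever $T$ admits some reiteratively hypercyclic vector $y$. The naive transfer---picking $n_0$ with $T^{n_0}x$ close to $y$ and attempting to push the positive-upper-Banach-density set $N(y,U)$ forward via $T^{n_0}$---fails because $\|T^n\|$ is not uniformly bounded in $n$, so the perturbation $T^n(T^{n_0}x-y)$ cannot be controlled along the relevant indices. A more delicate argument is required, essentially the technical heart of \cite[Theorem~14]{BMPP16}, and this is the step I expect to be the main obstacle. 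Once (e) is established, $HC(T)\subseteq\text{RRec}(T)$, so $HC(T)\cap\text{RRec}(T)=HC(T)$ is residual, which is precisely the last sentence of the theorem.
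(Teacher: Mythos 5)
Your cycle $(a)\Rightarrow(e)\Rightarrow(b)\Rightarrow(c)\Rightarrow(f)\Rightarrow(a)$ is fine as far as it goes: the continuity argument for $(f)\Rightarrow(a)$ is exactly the paper's, the category step $(c)\Rightarrow(f)$ is correct, and quoting \cite[Theorem 14]{BMPP16} for $(a)\Rightarrow(e)$ is legitimate (the paper does the same). The genuine gap is condition (d). You only ever derive (d) \emph{from} the other statements (via $(e)\Rightarrow(d)$); for the converse you assert that $(d)\Rightarrow(a)$ ``is obtained from \cite[Theorem 14]{BMPP16}'', but the hypothesis of that theorem is precisely (a): it says that for a \emph{reiteratively hypercyclic} operator every hypercyclic vector is reiteratively hypercyclic. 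It says nothing about an operator that is merely hypercyclic with a dense set of reiteratively recurrent vectors, which is exactly what has to be upgraded here. Nor can you escape by Baire category: under (d) the set $\text{RRec}(T)$ is only known to be dense, not of second category (see the example in the paper of a reiteratively recurrent operator whose $\text{RRec}$ is of first category), and a dense set may be disjoint from the residual set of hypercyclic vectors. So, as written, the equivalence of (d) with the rest is unproved, and the ``main obstacle'' you flag is needed at precisely this spot.

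What closes the gap is the block-transfer argument from the \emph{proof} of \cite[Theorem 14]{BMPP16}, which the paper repeats to establish $(d)\Rightarrow(e)$, and which only requires a reiteratively \emph{recurrent} vector, not a reiteratively hypercyclic one: given a hypercyclic vector $x$ and a non-empty open set $U$, pick a reiteratively recurrent $y\in U$, so that $A=N(y,U)$ has positive upper Banach density; for $n$ large the open set $U_n=\bigcap_{j\in A\cap[0,n]}T^{-j}(U)$ is non-empty (it contains $y$), so hypercyclicity of $x$ gives some $k_n$ with $T^{k_n}x\in U_n$, whence $N(x,U)\supset k_n+(A\cap[0,n])$; letting $n\to\infty$ this forces $N(x,U)$ to have positive upper Banach density, i.e.\ $x$ is reiteratively hypercyclic. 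Note that your worry about controlling the norms of $T^n$ is beside the point: no quantitative estimates are used, only the finite intersection of preimages and the density of the orbit of $x$. With $(d)\Rightarrow(e)$ supplied in this way (it also subsumes your citation for $(a)\Rightarrow(e)$, since $(a)\Rightarrow(d)$ is trivial), your scheme becomes a complete proof, including the final residuality claim.
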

	
\begin{proof} (a) $\Rightarrow$ (b). By \cite[Theorem 14]{BMPP16}, if $T$ is reiteratively hypercyclic then every hypercyclic vector is reiteratively hypercyclic; and the set of hypercyclic vectors is residual.

(b) $\Rightarrow$ (c). This is trivial.
	
(c) $\Rightarrow$ (d). This follows from the fact that the set of hypercyclic vectors of a hypercyclic operator is residual.	
	
(d) $\Rightarrow$ (e). Since $T$ admits a hypercyclic reiteratively recurrent vector $x$, then each element of the orbit of $x$ is also a hypercyclic reiteratively recurrent vector. Thus $T$ is hypercyclic and reiteratively recurrent.
	
(e) $\Rightarrow$ (f). This was essentially shown in the proof of \cite[Theorem 14]{BMPP16}. We repeat the argument for the sake of completeness.
	
Let $x$ be a hypercyclic vector and $U$ a non-empty open set. By hypothesis there is a reiteratively recurrent vector $y\in U$. Thus, $N(y,U) = \{ n\ge 0: T^ny \in U \}$ has positive upper Banach density.
		
Now let $n\ge 0$. Then $U_n = \bigcap_{j\in N(y,U)\cap[0,n]}T^{-j}(U)$ is an open set, and it is non-empty since it always contains $y$. By hypercyclicity of $x$ there is then some $k_n\geq 0$ such that $T^{k_n}x \in U_n$, thus $T^{k_n +j}x \in U$ for every $j \in N(y,U)\cap[0,n]$.

In other words, for every $n$ sufficiently large, there exists $k_n\ge 0$ such that
$$
N(x,U) \supset k_n +(N(y,U)\cap[0,n]).
$$		
This easily implies that $N(x,U) $ has positive upper Banach density. That is, $x$ is reiteratively hypercyclic.
		
(f) $\Rightarrow$ (a). This is trivial.		
\end{proof}

In particular, since periodic points are reiteratively recurrent, we have the following result of Menet \cite{Men17}.

\begin{corollary}
Every chaotic operator is reiteratively hypercyclic.
\end{corollary}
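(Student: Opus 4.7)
The corollary should follow immediately from the equivalence (d) $\Leftrightarrow$ (a) in Theorem~\ref{reit-hyp} together with the inclusion chain \eqref{eq-incl}. My plan is as follows.

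Recall that a \emph{chaotic} operator is, by definition, hypercyclic and has a dense set of periodic points. Thus if $T$ is chaotic, the first half of condition (d) of Theorem~\ref{reit-hyp}, namely that $T$ be hypercyclic, holds by definition.

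To obtain the second half, I would observe that $\text{Per}(T) \subset \text{RRec}(T)$ by the inclusion chain \eqref{eq-incl}; in fact this is direct, since if $x$ is periodic of period $n$, then $T^{kn}x = x$ for every $k\geq 0$, so $N(x,U) \supset n\N_0$ for every neighbourhood $U$ of $x$, and this set has upper Banach density $1/n > 0$. Since $\text{Per}(T)$ is dense in $X$ by chaoticity, $\text{RRec}(T)$ is dense as well, so $T$ is reiteratively recurrent.

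Hence $T$ satisfies condition (d) of Theorem~\ref{reit-hyp}, and the equivalence (d) $\Rightarrow$ (a) gives that $T$ is reiteratively hypercyclic. There is no real obstacle here: the whole content has been packaged into Theorem~\ref{reit-hyp}, and the corollary is simply an observation that chaoticity supplies, for free, both of the ingredients needed by condition~(d).
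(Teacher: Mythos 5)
Your proof is correct and follows exactly the paper's route: the paper also deduces the corollary from Theorem~\ref{reit-hyp} via condition (d), observing that periodic points are reiteratively recurrent (as in \eqref{eq-incl}), so chaoticity supplies both hypercyclicity and density of $\text{RRec}(T)$. Your explicit verification that $N(x,U)\supset n\N_0$ has positive upper Banach density is a fine, if unneeded, elaboration of that observation.
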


In view of the theorem one might wonder whether, for a hypercyclic operator, a single non-zero reiteratively recurrent vector suffices to make it reiteratively hypercyclic. The following example shows that this is not the case.

\begin{example}
By \cite[Theorem 13]{BMPP16}, there exists a mixing operator $S$ on $\ell^2(\N)$ that is not reiteratively hypercyclic, and let $T$ be a mixing and chaotic operator on $\ell^2(\N)$, for example twice the backward shift, $2B$ (see \cite[Example 3.2]{GrPe11}). Then the operator $S\times T$ on $\ell^2(\N)\times\ell^2(\N)$ is also mixing; by the standard quasi-conjugacy argument (see \cite[Proposition 1.42]{GrPe11}) $S\times T$ cannot be reiteratively hypercyclic because $S$ is not; and $(0,y)$ is periodic for $S\times T$ if $y$ is periodic for $T$. So we even have a mixing operator with a non-zero periodic point that is not reiteratively hypercyclic.
\end{example}

If $T$ is recurrent, the set of recurrent vectors for $T$ is residual, see \cite{CMP}. Also, if $T$ is a reiteratively hypercyclic operator, then the set of reiteratively hypercyclic vectors is residual, see \cite{BMPP16}. Thus one would expect the same type of result for reiterative recurrence. Surprisingly, as we see next, this is not the case.

\begin{example}
There is a reiteratively recurrent operator for which the set of reiteratively recurrent vectors is of first category. To see this, let $X=\ell^p(\mathbb{N})$, $1\leq p<\infty$, or $c_0(\mathbb{N})$. We consider the operator $T : X \rightarrow X$ that is defined by $Te_1=e_1$ and
\[
Te_k = \begin{cases} 2 e_{k+1} & \text{if } 2^j \leq k < 2^{j+1}-1, \\
\frac{1}{2^{(2^j-1)}}e_{2^j} & \text{if } k=2^{j+1}-1
\end{cases}
\]
for $j\geq 1$, where $e_k=(\delta_{k,n})_{n\geq 1}$ denotes the $k$-th canonical unit sequence. Obviously, every vector $e_k$, $k\geq 1$, is periodic for $T$, so that $T$ admits a dense set of periodic points. In particular, $T$ is reiteratively recurrent.

We now show that $\text{RRec}(T)$ is of first category. For this, it suffices to show that
\[
G=\{x=(x_n)_{n\geq 1} \in X :|x_{2^j}|>\tfrac{1}{j} \text{ for infinitely many $j\geq 1$}\}
\]
is a residual set that does not contain any reiteratively recurrent vector. In fact, it is easy to see that $G$ is dense, and since
\[
G=\bigcap_{J\geq 1} \bigcup_{j\geq J}\{x\in X :|x_{2^j}|>\tfrac{1}{j}\},
\]
it is a dense $G_\delta$-set, hence residual. On the other hand, suppose that $x\in G \cap \text{RRec}(T)$. Let $U = \{y\in X : \|y-x\| < \frac{1}{2}\}$. Then the set $A=\{n\geq 0 : T^nx\in U\}$ has positive upper Banach density. Thus there is some $\delta>0$ and some $N_0\geq 0$ such that, for any $N\geq N_0$,
\begin{equation}\label{eqLP}
\sup_{m\geq 0}\frac{\text{card} \{n\in [m, m+N] : T^nx\in U\}}{N+1}>\delta.
\end{equation}

Now, since $x=(x_n)_n\in X$, there is some $N_1\geq 0$ such that $|x_n|<\frac{1}{2}$ for every $n\geq N_1$. Consequently we have that if $y \in U$ then $|y_n|<1$ for every $n\geq N_1$. Moreover, since $x \in G$, there is some $j\geq 1$ such that $\frac{j}{2^j}<\frac{\delta}{2}$, $2^j > \max(N_0,N_1)$ and $|x_{2^j}|>\frac{1}{j}$. We then have for any $n=\ell 2^j +k$, $\ell\geq 0$, $j\leq k \leq 2^j-1$,
\[
|[T^n x]_{2^j+k}| = 2^k |x_{2^j}| > \frac{2^k}{j} \geq \frac{2^j}{j} > 1,
\]
so that $T^nx\notin U$. This implies that, for any $m\geq 0$, $\text{card}\{n\in [m,m+2^j-1] : T^nx \in U\} \leq j$, hence
\[
\sup_{m\geq 0}\frac{\text{card}\{n\in [m,m+2^j-1] : T^nx \in U\}}{2^j}\leq \frac{j}{2^j}<\frac{\delta}{2};
\]
since $2^j > N_0$, this contradicts \eqref{eqLP}.
\end{example}

We will now see that for upper frequently recurrent operators the situation is a little different from that for reiterative recurrence found in Theorem \ref{reit-hyp}. We start with a partial analogue.

\begin{theorem}\label{ufreq-hyp}
Let $T\in L(X)$. Then the following assertions are equivalent:
\begin{itemize}
\item[\rm (a)] $T$ is upper frequently hypercyclic;
\item[\rm (b)] $T$ is hypercyclic, and $\text{\emph{UFRec}}(T)$ is a residual set;
\item[\rm (c)] $T$ is hypercyclic, and $\text{\emph{UFRec}}(T)$ is of second category;
\item[\rm (d)] $T$ admits a hypercyclic upper frequently recurrent vector.
\end{itemize}
In that case the set of hypercyclic upper frequently recurrent vectors is residual. Moreover, every hypercyclic upper frequently recurrent vector is upper frequently hypercyclic.
\end{theorem}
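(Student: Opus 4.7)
The plan is to follow a cyclic implication scheme (a) $\Rightarrow$ (b) $\Rightarrow$ (c) $\Rightarrow$ (d) $\Rightarrow$ (a) that parallels Theorem~\ref{reit-hyp}, and to extract the \emph{moreover} statement as a by-product along the way.

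First I would establish the key technical step: every hypercyclic, upper frequently recurrent vector is upper frequently hypercyclic. This mimics the (f) $\Rightarrow$ (a) argument from Theorem~\ref{reit-hyp}. Given such an $x$ and a non-empty open $U\subset X$, use hypercyclicity to pick $m\geq 0$ with $T^mx\in U$, and then by continuity choose a neighbourhood $V$ of $x$ with $T^m(V)\subset U$. Since $x$ is upper frequently recurrent, $N(x,V)$ has positive upper density; translation-invariance of upper density, combined with the inclusion $N(x,V)+m\subset N(x,U)$, then forces $N(x,U)$ to have positive upper density, so $x$ is upper frequently hypercyclic. This single argument yields both (d) $\Rightarrow$ (a) and the \emph{moreover} claim. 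It is worth noting why there is no condition "$T$ hypercyclic and upper frequently recurrent" here (as there was in Theorem~\ref{reit-hyp}(d)): the covering/combinatorial argument that upgrades dense $\text{RRec}$ to $\text{RHC}$ produces a shift $k_n$ with no control on its size, which is harmless for upper Banach density but destroys upper density.

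For the remaining links: (a) $\Rightarrow$ (b) I would obtain by invoking the residuality of $\text{UFHC}(T)$ whenever $T$ is upper frequently hypercyclic (known from \cite{BMPP16} and \cite{BG}); the trivial inclusion $\text{UFHC}(T)\subset\text{UFRec}(T)$ then gives the required residuality of $\text{UFRec}(T)$. The step (b) $\Rightarrow$ (c) is immediate. For (c) $\Rightarrow$ (d) I would use that $T$ hypercyclic on a Fréchet space makes $\text{HC}(T)$ a dense $G_\delta$, so its complement is meager; a second-category $\text{UFRec}(T)$ must then meet $\text{HC}(T)$. Together with the key step, this closes the cycle. The \emph{moreover} clause follows because under (b) both $\text{HC}(T)$ and $\text{UFRec}(T)$ are residual, their intersection is residual, and the key step shows that this intersection lies inside $\text{UFHC}(T)$.

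The main obstacle I expect is the residuality of $\text{UFHC}(T)$ invoked in (a) $\Rightarrow$ (b): upper density is defined through a $\limsup$ rather than a $\liminf$, and encoding the condition \emph{$N(x,U)$ has positive upper density} as a countable intersection of unions of open sets, indexed by a countable base of $X$, requires some care, although it is by now a standard Baire-category argument in the upper-frequent hypercyclicity literature. Everything else is a direct adaptation of the corresponding reiterative argument, with translation-invariance of upper density playing the role that translation-invariance of upper Banach density played in Theorem~\ref{reit-hyp}.
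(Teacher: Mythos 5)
Your proposal is correct and follows essentially the same route as the paper: (a)$\Rightarrow$(b)$\Rightarrow$(c)$\Rightarrow$(d) via the fact that the sets of upper frequently hypercyclic and of hypercyclic vectors are residual (unless empty), and (d)$\Rightarrow$(a) together with the \emph{moreover} clause via the same translation argument as in the (f)$\Rightarrow$(a) step of Theorem~\ref{reit-hyp}, using translation-invariance of upper density. The only cosmetic difference is the citation: the paper attributes the residuality of the set of upper frequently hypercyclic vectors to Bayart--Ruzsa \cite{BaRu15} rather than \cite{BMPP16}.
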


\begin{proof} (a) $\Rightarrow$ (b) $\Rightarrow$ (c) $\Rightarrow$ (d). This follows from the fact that the set of upper frequently hypercyclic vectors is either empty or residual, see \cite{BaRu15}, and that the same is true for the set of hypercyclic vectors.

(d) $\Rightarrow$ (a). Let $x$ be a hypercyclic upper frequently recurrent vector, and let $U$ be a non-empty open set. By hypercyclicity of $x$ there is some $m\geq 0$ such that $T^mx \in U$. The continuity of $T$ implies that there is some neighbourhood $V$ of $x$ such $T^m(V)\subset U$. Since $x$ is upper frequently recurrent we have that the set
\[
N(x,V)=\{ n\geq 0: T^n x\in V \}
\]
has positive upper density, and so has $N(x,V)+m$. But
\[
N(x,V)+m\subset\{ n\geq 0: T^nx\in U \}.
\]
This shows that $x$ is upper frequently hypercyclic. This also proves the additional claim.
\end{proof}

However, the analogue of Theorem \ref{reit-hyp}(e) breaks down for upper frequent recurrence, as the following shows.

\begin{example}
Menet \cite{Men17} has constructed a chaotic operator $T$ on $\ell^1(\N)$ that is not upper frequently hypercyclic. Since periodic points are
upper frequently recurrent, $T$ is hypercyclic and upper frequently recurrent without being upper frequently hypercyclic.

Note also that, by Theorem \ref{ufreq-hyp}(c), $\text{UFRec}(T)$ must be of first category. This is in sharp contrast to the fact that the set of upper frequently hypercyclic vectors is always either empty or residual, see \cite{BaRu15}.
\end{example}

In the case of frequent hypercyclicity we have even fewer equivalent conditions. The proof is identical to that of the corresponding part in Theorems \ref{reit-hyp} or \ref{ufreq-hyp}.

\begin{theorem}\label{freq-hyp}
Let $T\in L(X)$. Then the following assertions are equivalent:
\begin{itemize}
\item[\rm (a)] $T$ is frequently hypercyclic;
\item[\rm (b)] $T$ admits a hypercyclic frequently recurrent vector.
\end{itemize}
Moreover, every hypercyclic frequently recurrent vector is frequently hypercyclic.
\end{theorem}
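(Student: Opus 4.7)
The plan is to reproduce, in the frequent setting, the same scheme used for the analogous equivalences in Theorems \ref{reit-hyp} and \ref{ufreq-hyp}, simplified by the fact that only two conditions are claimed. The implication (a) $\Rightarrow$ (b) is essentially tautological: any frequently hypercyclic vector $x$ is hypercyclic, and since every neighbourhood $U$ of $x$ is in particular a non-empty open set, $N(x,U)$ has positive lower density by the definition of frequent hypercyclicity, which makes $x$ frequently recurrent.

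For (b) $\Rightarrow$ (a) together with the ``Moreover'' claim, I would fix any hypercyclic frequently recurrent vector $x$ and an arbitrary non-empty open set $U\subseteq X$, and show that $N(x,U)$ has positive lower density. By hypercyclicity of $x$ there is some $m\geq 0$ with $T^mx\in U$; continuity of $T^m$ yields a neighbourhood $V$ of $x$ with $T^m(V)\subseteq U$. Frequent recurrence of $x$ gives $\underline{\mbox{dens}}\,(N(x,V))>0$, and the inclusion $N(x,V)+m\subseteq N(x,U)$ together with the obvious fact that shifting a subset of $\mathbb{N}_0$ by a fixed nonnegative integer preserves its lower density forces $\underline{\mbox{dens}}\,(N(x,U))>0$. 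Since $U$ was arbitrary, $x$ is frequently hypercyclic, which simultaneously establishes (a) and the final assertion of the theorem.

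The argument is short precisely because translation invariance of lower density on $\mathbb{N}_0$ does all the work, and no serious obstacle appears. The \emph{reason} no stronger equivalent conditions (such as residuality of $\text{FRec}(T)$, or second category of $\text{FRec}(T)$) are included here, in contrast to Theorems \ref{reit-hyp} and \ref{ufreq-hyp}, is that in the frequent-hypercyclicity setting the set of frequently hypercyclic vectors is not known to be (and in general is not) residual. Consequently the Baire-category manoeuvres that were used to bootstrap from ``second category'' to the existence of a vector with the required combined property are unavailable, and the content of Theorem \ref{freq-hyp} genuinely reduces to the single equivalence proved above.
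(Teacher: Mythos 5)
Your proof is correct and follows essentially the same route as the paper, which simply notes that the argument is identical to the implication (f) $\Rightarrow$ (a) in Theorem \ref{reit-hyp}: use hypercyclicity to find $m$ with $T^mx\in U$, pull back to a neighbourhood $V$ of $x$ with $T^m(V)\subset U$, and use the translation invariance of lower density on $N(x,V)+m\subset N(x,U)$. Nothing further is needed.
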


There is a striking difference in hypercyclicity when passing from lower to upper densities: while the set of frequently hypercyclic vectors is always of first category (\cite{Moo13}, \cite{BaRu15}), the set of upper frequently hypercyclic vectors is residual unless empty (\cite{BaRu15}). We have just seen that we lose the latter property for upper frequent recurrence. For frequent recurrence we collect here some cases where $\text{FRec}(T)$ is of first category.

Recall the following notions. The orbit of a vector $x\in X$ for an operator $T\in L(X)$ is called distributionally near to zero (distributionally unbounded) if there is a set $A\subset\N_0$ with $\overline{\text{dens}}(A)=1$ such that $T^nx\to 0$ as $n\to\infty$, $n\in A$ ($p(T^nx)\to\infty$ as $n\to\infty$, $n\in A$, for some continuous seminorm $p(\cdot)$ on $X$, respectively). These two properties, put together, define the notion of a distributionally irregular vector, see \cite{BBMP}.

\begin{theorem}
Let $T\in L(X)$. Suppose one of the following conditions is satisfied:
\begin{itemize}
\item[\rm (a)] $T$ is hypercyclic;
\item[\rm (b)] $T$ has a distributionally unbounded orbit;
\item[\rm (c)] $T$ has a dense set of vectors whose orbits are distributionally near to zero;
\item[\rm (d)] $T$ has a dense set of vectors $x\in X$ such that $T^nx\to 0$ as $n\to\infty$.
\end{itemize}
Then $\text{\emph{FRec}}(T)$ is of first category.
\end{theorem}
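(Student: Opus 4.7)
The plan for (a) is direct: if $T$ is hypercyclic then $\text{HC}(T)$ is residual, so $X\setminus\text{HC}(T)$ is meager; by the last sentence of Theorem~\ref{freq-hyp}, every vector in $\text{HC}(T)\cap\text{FRec}(T)$ is frequently hypercyclic, so $\text{HC}(T)\cap\text{FRec}(T)\subset\text{FHC}(T)$, which is of first category by Moothathu \cite{Moo13} (see also \cite{BaRu15}, quoted right before the statement). Thus $\text{FRec}(T)\subset (X\setminus\text{HC}(T))\cup\text{FHC}(T)$ is meager.

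For (b), (c) and (d) the strategy factors into two steps. The first is a pointwise ``distribution versus return'' incompatibility: a nonzero vector $x$ whose orbit is either distributionally near to zero or distributionally unbounded is never in $\text{FRec}(T)$. Indeed, suppose $T^n x\to 0$ along some $A\subset\N_0$ with $\overline{\text{dens}}(A)=1$, and choose a neighbourhood $U$ of $x$ whose closure avoids $0$ (possible since $x\neq 0$); then $N(x,U)\cap A$ is finite, so
\[
\underline{\text{dens}}(N(x,U))\le \underline{\text{dens}}(A^c) = 1-\overline{\text{dens}}(A) = 0,
\]
and $x\notin\text{FRec}(T)$. The distributionally unbounded case is symmetric: take $U$ inside a $p$-seminorm ball around $x$ for the seminorm $p$ witnessing the unboundedness, so that $N(x,U)$ meets only the density-zero complement of the set on which $p(T^n x)\to\infty$.

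The second step upgrades the hypothesis on the ``bad'' set $B$ (of vectors with orbit distributionally near to zero, distributionally unbounded, or with $T^n x\to 0$) from ``nonempty/dense'' to residual; once this is done, $\text{FRec}(T)\subset (X\setminus B)\cup\{0\}$ is meager. For (c) and (d) this is immediate from a standard $\liminf$-unpacking: the relevant $B$ can be written as a countable intersection of sets of the form $\{x:|\{n\le N:p_j(T^n x)\ge 1/k\}|<\varepsilon(N+1)\}$, hence is $G_\delta$, and it is dense by hypothesis, so residual. For (b) the same $G_\delta$ description is available, but density from a single distributionally unbounded orbit is the non-trivial point and is supplied by the distributional-chaos results of Bernardes, Bonilla, M\"uller and Peris \cite{BBMP}. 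This ``residuality-from-a-single-orbit'' input in case (b) is the main obstacle; all remaining steps are routine Baire-category manipulations combined with the elementary pointwise observation of the previous paragraph.
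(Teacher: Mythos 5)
Your proposal is correct and follows essentially the same route as the paper: part (a) combines Theorem~\ref{freq-hyp} with the first-category nature of the set of frequently hypercyclic vectors, while (b)--(d) rest on the residuality of the set of vectors with distributionally unbounded orbits (resp.\ orbits distributionally near to zero) from \cite{BBMP}, plus the elementary observation that no such (non-zero) vector can be frequently recurrent. The only difference is cosmetic: for (c) and (d) you re-derive the residuality by a dense-$G_\delta$ argument instead of citing \cite[Proposition 9]{BBMP}, which is exactly the content of that proposition.
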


\begin{proof}
(a) Let $T$ be hypercyclic. If $\text{FRec}(T)$ is of second category then so is the set of hypercyclic frequently recurrent vectors. But by the previous theorem these are exactly the frequently hypercyclic vectors, which is impossible since these vectors form a set of first category.

(b) By \cite[Proposition 7]{BBMP}, the hypothesis implies that there exists a residual subset of vectors in $X$ with distributionally unbounded orbits. But none of these vectors can be frequently recurrent.

(c) We use the same argument as in (b), based now on \cite[Proposition 9]{BBMP}.

(d) This is a special case of (c).
\end{proof}

Of course, the identity operator tells us that $\text{FRec}(T)$ can be all of $X$. So the following question seems natural.

\begin{question}\label{q2}
Do we always have that either $\text{FRec}(T)=X$ or $\text{FRec}(T)$ is a set of first category?
\end{question}

When we now try to look in the same way at uniformly recurrent vectors then we have gone too far: such vectors can never be hypercyclic. This is obvious for Banach space operators. But it is also valid in general, as follows from a classical result of Furstenberg~\cite[Theorem 1.17]{Fur3}: the closure of the orbit of any uniformly recurrent vector is a minimal set (that is, it does not contain any proper closed invariant subset). Thus, no periodic point can be an accumulation point of the orbit of a uniformly recurrent vector. We give here the proof of this conclusion for the sake of completeness; the argument can also be found in the proof of \cite[Proposition 2]{BMPP16}.

\begin{theorem}\label{urv_not_hc}
Let $T\in L(X)$. Then no periodic point of $T$ is an accumulation point of the orbit of a uniformly recurrent vector. In particular, no uniformly recurrent vector for $T$ is hypercyclic.
\end{theorem}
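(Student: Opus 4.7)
The plan is to prove the main assertion by a neighbourhood contraction argument. Suppose for contradiction that $x\in\text{URec}(T)$ and that a periodic point $p$ of $T$, with $T^dp=p$ for some $d\geq 1$, is an accumulation point of the orbit $\{T^n x : n\geq 0\}$. First I would dispose of the trivial case in which $x$ lies in the finite set $\{p,Tp,\ldots,T^{d-1}p\}$: then the orbit of $x$ is itself finite and, being a finite subset of a Hausdorff space, has no accumulation points at all, already contradicting the hypothesis on $p$.

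So assume that $x$ lies outside the finite orbit of $p$. By Hausdorffness, pick a neighbourhood $U$ of $x$ disjoint from $\{p,Tp,\ldots,T^{d-1}p\}$. Continuity of $T^k$ combined with $T^kp\notin U$ then yields a neighbourhood $V$ of $p$ such that $T^k(V)\cap U=\emptyset$ for every $k=0,1,\ldots,d-1$. Consequently, whenever $T^mx\in V$, the entire block $T^mx,T^{m+1}x,\ldots,T^{m+d-1}x$ avoids $U$.

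The key step is to iterate. Since $T^dp=p$, continuity of $T^d$ at $p$ produces, by induction on $L\geq 1$, a decreasing chain of neighbourhoods $W_1\supset W_2\supset\cdots$ of $p$ with $W_1=V$ and $T^d(W_{L+1})\subset W_L$. A routine induction then shows that $T^mx\in W_L$ forces $T^{m+j}x\notin U$ for all $0\leq j\leq Ld-1$. Because $p$ is an accumulation point of the orbit, each $W_L$ catches some iterate $T^{m_L}x$, opening a gap of length at least $Ld$ in $N(x,U)$. Letting $L\to\infty$ contradicts the syndeticity of $N(x,U)$ demanded by uniform recurrence.

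For the ``In particular'' assertion, note that whenever $x$ is hypercyclic the underlying space $X$ must be infinite-dimensional, so every neighbourhood of $0$ meets the dense orbit $\{T^nx:n\geq 0\}$ infinitely often; hence $0$ is an accumulation point of that orbit. Since $0$ is a fixed (and therefore periodic) point of the linear operator $T$, the main part rules out $x\in\text{URec}(T)$. The only delicate point is the inductive construction of the nested $W_L$: that is precisely where one exploits that $p$ is a fixed point of the iterate $T^d$, turning the recurrence of the orbit near $p$ into arbitrarily long excursions of the orbit away from~$x$.
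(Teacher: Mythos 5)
Your argument is correct and reaches the same contradiction as the paper---a long block of consecutive non-return times against the syndeticity of $N(x,U)$---but by a somewhat different mechanism. The paper first fixes the maximal gap $m$ of $N(x,U)$, separates $x$ from the finite orbit of the periodic point by \emph{disjoint open sets} $U$ and $V$, and then applies continuity once to get a neighbourhood $W$ of the periodic point with $T^j(W)\subset V$ for $j=0,\ldots,m$; a single visit of the orbit to $W$ then produces $m+1$ consecutive misses of $U$, an immediate contradiction. You instead exploit the relation $T^dp=p$ to build a nested chain $W_1\supset W_2\supset\cdots$ with $T^d(W_{L+1})\subset W_L$, so that a visit to $W_L$ yields a gap of length at least $Ld$, and you let $L\to\infty$. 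Your route needs an induction and uses the periodic structure more heavily (the paper's argument only uses that the orbit of the periodic point is finite, so it even covers eventually periodic points), while the paper's is shorter because it spends the syndeticity constant up front; both are sound, and your treatment of the ``in particular'' part via the fixed point $0$ being an accumulation point of a dense orbit is the same as the paper's.

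One step should be stated more carefully: from $T^kp\notin U$ and continuity alone you cannot conclude that some neighbourhood $V$ of $p$ satisfies $T^k(V)\cap U=\emptyset$; if $T^kp$ happened to lie on the boundary of $U$, every neighbourhood of $p$ would be mapped onto a set meeting $U$. What you need is $T^kp\notin\overline{U}$ for $k=0,\ldots,d-1$, which you can arrange either by shrinking $U$ (the orbit of $p$ is finite and $X$ is metrizable, so take a small ball), or, as the paper does, by choosing disjoint open sets $U\ni x$ and $V_0\supset\{p,Tp,\ldots,T^{d-1}p\}$ and then demanding $T^k(V)\subset V_0$. With that minor repair your proof is complete.
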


\begin{proof} Suppose on the contrary that a periodic point $y$ is an accumulation point of the orbit of a uniformly recurrent vector $x$. Then $x$ cannot belong to the (finite) orbit of $y$ under $T$, so that there are disjoint open sets $U$ and $V$ containing $x$ and the orbit of $y$, respectively. Let $m$ be the maximum gap in the return set $N(x,U)$. Then there is a neighbourhood $W$ of $y$ such that $T^j(W)\subset V$ for $j=0,\ldots,m$. By assumption there is some $n\geq 0$ such that $T^nx\in W$. But then $T^{k}x$ belongs to $V$ and therefore not to $U$ for the $m+1$ exponents $k=n,\ldots,n+m$, which is a contradiction.

Since $0$ is a periodic point of every operator, the final conclusion follows.
\end{proof}

Theorem~\ref{urv_not_hc} only leaves the possibility to study hypercyclic operators that \textit{also} have a dense set of uniformly recurrent vectors (or, for that matter, upper frequently recurrent vectors, frequently recurrent vectors). We will not pursue this here.

Let us ask again the following.

\begin{question}\label{q3}
Do we always have that either $\text{URec}(T)=X$ or $\text{URec}(T)$ is a set of first category?
\end{question}

We will get a partial positive answer in Section \ref{s-recpower}. Note that, for periodic points, the corresponding property holds. It is a simple consequence of Baire's theorem that either $\text{Per}(T)$ is of first category or else $T^n=I$ for some $n\geq 1$ (and hence $\text{Per}(T)=X$).

Our next result was motivated by \cite[Corollary 5.20]{GriMaMe}. The authors there show that if an operator $T$ is uniformly recurrent, and if there is a dense of vectors $x\in X$ such that $T^nx\to 0$ as $n\to\infty$, then $T$ is upper frequently hypercyclic. They call this result \textit{somewhat unexpected}. We can give here a more natural (and improved) version of their finding.

\begin{theorem}\label{t-zero_orbits}
Let $T\in L(X)$. Suppose that there is a dense set of vectors $x\in X$ such that $T^nx\to 0$ as $n\to\infty$. Then we have the following:
\begin{itemize}
\item[\emph{(a)}] if $T$ is recurrent then it is hypercyclic;
\item[\emph{(b)}] if $T$ is reiteratively recurrent then it is reiteratively hypercyclic;
\item[\emph{(c)}] if $T$ is upper frequently recurrent then it is upper frequently hypercyclic.
\end{itemize}
\end{theorem}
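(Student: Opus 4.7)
The plan is to prove (a) via topological transitivity, reduce (b) to (a) using Theorem~\ref{reit-hyp}, and for (c) construct an upper frequently hypercyclic vector directly by a Baire category argument.

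For (a), I would verify topological transitivity. Given non-empty open $U, V \subset X$, density of $\text{Rec}(T)$ yields $v \in V \cap \text{Rec}(T)$, and density of the zero-orbit set $Z := \{w \in X : T^n w \to 0\}$ yields $z \in Z \cap (U - v)$; setting $u := v + z \in U$ and taking a return sequence $(n_k)$ for $v$, one has $T^{n_k}u = T^{n_k}v + T^{n_k}z \to v \in V$, so $T^{n_k}u \in V$ for large $k$. Part (b) then follows: $\text{RRec}(T) \subset \text{Rec}(T)$ and (a) make $T$ hypercyclic, and Theorem~\ref{reit-hyp}, (d)$\Rightarrow$(a), yields reiterative hypercyclicity.

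For (c) the analogous one-line reduction is not available, because Theorem~\ref{ufreq-hyp}\,(d) requires a \emph{single} hypercyclic upper frequently recurrent vector, and Menet's example above shows that this cannot follow from the two properties being held separately. I would instead proceed by Baire. Fix a countable base $(V_k)$ of non-empty open sets; for each $k$ pick $v_k \in V_k \cap \text{UFRec}(T)$ and a smaller open neighbourhood $V'_k$ of $v_k$ with $\overline{V'_k}\subset V_k$ and $2\delta_k := \overline{\text{dens}}\,(N(v_k, V'_k)) > 0$. Repeating the construction of (a) with $V$ replaced by $V'_k$, for every non-empty open $U$ I obtain $u = v_k + z \in U$ (with $z \in Z \cap (U - v_k)$) such that $N(u, V_k) \supset N(v_k, V'_k) \cap [N_0, \infty)$ for some $N_0 = N_0(z)$, and hence $\overline{\text{dens}}\,(N(u, V_k)) \geq 2\delta_k$. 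Therefore
\[
A_k := \bigl\{ x \in X : \overline{\text{dens}}\,(N(x, V_k)) \geq \delta_k \bigr\}
\]
is dense, and it is $G_\delta$ because ``$\overline{\text{dens}}\, \geq \delta_k$'' unpacks as a countable intersection (over rational $\varepsilon>0$ and integer $M$) of open finite-time density conditions of the form $\bigcup_{N>M}\{x : |N(x,V_k) \cap [0,N]|/(N+1) > \delta_k - \varepsilon\}$. By Baire, $\bigcap_k A_k$ is residual and hence non-empty; any vector in it is upper frequently hypercyclic since $(V_k)$ is a base.

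The main obstacle is the uniformity of the threshold $\delta_k$ in (c): it must be chosen once per $V_k$, \emph{independently of the auxiliary open set} $U$, so that the perturbation $z \in Z$ — which only discards a co-finite subset of return times and thus preserves upper density — cannot drop $\overline{\text{dens}}\,(N(u, V_k))$ below $\delta_k$. The $G_\delta$ structure of $A_k$ is the standard unpacking of the limsup defining upper density.
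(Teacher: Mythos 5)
Your proposal is correct, and for parts (b) and (c) it takes a genuinely different route from the paper. The paper proves (c) by verifying the hypothesis of the criterion in \cite[Corollary 3.4]{BG}: for every non-empty open $V$ one fixes an upper frequently recurrent vector $v\in V$, a neighbourhood $V_0$ of $v$ and a $0$-neighbourhood $W$ with $V_0+W\subset V$, puts $\delta<\overline{\text{dens}}\,(N(v,V_0))$, and then shows that every non-empty open $U$ contains $x=v+y$ (with $y\in U-v$, $T^ny\to 0$) whose return set to $V$ has upper density $>\delta$; parts (a) and (b) are dispatched as ``similar'' and are subsumed by the general Theorem \ref{t-zero_orbits2} on u.f.i.\ upper Furstenberg families. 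Your treatment of (c) is in effect a self-contained reproof of the special case of that criterion that is needed here: the key point — that the threshold $\delta_k$ is chosen from the recurrent vector $v_k$ alone, uniformly in the auxiliary open set $U$ — is exactly the paper's key point, and your Baire argument with the sets $A_k$ (which are indeed $G_\delta$, since the finite-time counting functions $x\mapsto\text{card}\{n\le N: T^nx\in V_k\}$ are lower semicontinuous) is what underlies the quoted criterion. What you gain is independence from the citation; what you lose is the uniform treatment of (a), (b), (c) and of general upper families that the paper gets in Section \ref{s-frec}. Your reduction of (b) to (a) together with Theorem \ref{reit-hyp}, (d)$\Rightarrow$(a), is legitimate and in fact shorter than repeating the perturbation argument for upper Banach density, and your transitivity proof of (a) is the argument the paper intends.

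One small repair in (c): the condition $\overline{V'_k}\subset V_k$ does not by itself justify the inclusion $N(u,V_k)\supset N(v_k,V'_k)\cap[N_0,\infty)$. For that step you need a \emph{single} $0$-neighbourhood $W$ with $V'_k+W\subset V_k$, so that $T^nv_k\in V'_k$ together with $T^nz\in W$ (for $n$ large) forces $T^nu\in V_k$; in infinite dimensions $\overline{V'_k}\subset V_k$ does not guarantee such a uniform margin. The fix is immediate: choose $V'_k$ and $W$ by continuity of addition (as the paper does with $V_0+W\subset V$), for instance taking $V'_k$ a ball about $v_k$ of half the radius of a ball contained in $V_k$. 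With this adjustment the argument is complete.
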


\begin{proof}
For upper frequent hypercyclicity, it suffices by \cite[Corollary 3.4]{BG} to show that, for any non-empty open set $V$ in $X$ there is some $\delta>0$ such that, for any non-empty open set $U$ in $X$, there is some $x\in U$ such that
\begin{equation}\label{dens}
\overline{\text{dens}}\{ n\geq 0 : T^nx \in V\} >\delta.
\end{equation}

Such a set $V$ contains an upper frequently recurrent vector $v$. Choose open neighbourhood $V_0$ of $v$ and $W$ of zero such that $V_0+W\subset V$. Then the set
\[
A:=\{ n\geq 0 : T^nv \in V_0\}
\]
has positive upper density. Choose $0<\delta<\overline{\text{dens}}(A)$. Now let $U$ be a non-empty open set. By hypothesis there is some $y\in U-v$ such that $T^ny\to 0$ as $n\to\infty$. Then the vector $x:=y+v$ belongs to $U$, and we have that
\[
T^nx = T^ny + T^n v \in W+V_0 \subset V
\]
whenever $n\in A$ is sufficiently large, which implies \eqref{dens}.

The proof for reiterative recurrence and recurrence is similar; see also Theorem \ref{t-zero_orbits2} below.
\end{proof}

The proof, however, breaks down for frequent hypercyclicity.

\begin{question}\label{q4}
Let $T$ be a frequently recurrent operator (or even a chaotic operator) such that $T^nx\to 0$ as $n\to\infty$ for all $x$ from a dense subset of $X$. Does it follow that $T$ is frequently hypercyclic? It seems to be even open whether every chaotic operator with a dense generalized kernel (that is, $\overline{\bigcup_{n\geq 0} \text{ker}(T^n)}=X$) is frequently hypercyclic.
\end{question}

We finish the section with one more natural question. Let $T$ be an invertible operator. Does then a given dynamical property pass from $T$ to its inverse? This is so for hypercyclicity, as is well known, as well as for reiterative hypercyclicity (see \cite {BG}) and recurrence (\cite{CMP}). On the other hand, Menet \cite{Men19a}, \cite{Men19b} has recently shown that the corresponding results are false for (upper) frequent hypercyclicity.

\begin{question}\label{q5}
Let $T$ be an invertible operator. If $T$ is reiteratively recurrent (upper frequently recurrent, frequently recurrent, uniformly recurrent), does $T^{-1}$ have the same property?
\end{question}

\section{Recurrence and power boundedness}\label{s-recpower}

Not surprisingly, power boundedness influences strongly the dynamical properties of an operator. An operator $T\in L(X)$ is called power bounded if the sequence $(T^n)_{n\geq 0}$ is equicontinuous, that is, if for any $0$-neighbourhood $W_1$ there is a $0$-neighbourhood $W_2$ such that, for any $n\geq 0$,
\[
T^n(W_2)\subset W_1;
\]
by the Banach-Steinhaus theorem, this is equivalent to saying that every orbit under $T$ is bounded, see \cite{Rud74}.

The following is then obvious; see also \cite[Lemma 3.1]{CMP}.

\begin{theorem}\label{t-power}
Let $T\in L(X)$. If $T$ is power bounded, then the sets $\text{\emph{URec}}(T)$, $\text{\emph{FRec}}(T)$, $\text{\emph{UFRec}}(T)$, $\text{\emph{RRec}}(T)$ and $\text{\emph{Rec}}(T)$ are closed.
\end{theorem}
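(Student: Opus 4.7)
The plan is to show all five sets are closed by a single uniform argument that exploits equicontinuity of the family $(T^n)_{n\geq 0}$. Let $\mathcal{R}$ denote any of the five sets; I will show that the limit of any convergent sequence $(x_k)$ in $\mathcal{R}$ belongs to $\mathcal{R}$. The core observation is that each of the five notions is defined via the return set $N(x,U)$ having some monotone property: being infinite, being syndetic, or having positive lower/upper/upper Banach density. All of these properties are preserved under taking supersets. Hence it suffices to produce, for every neighbourhood $U$ of the limit $x$, a vector $x_k$ close to $x$ and a neighbourhood $V$ of $x_k$ such that $N(x_k,V)\subset N(x,U)$.

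First I would fix $x\in\overline{\mathcal{R}}$ and a neighbourhood $U$ of $x$. Using that $X$ is a Fréchet space (so a base of balanced $0$-neighbourhoods is available), I would pick a balanced $0$-neighbourhood $W_1$ with $x+W_1+W_1+W_1\subset U$. Now power boundedness comes in: by equicontinuity of $(T^n)_{n\geq 0}$ there exists a $0$-neighbourhood $W_2\subset W_1$ such that $T^n(W_2)\subset W_1$ for every $n\geq 0$. Next I would choose $k$ large enough so that $x_k-x\in W_2$, and set $V=x_k+W_1$, which is a neighbourhood of $x_k$.

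The key computation is then: for every $n\in N(x_k,V)$,
\[
T^nx = x_k + (T^nx_k - x_k) + T^n(x - x_k) \in x_k + W_1 + W_1 \subset x + W_2 + W_1 + W_1 \subset U,
\]
using $T^nx_k-x_k\in W_1$ (since $T^nx_k\in V$), $T^n(x-x_k)\in W_1$ (since $x-x_k\in W_2$), and $x_k-x\in W_2\subset W_1$. Therefore $N(x_k,V)\subset N(x,U)$.

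To finish, I would invoke the monotonicity of the five density notions under supersets: if $A\subset B\subset\N_0$, then $\underline{\mathrm{dens}}(B)\geq\underline{\mathrm{dens}}(A)$, $\overline{\mathrm{dens}}(B)\geq\overline{\mathrm{dens}}(A)$, $\overline{\mathrm{Bd}}(B)\geq\overline{\mathrm{Bd}}(A)$; and if $A$ is syndetic (or infinite), so is $B$. Applying this with $A=N(x_k,V)$ and $B=N(x,U)$ gives that $N(x,U)$ enjoys the same property, so $x\in\mathcal{R}$. I expect no serious obstacle here: the only delicate point is the neighbourhood bookkeeping that leverages equicontinuity to absorb the error $T^n(x-x_k)$ uniformly in $n$, which is exactly what power boundedness provides.
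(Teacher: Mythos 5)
Your proposal is correct and is essentially the paper's own proof: the same decomposition $T^nx-x = T^n(x-x_k) + (T^nx_k-x_k) + (x_k-x)$, absorbed into $W_1+W_1+W_1$ via the equicontinuity neighbourhood $W_2$, with the paper writing it out for $\text{URec}(T)$ and noting the other four cases follow by the same superset-monotonicity you describe. The only cosmetic point is that you use both $x_k-x\in W_2$ and $x-x_k\in W_2$, so either take $W_2$ balanced or require both memberships explicitly, as the paper does.
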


\begin{proof} We only consider uniform recurrence. Let $x\in \overline{\text{URec}(T)}$ and $W$ be a $0$-neighbourhood. Choose a $0$-neighbourhood $W_1$ such that $W_1+W_1+W_1\subset W$. By power boundedness, there is a $0$-neighbourhood $W_2\subset W_1$ such that $T^n(W_2)\subset W_1$ for all $n\geq 1$.

Now, by assumption, there is some $y\in \text{URec}(T)$ such that $x-y, y-x\in W_2$. In addition, the set $A:=\{n\geq 0: T^ny-y\in W_1\}$ is syndetic. But then we have for $n\in A$ that
\[
T^n x - x = T^n (x-y) +T^ny-y + y - x  \in W_1 +W_1 +W_1\subset W.
\]
Since $W$ is arbitrary, $x$ is uniformly recurrent.
\end{proof}

This shows that, for every power bounded operator, recurrence of the operator implies that every vector is recurrent; and similarly for the other notions of recurrence.

On the other hand, for an operator acting on a Banach space, every uniformly recurrent vector has a bounded orbit. Thus we immediately obtain
the following partial answer to Question \ref{q3}.

\begin{corollary}\label{c-powerUR}
Let $X$ be a Banach space, and $T\in L(X)$ be a uniformly recurrent operator. Then either $\text{\emph{URec}}(T)$ is of first category, or $\text{\emph{URec}}(T)=X$.
\end{corollary}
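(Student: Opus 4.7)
The plan is to combine Theorem~\ref{t-power} with a Banach--Steinhaus-type argument, exploiting the fact that in a Banach space every uniformly recurrent vector has a bounded orbit.

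First I would verify the boundedness claim. Let $x\in\text{URec}(T)$ and apply the definition to the neighbourhood $U=B(x,1)$: the return set $N(x,U)$ is syndetic, so there is some $m\ge 1$ such that every integer $n\ge 0$ can be written as $n=n'+k$ with $n'\in N(x,U)$ and $0\le k\le m$. Then $T^nx=T^k(T^{n'}x)$ with $\|T^{n'}x\|\le\|x\|+1$, so
\[
\sup_{n\ge 0}\|T^nx\|\le (\|x\|+1)\max_{0\le k\le m}\|T^k\|<\infty.
\]
Hence $\text{URec}(T)$ is contained in the vector subspace
\[
E=\bigl\{x\in X:\sup_{n\ge 0}\|T^nx\|<\infty\bigr\}=\bigcup_{M\ge 1}A_M,\qquad A_M=\bigcap_{n\ge 0}\{x\in X:\|T^nx\|\le M\},
\]
each $A_M$ being closed.

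Now suppose $\text{URec}(T)$ is not of first category. Then $E$ is of second category, so by the Baire category theorem some $A_M$ has non-empty interior. Since $A_M$ is convex and symmetric in the usual balancing sense (writing any $z$ with $\|z\|<\varepsilon$ as the difference $(x_0+z)-x_0$ of two points of $A_M$ yields $\|T^nz\|\le 2M$ for all $n$), we conclude that $T$ is power bounded. The main obstacle is this step, but it is exactly the standard Banach--Steinhaus deduction: an interior point of $A_M$ produces a uniform norm bound on $(T^n)_{n\ge 0}$.

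Once $T$ is power bounded, Theorem~\ref{t-power} tells us that $\text{URec}(T)$ is closed; and since $T$ is uniformly recurrent, $\text{URec}(T)$ is dense. A closed dense set in $X$ is all of $X$, so $\text{URec}(T)=X$, which finishes the dichotomy.
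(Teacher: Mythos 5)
Your proposal is correct and follows essentially the same route as the paper: uniformly recurrent vectors in a Banach space have bounded orbits, so a second-category $\text{URec}(T)$ forces power boundedness via the Banach--Steinhaus (uniform boundedness) argument, and then Theorem~\ref{t-power} plus density gives $\text{URec}(T)=X$. You merely spell out the two ingredients the paper cites without proof (the syndeticity argument for bounded orbits and the Baire/interior-point deduction), and both details are carried out correctly.
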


\begin{proof}
Suppose that the set $\text{URec}(T)$ is of second category. Then so is the set of vectors with bounded orbit under $T$, which by the Banach-Steinhaus theorem implies that $T$ is power bounded, see \cite{Rud74}. By the previous theorem, $\text{URec}(T)$ is then closed, and also dense by hypothesis, so $\text{URec}(T)=X$.
\end{proof}

Now, for Fr\'echet space operators, a uniformly recurrent orbit is not necessarily bounded, so that one cannot argue as in the proof of Corollary \ref{c-powerUR}. An example is given by the backward shift on the space $\mathbb{K}^\mathbb{N}$ of all (real or complex) sequences, see \cite[Example~1]{GMOP}. We give here an example on a Fr\'echet space with a continuous norm. The type of operator considered in this example might also be of independent interest.

\begin{example}\label{ex_urec_unbounded}
Let $X$ be the space of doubly indexed sequences $x=(x_{k,j})_{k\geq 0, 0\leq j < 2^{k}}$ such that, for any $n\geq 1$,
\[
p_n(x):= \sum_{k=0}^\infty \frac{1}{2^k} \max_{0\leq j < 2^k} |x_{k,j}| + \sum_{k=2}^\infty k \max_{\substack{1\leq m \leq n\\m<2^{k-1}}} |x_{k,2^{k-1}+m}|<\infty.
\]
Figure \ref{f-indices} indicates the area of indices that is involved in the second sum. When endowed with the increasing sequence of (semi)norms $p_n$, $n\geq 1$, $X$ obviously becomes a separable Fr\'echet space.

\begin{figure}
\caption{Indices for the seminorms $p_n$}\label{f-indices}%
\centering
\begin{tikzpicture}
\draw[->] (0,0)- -(5,0) node[right] {$j$};
\draw[->] (0,0)- -(0,-5) node[below] {$k$};
\draw[domain=0:5] plot (\x,{-1.5*ln (\x+1)});
\draw[domain=0:5] plot (\x,{-3*ln (\x+1)});
\draw[domain=0.1916:5] plot (\x,{-3*ln (\x+.9)});
\draw[domain=0.866:5] plot (\x,{-3*ln (\x+.5)});
\fill[lightgray]  plot[domain=5:0.1916] (\x,{-3*ln (\x+.9)})-- plot[domain=0.1916:0.8686] (\x,{-1.5*ln (\x+1)})-- plot[domain=0.866:5] (\x,{-3*ln (\x+.5)});
\node[above right] at (5,-3) {\small $2^k$};
\node[above right] at (2.6,-5) {\small $2^{k-1}$};
\end{tikzpicture}
\end{figure}

We consider the operator $T$ on $X$ given by
\[
T(x_{k,j})_{k,j} = (x_{k, j+1 (\text{mod } 2^k)})_{k,j},
\]
that is, a simple row-wise rotation. To see that $T$ is continuous, fix $n\geq 1$. Choose $l\geq 2$ so that $2^l\geq 2(n+2)$, which implies that $n +1 < 2^{k-1}$ for all $k\geq l$. Then we have for $x\in X$ that
\begin{align*}
p_n(Tx) &= \sum_{k=0}^\infty \frac{1}{2^k} \max_{0\leq j < 2^k} |[Tx]_{k,j}| + \sum_{k=2}^\infty k \max_{\substack{1\leq m \leq n\\m<2^{k-1}}} |[Tx]_{k,2^{k-1}+m}|\\
&=\sum_{k=0}^\infty \frac{1}{2^k} \max_{0\leq j < 2^k} |x_{k,j}| + \sum_{k=2}^{l-1} k \max_{\substack{1\leq m \leq n\\m<2^{k-1}}} |[Tx]_{k,2^{k-1}+m}|+\sum_{k=l}^\infty k \max_{1\leq m \leq n} |x_{k,2^{k-1}+m+1}|\\
&\leq \sum_{k=0}^\infty \frac{1}{2^k} \max_{0\leq j < 2^k} |x_{k,j}| + (l-1)2^{l-1}\sum_{k=2}^{l-1} \frac{1}{2^k} \max_{0\leq j < 2^k} |x_{k,j}|+\sum_{k=l}^\infty k \max_{1\leq m \leq n+1} |x_{k,2^{k-1}+m}|\\
&\leq (1+(l-1)2^{l-1})p_{n+1}(x),
\end{align*}
which proves continuity.

Now, consider the vector $x=(x_{j,k})\in X$ given by $x_{k,0}=1$ for $k\geq 0$ and all other $x_{k,j}=0$. Then $x$ is uniformly recurrent for $T$. Indeed, let $n\geq 1$ and $\varepsilon>0$. Choose $l\geq 0$ such that $2^l>\max(n,1/\varepsilon)$. Let $\nu\geq 0$. First we observe that
\[
[T^{\nu 2^l}x]_{k,j} = x_{k,j},\quad k\leq l,  0\leq j < 2^k.
\]
On the other hand, for $k>l$, the fact that $x_{k,j}=0$ for all $j\neq 0$ implies that $[T^{\nu 2^l}x]_{k,j} = 0$ whenever $j$ is not a multiple of $2^{l}$. Now since, for these $k$, $2^{k-1}$ is a multiple of $2^l$ and $n<2^l$, we have that
\[
[T^{\nu 2^l}x]_{k,2^{k-1}+m} = 0,\quad k > l,  1\leq m\leq n;
\]
note that $m<2^{k-1}$ is automatic. Thus we have for any $\nu\geq 0$
\[
p_n(T^{\nu l}x-x) = \sum_{k>l} \frac{1}{2^k}=\frac{1}{2^l}<\varepsilon.
\]
This shows that $x$ is uniformly recurrent.

On the other hand, by construction, the orbit of $x$ is unbounded. It suffices to observe that for $k\geq 2$
\[
[T^{2^{k-1}+1}x]_{k,2^{k-1}+1}=1,
\]
so that $p_1(T^{2^{k-1}+1}x)\geq k$.
\end{example}

The vector $x$ considered above is not periodic, but it enjoys the following property: for any neighbourhood $U$ of $x$ there is some $k\geq 1$ such that $T^{nk}x\in U$ for all $n\geq 0$. Such points have been called regularly recurrent (or regularly almost periodic) in non-linear dynamics, see \cite{GoHe}, \cite{BK04}.

The examples show that, for Fr\'echet spaces, the proof for Corollary \ref{c-powerUR} breaks down at a very early stage. One may wonder what kind of (weak) boundedness the orbit of a uniformly recurrent vector possesses in the setting of Fr\'echet spaces.

On the other hand, for power bounded operators we have a strong form of boundedness.

\begin{theorem}\label{t-power2}
Let $T\in L(X)$ be power bounded. If $x$ is a uniformly recurrent vector for $T$ then the closure of its orbit is compact.
\end{theorem}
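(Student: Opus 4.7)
The plan is to show that the orbit $\{T^n x : n \geq 0\}$ is totally bounded in the Fréchet space $X$; since $X$ is complete, this will imply that the closure of the orbit is compact. So the goal reduces to showing that, for every $0$-neighbourhood $W$, there is a finite subset $F \subset X$ with $\{T^n x : n \geq 0\} \subset F + W$.

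First, fix an arbitrary $0$-neighbourhood $W$. By power boundedness of $T$, choose a $0$-neighbourhood $W_2$ such that $T^n(W_2) \subset W$ for every $n \geq 0$. Then $x + W_2$ is a neighbourhood of $x$, and by uniform recurrence the return set
\[
A := N(x, x+W_2) = \{n \geq 0 : T^n x - x \in W_2\}
\]
is syndetic; pick $M \geq 0$ such that every interval in $\mathbb{N}_0$ of length $M+1$ meets $A$. Note that $0 \in A$.

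Next, for each $n \geq 0$, I would decompose $n = a + k$ with $a \in A$ and $0 \leq k \leq M$: such a decomposition exists because the interval $[\max(0,n-M), n]$ contains some $a \in A$, and we set $k = n - a$. Writing
\[
T^n x = T^k\bigl(x + (T^a x - x)\bigr) = T^k x + T^k(T^a x - x),
\]
and observing that $T^a x - x \in W_2$ gives $T^k(T^a x - x) \in W$ by the choice of $W_2$, we obtain
\[
T^n x \in \{T^0 x, T^1 x, \ldots, T^M x\} + W.
\]
Since $W$ was arbitrary and the set $F = \{T^k x : 0 \leq k \leq M\}$ is finite, the orbit of $x$ is totally bounded, and hence has compact closure in the complete metric space $X$.

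There is no real obstacle here: the argument is essentially the same bootstrapping as in the proof of Theorem~\ref{t-power}, with the syndetic bound $M$ playing the role of a uniform modulus that converts power boundedness into a finite covering of the orbit. The only point where one must be a little careful is ensuring that both $a \geq 0$ and $k \in \{0, \ldots, M\}$ in the decomposition $n = a + k$, which is why it matters that $0 \in A$ (so the decomposition works even for small $n$).
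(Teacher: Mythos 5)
Your proof is correct and is essentially the paper's own argument: both reduce to total boundedness, use power boundedness to get a neighbourhood $W_2$ with $T^n(W_2)\subset W$ for all $n$, and use syndeticity of $N(x,x+W_2)$ to cover the orbit by the finitely many sets $T^kx+W$, $0\le k\le M$. Your explicit decomposition $n=a+k$ (with the remark that $0\in A$ handles small $n$) just spells out the covering the paper writes as $\{T^kx:k\ge 0\}\subset\bigcup_{n=0}^N T^n(x+W_0)$.
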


\begin{proof}
We show, equivalently, that the orbit of $x$ is totally bounded, that is, for any 0-neighbourhood $W$ there are finitely many points $x_0,\ldots,x_N$ such that the orbit is contained in $\bigcup_{n=0}^N (x_n+W)$. Thus let $W$ be a 0-neighbourhood. By power boundedness there is a 0-neighbourhood $W_0$ such that $T^n(W_0)\subset W$ for all $n\geq 0$. Let $N$ be the maximum gap in the return set $N(x,x+W_0)$. Then we have that
\[
\{T^k x : k\geq 0\}\subset \bigcup_{n=0}^N T^n(x+W_0)\subset \bigcup_{n=0}^N (T^nx+W),
\]
which implies the claim.
\end{proof}

We already recalled Furstenberg's result which says that the closure of the orbit of a uniformly recurrent vector is a minimal set. The dynamics on minimal compact sets (like irrational rotations on the torus) is a matter of study in non-linear dynamics.

\section{Recurrence, the unit circle, and the spectrum}\label{s-recspec}

In this section we study some properties of recurrence whose hypercyclic analogues belong to the fundamental results in linear dynamics. Costakis et al.~\cite{CMP} have obtained the following: for any recurrent operator $T$,
\begin{itemize}
\item[--] $T^p$ is recurrent for any $p\geq 1$; in fact, $\text{Rec}(T^p)=\text{Rec}(T)$;
\item[--] $\lambda T$ is recurrent whenever $|\lambda|=1$; in fact, $\text{Rec}(\lambda T)=\text{Rec}(T)$;
\end{itemize}
moreover, if $X$ is a complex Banach space, then
\begin{itemize}
\item[--] every component of the spectrum $\sigma(T)$ meets the unit circle;
\item[--] the point spectrum $\sigma_p(T^*)$ of its adjoint $T^*$ is contained in the unit circle.
\end{itemize}

We start by looking at the first two properties for other notions of recurrence. Our approach uses in a crucial way an idea of Bayart and Matheron \cite[Section 6.3.3]{BaMa09}. Let us say that a family $\mathcal{F}$ of subsets of $\N_0$ has the \textit{\underline{cu}t-\underline{s}hift-and-\underline{p}aste} property, CuSP for short, if for any $I_1,\ldots,I_q\subset \N_0$ with $\N_0=\bigcup_{j=1}^q I_j$ and $n_1,\ldots,n_q\in\mathbb{N}_0$ ($q\geq 1$),
\[
A\in \mathcal{F}\ \Rightarrow \ \bigcup_{j=1}^q (n_j + A\cap I_j) \in \mathcal{F}.
\]
Then \cite[Lemma 6.29]{BaMa09} says that the family of sets of positive lower density has CuSP.

\begin{lemma}\label{l-chip}
The following families have CuSP: The syndetic subsets and the infinite subsets of $\N_0$, and the sets of positive lower density, positive upper density, and positive upper Banach density.
\end{lemma}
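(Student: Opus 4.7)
The plan is to handle all five families by a common pigeonhole argument; the case of positive lower density is already contained in \cite[Lemma~6.29]{BaMa09}, so only four cases remain. The starting point is the trivial identity $A=\bigcup_{j=1}^q (A\cap I_j)$, which by pigeonhole forces at least one of the pieces $A\cap I_{j^*}$ to inherit a positive share of whatever ``largeness'' $A$ enjoys. Since a nonnegative shift $C\mapsto n_{j^*}+C$ alters the counting function of $C$ by at most $n_{j^*}$, it preserves each of the four relevant notions, and the inclusion $n_{j^*}+(A\cap I_{j^*})\subseteq B:=\bigcup_{j=1}^q (n_j+A\cap I_j)$ then already witnesses the desired property for $B$.

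For infinite sets this is immediate. For positive upper density I pick $N_k\to\infty$ with $|A\cap[0,N_k]|/(N_k+1)>\alpha>0$; by pigeonhole on $\{1,\ldots,q\}$ some $j^*$ is chosen infinitely often, so along a subsequence $|A\cap I_{j^*}\cap[0,N_k]|\geq \alpha(N_k+1)/q$, yielding $\overline{\mbox{dens}}(n_{j^*}+(A\cap I_{j^*}))\geq \alpha/q>0$. For positive upper Banach density the same argument applies, with $[0,N_k]$ replaced by windows $[m_k,m_k+N_k]$ on which $|A\cap[m_k,m_k+N_k]|/(N_k+1)$ tends to $\overline{\mbox{Bd}}(A)>0$; the shifted windows $[m_k+n_{j^*},m_k+n_{j^*}+N_k]$ then witness $\overline{\mbox{Bd}}(B)\geq \overline{\mbox{Bd}}(A)/q$. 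For syndetic sets, let $M$ bound the gaps of $A$ and set $N_{\max}=\max_j n_j$; in any window $[m,m+M+2N_{\max}]$ with $m$ large I find $a\in A\cap[m+N_{\max},m+N_{\max}+M]$, and for any $j$ with $a\in I_j$ the element $a+n_j\in B$ lies in $[m,m+M+2N_{\max}]$, so $B$ has eventually bounded gaps and is thus syndetic.

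The one place requiring some care is the Banach density step, where one must extract the pigeonhole winner $j^*$ along a single infinite subsequence of pairs $(m_k,N_k)$ so that it survives in the limit defining $\overline{\mbox{Bd}}(B)$. Everything else is essentially mechanical once the correct ``large-window witness'' has been fixed for each notion of density, and the remaining check for syndeticity only uses that finitely many initial indices do not spoil the bounded-gap conclusion.
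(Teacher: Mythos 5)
Your argument is correct and takes essentially the same route as the paper's own proof: a pigeonhole gives one piece $A\cap I_j$ a $1/q$-share of the largeness of $A$ in each witnessing window, shifts by at most $\max_j n_j$ are harmless, and the syndetic case is settled by locating a point of $A$ in a suitable subinterval of a window of length comparable to the gap bound plus $\max_j n_j$. The only inessential difference is in the upper Banach density case, where you extract a single winning index $j^*$ along a subsequence, whereas the paper lets the winner vary with the window and simply enlarges the window by $M=\max(n_1,\ldots,n_q)$, which makes the extraction unnecessary.
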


\begin{proof}
The case of positive lower density is proved in \cite[Lemma 6.29]{BaMa09}; the same proof also covers the case of positive upper density. The result is obvious for the family of infinite subsets.

For the remaining cases, fix $I_1,\ldots,I_q\subset \N_0$ with $\N_0=\bigcup_{j=1}^q I_j$ and $n_1,\ldots,n_q\in\mathbb{N}_0$, where $q\geq 1$. Set $M=\max(n_1,\ldots,n_q)$.

Let $A\subset \N_0$ be such that $\overline{\text{Bd}}(A)>\delta>0$. Fix $N_0\geq 0$. Then there is some $N_1\geq \max(N_0,M)$ such that, for every $N\geq N_1$, there is some $m\geq 0$ such that
\[
\frac{1}{N+1} \text{card}(A\cap [m,m+N]) > \delta.
\]
Now, for some $k$, $1\leq k\leq q$, we have that
\[
\text{card} (A\cap I_k\cap [m,m+N]) \geq \frac{1}{q}\text{card} (A\cap [m,m+N]);
\]
moreover,
\[
\text{card}\big( (n_k+A\cap I_k)\cap [m,m+N+M]\big) \geq \text{card} (A\cap I_k\cap [m,m+N]).
\]
Altogether we obtain that
\begin{align*}
\frac{1}{N+M+1}\text{card} &\Big(\Big(\bigcup_{j=1}^q (n_j+A\cap I_j)\Big)\cap [m,m+N+M]\Big) \geq\\
  &\frac{1}{q} \frac{N+1}{N+M+1}\frac{1}{N+1} \text{card}(A\cap [m,m+N])\geq \frac{1}{2q} \delta,
\end{align*}
which shows that $\bigcup_{j=1}^q (n_j + A\cap I_j)$ has positive upper Banach density.

Finally, let $A\subset \N_0$ be a syndetic set. Then there is some $N\geq 1$ such that every integer interval of length $N$ contains some element of $A$. Let $J=[m_1,m_2]$ be an integer interval of length $N+M$. Then the interval $[m_1,m_1+N]$ contains an element $m\in A$. By assumption there is some $k$, $1\leq k\leq q$, such that $m\in A\cap I_k$. But then $m+n_k$ is an element of $\bigcup_{j=1}^q (n_j + A\cap I_j)$ that belongs to $J$. Thus the set $\bigcup_{j=1}^q (n_j + A\cap I_j)$ is syndetic.
\end{proof}

As a first application we deduce an Ansari-type result for various forms of recurrence. Recall that Ansari \cite{Ans95} has proved that, for any $p\geq 1$, $T$ and $T^p$ have the same hypercyclic vectors. Her proof uses in an essential way a connectedness argument; for recurrence, the argument is simpler.

\begin{theorem}\label{t-ansari}
Let $T\in L(X)$, and let $p\geq 1$. Then $T$ and $T^p$ have the same uniformly recurrent (frequently recurrent, upper frequently recurrent, reiteratively recurrent) vectors.

In particular, if $T$ is uniformly recurrent (frequently recurrent, upper frequently recurrent, reiteratively recurrent) then so is $T^p$.
\end{theorem}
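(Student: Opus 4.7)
The plan is to prove both inclusions, $\text{URec}(T^p) \subseteq \text{URec}(T)$ and $\text{URec}(T) \subseteq \text{URec}(T^p)$, and analogously for the other three families. The first inclusion is easy: for any neighborhood $U$ of $x$, the $p$-dilation $\{pm : T^{pm}x \in U\}$ is contained in $N(x,U)$, and multiplication by $p$ preserves syndeticity as well as positivity of the lower, upper, and upper Banach densities, so $N(x,U)$ lies in the appropriate Furstenberg family $\mathcal{F}$.

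For the harder inclusion, I would combine Lemma~\ref{l-chip} with a structural analysis of the set of ``persistent residues''
\[
R = \{r \in \{0,1,\ldots,p-1\} : N(x,U) \cap (p\N_0 + r) \neq \emptyset \text{ for every neighborhood } U \text{ of } x\}.
\]
Clearly $0 \in R$. The crucial claim is that $R$ is closed under addition modulo $p$: given $r,s \in R$ and a neighborhood $U$ of $x$, first pick $n \equiv r \pmod p$ with $T^n x \in U$, then use continuity of $T^n$ at $x$ to choose a neighborhood $W$ of $x$ with $T^n(W) \subseteq U$, and finally pick $m \equiv s \pmod p$ with $T^m x \in W$; this yields $T^{n+m}x \in U$ with $n+m \equiv r+s \pmod p$, and $n+m$ can be made arbitrarily large. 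A subsemigroup of the finite group $\Z/p\Z$ containing $0$ is a subgroup, so $R$ is in particular closed under $r \mapsto -r$.

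With this in hand, given a neighborhood $U$ of $x$, I would for each $r \in R$ pick $n_r \in N(x,U)$ with $n_r \equiv -r \pmod p$ (taking $n_0 = 0$), then choose a neighborhood $V \subseteq U$ of $x$ small enough that (i) $T^{n_r}(V) \subseteq U$ for every $r \in R$, and (ii) $N(x,V) \cap (p\N_0 + r) = \emptyset$ for every $r \notin R$; this $V$ exists as a finite intersection of open neighborhoods of $x$. By hypothesis $A := N(x,V) \in \mathcal{F}$; writing $A = \bigsqcup_{r \in R} A_r$ with $A_r = A \cap (p\N_0 + r)$, Lemma~\ref{l-chip} applied to the partition $I_r = p\N_0 + r$ with shifts $n_r$ yields $B := \bigcup_{r \in R}(n_r + A_r) \in \mathcal{F}$. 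By construction $B \subseteq p\N_0$, and for $n \in A_r$ one has $T^{n_r + n}x = T^{n_r}(T^n x) \in T^{n_r}(V) \subseteq U$, so $B/p \subseteq \{m : T^{pm}x \in U\}$. Since each of the four families is preserved under the map $B \mapsto B/p$ on $B \subseteq p\N_0$ and under taking supersets, the set $\{m : T^{pm}x \in U\}$ lies in $\mathcal{F}$ as desired.

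The main obstacle will be the structural claim that $R$ is closed under addition modulo~$p$; this is where the continuity of the iterates $T^n$ at $x$ enters essentially and what makes Lemma~\ref{l-chip} applicable with shifts that simultaneously carry $T^{n_r}(V)$ back into $U$ and send each $A_r$ into $p\N_0$.
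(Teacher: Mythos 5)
Your proof is correct and is essentially the paper's own argument: the set of persistent residues (the paper's stabilized set $J$) is shown to be a sub(semi)group of $\Z/p\Z$ by the same continuity argument, Lemma \ref{l-chip} is then applied with shifts $n_r\equiv -r \pmod p$ chosen so that $T^{n_r}$ maps a small neighbourhood $V$ of $x$ into $U$, and one concludes because membership in each of the four families is preserved under dividing a subset of $p\N_0$ by $p$. The only cosmetic differences are that you treat an arbitrary subgroup $R$ directly instead of reducing to prime $p$ and splitting into the cases $J=\{0\}$ and $J=\Z/p\Z$, and that to invoke Lemma \ref{l-chip} literally you should enlarge the family $\{I_r\}_{r\in R}$ by the remaining residue classes (with arbitrary shifts) so that it covers $\N_0$ --- harmless, since by your condition (ii) the set $A$ does not meet them.
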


\begin{proof} Let $p\geq 1$ be given. We will show that $\text{URec}(T) = \text{URec}(T^p)$, where we only use two properties of the family of syndetic sets: CuSP and the fact that $A\subset \N_0$ is syndetic if and only if $pA=\{pn : n\in A\}$ is. Therefore the remaining assertions can be proved in exactly the same way.

It suffices to show that $\text{URec}(T) \subset \text{URec}(T^p)$, the converse inclusion being trivial. We may also suppose that $p$ is a prime number. Thus, let $x$ be a uniformly recurrent vector for $T$. Let $(U_k)_{k\geq 1}$ be a decreasing sequence of neighbourhoods of $x$ that forms a local base. For $k\geq 1$, we define
\[
J_k=\{ j\in \{0,\ldots,p-1\} : \text{there exists $n\geq 0$ with $n=j (\text{mod } p)$ and $T^nx\in U_k$}\}.
\]
Then $(J_k)_{k\geq 1}$ is a decreasing sequence of non-empty finite sets, which therefore stabilizes. That is, there is a non-empty set $J\subset\{0,\ldots,p-1\}$ and some $k_0\geq 1$ such that $J_k=J$ for all $k\geq k_0$.

We claim that $J$ is a subgroup of $\Z/p\Z$. Indeed, let $j,j'\in J$. First, there is some $n\geq 0$ with $n=j (\text{mod } p)$
such that $T^nx\in U_{k_0}$. By continuity there is some $l\geq k_0$ such that $T^n(U_l)\subset U_{k_0}$. Now, since $j'\in J_l$, there is then some $n'\geq 0$ with $n'=j' (\text{mod } p)$ such that $T^{n'}x\in U_l$. Altogether we have that $T^{n+n'}x=T^n(T^{n'}x)\subset U_{k_0}$, hence, by definition, $j+j' (\text{mod } p) \in J_{k_0}=J$.

Since $p$ is prime, $\Z/p\Z$ only has two subgroups. We distinguish these two cases.

(a) We first assume that $J=\{0\}$. Then the sets
\[
A_k:= \{n\geq 0 : T^nx\in U_k\}, \quad k\geq k_0
\]
only consist of multiples of $p$, and they are syndetic by hypothesis. Thus the sets $\frac{1}{p}A_k$ are syndetic, and $(T^p)^n x\in U_k$ for all $n\in \frac{1}{p}A_k$. This shows that $x$ is uniformly recurrent for $T^p$.

(b) Now assume that $J=\{0,\ldots,p-1\}$, hence $J_k=\{0,\ldots,p-1\}$ for all $k\geq 1$. Let $k\geq 1$. For any $j\in J$ we can find some $n_j=p-j(\text{mod } p)$ such that $T^{n_j}x\in U_k$. By continuity there is some $l\geq 1$ such that, for any $j\in J$,
\[
T^{n_j}(U_l)\subset U_k.
\]
By our hypothesis, the set
\[
A_l=\{ n\geq 0 : T^nx \in U_l\}
\]
is syndetic. In order to apply Lemma \ref{l-chip}, we set
\[
I_j = \{ n\geq 0 : n=j (\text{mod } p)\}, \quad j\in J.
\]
Let $n\in A_l\cap I_j$, $j\in J$. Then we have that
\[
T^{n_{j}+n} x = T^{n_{j}}(T^nx)\in U_k.
\]
In other words,
\[
A:=\bigcup_{j\in J} (n_{j} + A_l\cap I_j) \subset \{ n\geq 0 : T^nx\in U_k\}
\]
(we only need the fact that $J$ is the full subgroup for the existence of some $n_{j}$ for any $j\in J$). By Lemma \ref{l-chip}, $A$ is a syndetic set. Moreover, if $m\in A$ then there are $j\in J$ and $n\geq 0$, $n=j (\text{mod } p)$ such that
\[
m=n_{j} + n = p-j+j(\text{mod } p)= 0 (\text{mod } p).
\]
Thus the set $\frac{1}{p}A$ is syndetic and $(T^p)^n x\in U_k$ for all $n\in \frac{1}{p}A$. Since $k\geq 1$ was arbitrary, we see that $x$ is uniformly recurrent for $T^p$.
\end{proof}

\begin{remark}\label{r-tvs1}
In fact, Theorem \ref{t-ansari} holds for any continuous map on any topological space, and in particular for every operator on any topological vector space. In the proof one only needs to replace the countable local base $(U_k)_{k\geq 1}$ by the filter of all neighbourhoods at $x$. See also \cite[Remark 2.4]{CMP}.
\end{remark}

As usual, the $\lambda T$-problem is closely related to the $T^p$-problem: Le\'on and M\"uller \cite{LeMu04} have shown that, for any scalar $\lambda$ of modulus 1, $T$ and $\lambda T$ have the same hypercyclic vectors. Like for hypercyclicity, the proof in the $\lambda T$-case for recurrence requires somewhat more work than in the $T^p$-case.

\begin{theorem}\label{t-LeMu}
Let $T\in L(X)$, and let $\lambda$ be a scalar with $|\lambda|=1$. Then $T$ and $\lambda T$ have the same uniformly recurrent (frequently recurrent, upper frequently recurrent, reiteratively recurrent) vectors.

In particular, if $T$ is uniformly recurrent (frequently recurrent, upper frequently recurrent, reiteratively recurrent) then so is $\lambda T$.
\end{theorem}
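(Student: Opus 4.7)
My plan is to follow the template of the proof of Theorem~\ref{t-ansari}: prove the inclusion $\text{URec}(T) \subseteq \text{URec}(\lambda T)$ (the reverse inclusion follows by symmetry with $T$ replaced by $\lambda T$ and $\lambda$ by $\bar\lambda$), while the three other recurrence notions are handled by exactly the same argument after invoking the corresponding CuSP family from Lemma~\ref{l-chip}. The role played by the residue subgroup of $\mathbb{Z}/p\mathbb{Z}$ in Theorem~\ref{t-ansari} will be taken over by a \emph{cluster subgroup} inside $\mathbb{T}$.

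For $x \in \text{URec}(T)$, I would introduce
$$K(x) := \bigcap_{U \ni x} \overline{\{\lambda^n : n \in N(x,U)\}} \subseteq \mathbb{T},$$
a decreasing intersection of nonempty closed subsets of the compact set $\mathbb{T}$, hence itself nonempty (and containing $\lambda^0 = 1$). The key structural claim is that $K(x)$ is a closed subsemigroup: given $\mu, \mu' \in K(x)$ and neighborhoods $U \ni x$, $W \ni \mu\mu'$, one finds a witness $n_1$ with $T^{n_1}x \in U$ and $\lambda^{n_1}$ near $\mu$, shrinks via continuity of $T^{n_1}$ to a neighborhood $V \ni x$ with $T^{n_1}(V) \subseteq U$, then finds a witness $n_2$ with $T^{n_2}x \in V$ and $\lambda^{n_2}$ near $\mu'$; the exponent $n_1+n_2$ witnesses $\mu\mu' \in K(x)$. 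Since closed subsemigroups of a compact group are subgroups, $K(x)$ is a closed subgroup of $\mathbb{T}$---hence either $\mathbb{T}$ itself or a finite cyclic group---and in particular it is closed under inversion.

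To produce a syndetic return set for $\lambda T$ at $x$, fix a neighborhood $U$ of $x$ and choose a balanced $0$-neighborhood $V$ with $x + V + V + V \subseteq U$ and $\eta > 0$ with $(\mu-1)x \in V$ whenever $|\mu-1| < \eta$. Cover the compact set $K(x)$ by finitely many arcs $B(\mu_j, \eta/4)$ with centers $\mu_j \in K(x)$, $j = 1, \ldots, q$. Since $\mu_j^{-1} \in K(x)$, the definition of $K(x)$ supplies exponents $m_j \geq 0$ with $\lambda^{m_j} \in B(\mu_j^{-1}, \eta/4)$ and $T^{m_j}x \in x+V$. Next choose a balanced $V' \subseteq V$ with $T^{m_j}(V') \subseteq V$ for every $j$, and shrink once more to a neighborhood $\tilde U \subseteq x+V'$ of $x$ such that $\lambda^n \in \bigcup_{j=1}^q B(\mu_j, \eta/4)$ for every $n \in N(x, \tilde U)$ (possible from the defining property of $K(x)$ in the finite case, and trivial when $K(x) = \mathbb{T}$). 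Set $A' := N(x, \tilde U)$, syndetic by hypothesis, and partition $\mathbb{N}_0 = \tilde I_0 \cup \bigcup_{j \geq 1} \tilde I_j$ with $\tilde I_j := \{n : \lambda^n \in B(\mu_j, \eta/4)\}$ and $\tilde I_0 := \mathbb{N}_0 \setminus \bigcup_{j\geq 1} \tilde I_j$ (which meets $A'$ trivially); Lemma~\ref{l-chip} then guarantees that
$$A := \bigcup_{j=1}^q \bigl(m_j + A' \cap \tilde I_j\bigr)$$
is syndetic.

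A short estimate on any $n' = m_j + n \in A$ with $n \in A' \cap \tilde I_j$ then shows $(\lambda T)^{n'}x \in U$: one has $|\lambda^{n'} - 1| \leq \eta/2$, so $(\lambda^{n'}-1)x \in V$; also $T^{n'}x - x = T^{m_j}(T^n x - x) + (T^{m_j}x - x) \in V + V$ since $T^n x - x \in V'$; and finally $(\lambda T)^{n'}x - x = \lambda^{n'}(T^{n'}x - x) + (\lambda^{n'}-1)x \in V + V + V \subseteq U - x$ by balancedness of $V + V$ together with $|\lambda^{n'}| = 1$. The main obstacle I anticipate is the structural claim that $K(x)$ is a subsemigroup, combined with the careful sequencing of the quantitative choices $(V, \eta, m_j, V', \tilde U)$ so that all approximations remain consistent; once these are in place, CuSP from Lemma~\ref{l-chip} does the heavy lifting, exactly mirroring its role in Theorem~\ref{t-ansari}.
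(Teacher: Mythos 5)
Your proposal is correct and follows essentially the same route as the paper: you form the cluster set of $\{\lambda^n\}$ along return times (the paper's $\Lambda$, your $K(x)$), show it is a closed subsemigroup of $\T$, use finitely many correcting exponents to land $\lambda^{m_j+n}$ near $1$ while keeping $T^{m_j+n}x$ near $x$, and finish with the CuSP property of Lemma~\ref{l-chip}, noting that only CuSP is needed so the other three notions follow identically. The only difference is cosmetic: the paper splits into the cases $\Lambda=\T$ and $\Lambda$ a finite group of roots of unity, whereas you treat both at once by observing that a closed subsemigroup of a compact group is a group and covering $K(x)$ itself, using its inverses — a mild streamlining of the same argument.
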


\begin{proof} This time we only use the CuSP property, so it again suffices to do the uniformly recurrent case.

The real scalar case already follows from Theorem \ref{t-ansari} because $(-T)^2 =T^2$. Thus we need only consider complex scalars. Alternatively one can also do the following proof for $\R$ instead of $\C$.

It obviously suffices to show that $\text{URec}(T)\subset \text{URec}(\lambda T)$ whenever $|\lambda|=1$. Thus, let $\lambda\in\C$ with $|\lambda|=1$, and let $x$ be a uniformly recurrent vector for $T$. Let $(U_k)_{k\geq 1}$ be a decreasing sequence of neighbourhoods of $x$ that forms a local base. For $k\geq 1$, we define
\[
\Lambda_k=\{ \mu\in\T: \text{there exists $n\geq 0$ with $\lambda^n=\mu$ and $T^nx\in U_k$}\},
\]
where $\T=\{z\in\C : |z|=1\}$ denotes the unit circle. Then $(\Lambda_k)_{k\geq 1}$ is a decreasing sequence of non-empty subsets of $\T$. Let
\[
\Lambda = \bigcap_{k=1}^\infty \overline{\Lambda_k}.
\]
Since $\Lambda$ is the intersection of a decreasing sequence of non-empty closed sets, it is a non-empty closed subset of $\T$.

We now claim that $\Lambda$ is a subsemigroup of the multiplicative group $\T$. To see this, let $\mu,\mu'\in \Lambda$. Let $k\geq 1$ and $\varepsilon>0$. Then there is some $\mu_k\in\Lambda_k$ such that $|\mu-\mu_k|<\varepsilon$. This implies that there is some $n_k\geq 0$ such that $\lambda^{n_k}=\mu_k$ and $T^{n_k}x\in U_k$. By continuity there is some $l\geq 1$ such that $T^{n_k}(U_l)\subset U_k$. We then find some $\mu'_l\in\Lambda_l$ such that $|\mu'-\mu'_l|<\varepsilon$ and hence some $n'_l\geq 0$ such that $\lambda^{n'_l}=\mu'_l$ and $T^{n'_l}x\in U_l$. Altogether we get that $T^{n_k+n'_l}x \in T^{n_k}(U_l)\subset U_k$. Since $\lambda^{n_k+n'_l}=\mu_k\mu'_l$, we deduce that $\mu_k\mu'_l\in \Lambda_k$. On the other hand,
\[
|\mu\mu'-\mu_k\mu'_l |\leq |\mu-\mu_k|\ |\mu'| + |\mu_k|\ |\mu'-\mu'_l|<2\varepsilon.
\]
Since $\varepsilon>0$ is arbitrary, $\mu\mu'\in\overline{\Lambda_k}$, and that for any $k\geq 1$. Thus $\mu\mu'\in \Lambda$, as had to be shown.

As a consequence, there are only two possibilities for $\Lambda$, see \cite[pp. 170--171]{GrPe11}.

(a) This time it is easier to start with the full group: $\Lambda = \T$. Let $U$ be a neighbourhood of $x$. By continuity of scalar multiplication there is some $\varepsilon>0$ and some $k\geq 1$ such that $B(1,\varepsilon)U_k\subset U$, where $B(z_0,\varepsilon)=\{z\in \C : |z-z_0|<\varepsilon\}$. Since $\Lambda=\T$, the set $\Lambda_k$ is dense in $\T$. Using compactness there are $N\geq 1$ and $n_j\geq 0$ with $T^{n_j}x\in U_k$, $j=1,\ldots,N$, such that
\begin{equation}\label{cover}
\T\subset \bigcup_{j=1}^N B(\lambda^{n_j},\varepsilon).
\end{equation}
By continuity there exists $l\geq 1$ such that, for $j=1,\ldots,N$,
\[
T^{n_j}(U_l)\in U_k,
\]
and we have that the set
\[
A_l:=\{n\geq 0 : T^n x \in U_l\}
\]
is syndetic. Also, it follows from \eqref{cover} that the sets
\[
I_j := \{ n\geq 0 : \lambda^{n_j+n} \in B(1,\varepsilon)\},\quad j=1,\ldots,N,
\]
form a cover of $\N_0$.

Now, if $n\in A_l\cap I_j$, $j=1,\ldots,N$, then
\[
(\lambda T)^{n_j+n}x = \lambda^{n_j+n} T^{n_j}(T^nx) \in B(1,\varepsilon)T^{n_j}(U_l)\subset B(1,\varepsilon)U_k \subset U.
\]
This shows that
\[
A:=\bigcup_{j=1}^N (n_j+A_l\cap I_j) \subset \{n\geq 0 : (\lambda T)^nx\in U\},
\]
and it follows from Lemma \ref{l-chip} that $\{n\geq 0 : (\lambda T)^nx\in U\}$ is syndetic. Thus $x$ is uniformly recurrent for $\lambda T$.

(b) It remains the case when there is some $N\geq 1$ such that $\Lambda = \{ \text{e}^{2\pi i\frac{j}{N}} : j=1,\ldots, N\}$. Let $U$ be a neighbourhood of $x$, and then $\varepsilon>0$ and $k'\geq 1$ such that $B(1,\varepsilon)U_{k'}\subset U$. It follows from a simple compactness argument that there is some $k\geq k'$ such that
\[
\Lambda_k\subset \bigcup_{j=1}^N B\big(\text{e}^{2\pi i\frac{j}{N}},\tfrac{\varepsilon}{2}\big).
\]
Moreover, since $\text{e}^{2\pi i\frac{-j}{N}}\in\Lambda\subset \overline{\Lambda_k}$, $j=1,\ldots,N$, there are $n_j\geq 0$ with $T^{n_j}x\in U_k$ and
\[
\big|\lambda^{n_j} - \text{e}^{2\pi i\frac{-j}{N}}\big|<\tfrac{\varepsilon}{2}.
\]
As before there is some $l\geq k$ such that, for $j=1,\ldots,N$,
\[
T^{n_j}(U_l)\subset U_k,
\]
and the set
\[
A_l=\{n\geq 0: T^nx\in U_l\}
\]
is syndetic.

Now, let $n\in A_l$. Then $\lambda^n\in\Lambda_l\subset \Lambda_k$, so that there is some $j\in \{1,\ldots,N\}$ such that $|\lambda^n-\text{e}^{2\pi i\frac{j}{N}}|<\tfrac{\varepsilon}{2}$, hence
\[
|\lambda^{n_{j}}\lambda^{n} -1|\leq \big|\lambda^{n_{j}}- \text{e}^{2\pi i\frac{-j}{N}}\big|  |\lambda^n| + \big|\text{e}^{2\pi i\frac{-j}{N}}\lambda^n-1\big| <\varepsilon.
\]
This shows that the sets
\[
I_j := \{ n\geq 0 : \lambda^{n_j+n} \in B(1,\varepsilon)\},\quad j=1,\ldots,N,
\]
cover $A_l$. From here the proof can be finished as in case (a). In order that the sets $I_j$ cover $\N_0$ one might add $I_0=\N_0\setminus A_l$, which has no influence in the sequel.
\end{proof}

\begin{remark}\label{r-tvs2}
Again, by considering the neighbourhood filter instead of the countable local base $(U_k)_{k\geq 1}$, the same proof shows that Theorem \ref{t-LeMu} holds for any operator on any topological vector space.
\end{remark}

Our proofs for the $T^p$- and $\lambda T$-problems work equally well for recurrent operators and therefore provide alternative, if longer, proofs to those of Costakis et al.~\cite{CMP}.

If we combine the last two theorems with Theorems \ref{ufreq-hyp} and \ref{freq-hyp} we obtain new proofs of Ansari-type and Le\'on-M\"uller type theorems for upper frequent hypercyclicity and frequent hypercyclicity that are due to Shkarin \cite{Shk09} and Bayart, Grivaux and Matheron \cite{BaGr06}, \cite{BaMa09}, respectively.

The corresponding results for reiterative hypercyclicity follow with Theorem \ref{reit-hyp}. They seem to be new. Note, however, that the only issue is that reiterative hypercyclicity passes to $T^p$ or $\lambda T$ because, for a reiteratively hypercyclic operator, the sets of reiteratively hypercyclic and hypercyclic vectors coincide, see \cite{BMPP16}.

\begin{corollary}
Let $T\in L(X)$ be reiteratively hypercyclic.

\emph{(a)} If $p\geq 1$, then $T^p$ is reiteratively hypercyclic.

\emph{(b)} If $\lambda$ is a scalar with $|\lambda|=1$, then $\lambda T$ is reiteratively hypercyclic.
\end{corollary}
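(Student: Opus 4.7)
The plan is to combine the equivalence (f) $\Leftrightarrow$ (a) from Theorem \ref{reit-hyp} with the recurrence results in Theorems \ref{t-ansari} and \ref{t-LeMu}, using along the way the classical Ansari and Le\'on--M\"uller theorems for ordinary hypercyclicity. The key observation, already flagged in the paragraph preceding the statement, is that it suffices to produce a single hypercyclic reiteratively recurrent vector for $T^p$ (respectively $\lambda T$): Theorem \ref{reit-hyp} then automatically upgrades this to full reiterative hypercyclicity.

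For (a), I would first invoke Ansari's theorem to conclude that $T^p$ is hypercyclic and that any hypercyclic vector $x$ of $T$ is also a hypercyclic vector of $T^p$. Since $T$ is reiteratively hypercyclic, Theorem \ref{reit-hyp} (equivalence with (e)) tells us that every hypercyclic vector for $T$ is reiteratively hypercyclic for $T$, in particular reiteratively recurrent for $T$. Applying Theorem \ref{t-ansari} then yields that $x$ is reiteratively recurrent for $T^p$ as well. Thus $x$ is a hypercyclic reiteratively recurrent vector for $T^p$, and (f) $\Rightarrow$ (a) of Theorem \ref{reit-hyp} finishes the argument.

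For (b), the reasoning is parallel but uses the Le\'on--M\"uller theorem in place of Ansari: if $|\lambda|=1$, then $\lambda T$ is hypercyclic with the same hypercyclic vectors as $T$. Pick any hypercyclic vector $x$ of $T$; as above it is reiteratively recurrent for $T$, and by Theorem \ref{t-LeMu} it is therefore reiteratively recurrent for $\lambda T$. Since $x$ is also hypercyclic for $\lambda T$, Theorem \ref{reit-hyp}(f) $\Rightarrow$ (a) gives that $\lambda T$ is reiteratively hypercyclic.

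There is no real obstacle here once the pieces are lined up correctly; the subtle point is simply that we cannot argue directly that a reiteratively hypercyclic vector for $T$ is reiteratively hypercyclic for $T^p$ (the recurrence theorems are phrased in terms of \emph{recurrent} vectors rather than hypercyclic ones), so the transfer must be routed through the reiterative \emph{recurrence} of the vector together with its ordinary hypercyclicity for $T^p$ or $\lambda T$, and then re-promoted to reiterative hypercyclicity via Theorem \ref{reit-hyp}.
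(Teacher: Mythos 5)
Your proposal is correct and follows essentially the same route as the paper: the paper likewise obtains the corollary by combining Theorems \ref{t-ansari} and \ref{t-LeMu} with Theorem \ref{reit-hyp}, using the classical Ansari and Le\'on--M\"uller theorems to transfer ordinary hypercyclicity and the fact that for a reiteratively hypercyclic operator the hypercyclic vectors are reiteratively hypercyclic (hence reiteratively recurrent). Whether one then concludes via condition (d) or, as you do, via condition (f) of Theorem \ref{reit-hyp} is an immaterial difference.
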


We have another interesting application of Theorem \ref{t-LeMu}. In \cite[Question 7.11]{GriMaMe}, the authors ask whether any Banach space operator with a dense set of uniformly recurrent vectors must have a non-zero periodic point. Since $\lambda I$ with $\lambda \in\mathbb{T}$ not a root of unity provides a counter-example, see also Remark \ref{r-GMM} below, the authors probably were only interested in hypercyclic operators. Still, a negative answer follows form the theorem above and an important counter-example of Bayart and Berm\'udez \cite{BaBe09}.

\begin{corollary}\label{c-unifnotper}
There exists a hypercyclic operator on Hilbert space that has a dense set of uniformly recurrent vectors but no non-zero periodic points.
\end{corollary}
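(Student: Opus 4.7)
The plan is to apply Theorem~\ref{t-LeMu} to a carefully chosen chaotic operator on Hilbert space produced by Bayart and Berm\'udez \cite{BaBe09}, and then to pass to one of its rotations $\lambda T$ with $\lambda \in \mathbb{T}$ not a root of unity. Bayart and Berm\'udez constructed a chaotic Hilbert-space operator $T$ whose point spectrum on the unit circle is small enough that every unimodular eigenvalue is a root of unity; chaoticity of $T$ yields immediately that $T$ is hypercyclic and that $\text{Per}(T)$, hence also $\text{URec}(T)$, is dense.

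With $T$ in hand, I would pick any $\lambda \in \mathbb{T}$ that is not a root of unity and set $S := \lambda T$. Then $S$ is hypercyclic by the classical Le\'on-M\"uller theorem, and $\text{URec}(S) = \text{URec}(T)$ is dense by Theorem~\ref{t-LeMu}, so the only remaining point to verify is the absence of non-zero periodic points. Suppose $S^n x = x$ for some $n \geq 1$ and $x \neq 0$; then $T^n x = \lambda^{-n} x$, so the finite-dimensional $T$-invariant subspace $E = \text{span}\{x, Tx, \ldots, T^{n-1} x\}$ satisfies $T^n|_E = \lambda^{-n} I_E$. Every eigenvalue $\mu$ of $T|_E$ thus lies in $\sigma_p(T) \cap \mathbb{T}$ and satisfies $\mu^n = \lambda^{-n}$; combining these with the spectral restriction above forces $\lambda^{-n}$ to be a root of unity, contradicting the choice of $\lambda$. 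Hence $S$ has no non-zero periodic points.

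The main obstacle — the only nontrivial input — is invoking the correct Bayart-Berm\'udez counter-example to guarantee that $\sigma_p(T) \cap \mathbb{T}$ consists only of roots of unity. Once this spectral property of the operator in \cite{BaBe09} is cited, the rest of the argument is the short verification above, relying only on Theorem~\ref{t-LeMu} together with the Le\'on-M\"uller theorem for hypercyclicity.
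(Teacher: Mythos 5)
Your route is structurally the same as the paper's (the Bayart--Berm\'udez example, Theorem~\ref{t-LeMu} for dense uniform recurrence of the rotated operator, the Le\'on--M\"uller theorem for its hypercyclicity, and the identification of periodic points with eigenvectors to root-of-unity eigenvalues), and your finite-dimensional verification that a non-zero periodic point of $\lambda T$ produces a unimodular eigenvalue $\mu$ of $T$ with $\mu^n=\lambda^{-n}$ is correct --- it is essentially \cite[Proposition 2.33]{GrPe11}. The genuine gap is the input you attribute to \cite{BaBe09}. Theorem 3.1 of that paper provides a chaotic operator $T$ on Hilbert space together with \emph{one specific} $\lambda\in\mathbb{T}$ such that $\lambda T$ is not chaotic, and what its proof actually yields (and what the present paper uses) is that the point spectrum of this particular $\lambda T$ contains no root of unity. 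It does not yield your claim that $\sigma_p(T)\cap\mathbb{T}$ consists only of roots of unity: the construction there rests on a spanning (indeed perfectly spanning) family of unimodular eigenvectors, so the unimodular point spectrum of $T$ is an uncountable set in which the roots of unity are merely dense, and the rotation $\lambda$ has to be chosen carefully so that $\lambda\,\sigma_p(T)$ avoids all roots of unity.

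Because of this, the freedom to take an arbitrary $\lambda\in\mathbb{T}$ that is not a root of unity is exactly what is not available. Indeed, if $\mu\in\sigma_p(T)\cap\mathbb{T}$ is not a root of unity and $\omega$ is a root of unity, then $\lambda=\omega\mu^{-1}$ is not a root of unity, yet $\lambda T$ has the eigenvalue $\omega$ and hence non-zero periodic points; so for such $\lambda$ your argument would prove a false conclusion. The repair is to run your scheme with the specific pair $(T,\lambda)$ furnished by \cite[Theorem 3.1]{BaBe09}: since $\sigma_p(\lambda T)$ contains no root of unity, $\lambda T$ has no non-zero periodic points, while chaoticity of $T$ gives $\mathrm{Per}(T)\subset\mathrm{URec}(T)$ dense, Theorem~\ref{t-LeMu} transfers dense uniform recurrence to $\lambda T$, and the Le\'on--M\"uller theorem gives its hypercyclicity --- which is precisely the paper's proof.
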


\begin{proof}
In \cite[Theorem 3.1]{BaBe09} it is proved that there exists a chaotic operator $T$ on complex Hilbert space such that $\lambda T$ is not chaotic for some $\lambda\in\T$. Indeed, the proof even shows that the point spectrum of $\lambda T$ contains no root of unity, so that $\lambda T$ has no non-zero periodic points, see \cite[Proposition 2.33]{GrPe11}. Now since periodic points are uniformly recurrent vectors, the operator $T$ is uniformly recurrent, and then so is $\lambda T$ by Theorem \ref{t-LeMu}.
\end{proof}

\begin{remark}\label{r-GMM}
Let us mention that the corollary can be proved without Theorem \ref{t-LeMu}. Indeed, if $x$ is a periodic point for an operator $T$, then it follows rather directly that $x$ is uniformly recurrent for $\lambda T$ for any $\lambda \in \T$. To see this, suppose that $T^{N}x=x$ for some $N\geq 1$, and let $\lambda \in \T$. Let $\varepsilon>0$. It is well known (see also Lemma \ref{l-kron} below) that there is then a syndetic set $A\subset \N_0$ such that $|(\lambda^{N})^n-1|<\frac{\varepsilon}{\|x\|}$ for all $n\in A$; of course we may assume that $x\neq 0$. Hence $|(\lambda T)^{nN}x-x|=|(\lambda^N)^n-1|\|x\|< \varepsilon$ for all $n\in A$. This shows that $x$ is uniformly recurrent for $\lambda T$.
\end{remark}

We turn to the spectrum of recurrent operators if the underlying space is a complex Banach space. By Costakis et al.~\cite{CMP} we know that every component meets the unit circle. We have additional information when $T$ is upper frequently recurrent. Shkarin \cite[Theorem 1.2 and its proof]{Shk09} showed that the spectrum of an upper frequently hypercyclic operator cannot have isolated points. His argument also serves to show the following.

\begin{theorem}
Let $X$ be a complex Banach space and $T\in L(X)$ be an upper frequently recurrent operator.
\begin{itemize}
\item[\rm (a)] If $\sigma(T)=\{\lambda\}$ is a singleton, then $|\lambda|=1$ and $T=\lambda I$.
\item[\rm (b)] If $\sigma(T)$ has an isolated point $\lambda\in\C$, then $|\lambda|=1$ and there are non-trivial $T$-invariant closed subspaces $M_1$ and $M_2$ of $X$ such that $X=M_1\oplus M_2$ and $T|_{M_1} = \lambda I|_{M_1}$.
\end{itemize}
\end{theorem}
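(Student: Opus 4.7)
The plan is to reduce part (b) to part (a) via the Riesz spectral projection and then prove (a) directly. For (b), the Riesz projection $P$ associated to the isolated spectral value $\lambda$ commutes with $T$ and gives a $T$-invariant topological direct sum $X = M_1 \oplus M_2$, where $M_1 := PX$, $M_2 := (I-P)X$, with $\sigma(T|_{M_1}) = \{\lambda\}$ and $\sigma(T|_{M_2}) = \sigma(T) \setminus \{\lambda\}$; both summands are nontrivial, because the case $\sigma(T) = \{\lambda\}$ is precisely (a). Continuity of $P$ transfers upper frequent recurrence from $T$ to $T|_{M_1}$: any neighbourhood $V$ of $Px$ in $M_1$ pulls back to a neighbourhood $P^{-1}(V)$ of $x$ in $X$, and the return sets satisfy $N(x, P^{-1}(V)) \subseteq N(Px, V)$. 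Hence $P(\text{UFRec}(T)) \subseteq \text{UFRec}(T|_{M_1})$ is dense in $M_1$, making $T|_{M_1}$ upper frequently recurrent with singleton spectrum, and part (a) applied to $T|_{M_1}$ finishes (b).

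For (a), the Costakis--Manoussos--Parissis result recalled at the start of this section forces $|\lambda| = 1$, since $\{\lambda\}$ is the only component of $\sigma(T)$. Invoking Theorem \ref{t-LeMu}, $\overline{\lambda}T$ is also upper frequently recurrent with $\sigma(\overline{\lambda}T) = \{1\}$, so we may assume $\lambda = 1$ and write $T = I + Q$ with $Q := T - I$ quasi-nilpotent. Our goal becomes $Q = 0$, and since $\ker Q$ is closed it is enough to show that every upper frequently recurrent vector lies in $\ker Q$.

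Fix $v \in \text{UFRec}(T)$ and, for contradiction, assume $Qv \neq 0$. The binomial expansion
\[
T^n v - v = \sum_{k=1}^{\infty}\binom{n}{k} Q^k v
\]
converges in norm by quasi-nilpotence of $Q$. If $v$ happens to lie in the generalised kernel $\bigcup_{k \geq 0}\ker Q^k$, the sum is a vector-valued polynomial in $n$ of positive degree with nonzero leading coefficient, so $\|T^n v - v\|$ grows polynomially to $\infty$, making the return set of $v$ to any bounded neighbourhood finite and contradicting positive upper density. In the complementary case $Q^k v \neq 0$ for all $k$, quasi-nilpotence rules out any polynomial relation $p(Q)v = 0$ with $p(0) \neq 0$ (else $p(Q)$ would be invertible and $v = 0$), so the vectors $v, Qv, Q^2 v, \dots$ are linearly independent; by Hahn--Banach we may then choose, for each $N \geq 2$, $\phi_N \in X^*$ with $\phi_N(v) = 0$, $\phi_N(Qv) = 1$ and $\phi_N(Q^k v) = 0$ for $2 \leq k \leq N$, so that $\phi_N(T^n v) = n$ on $0 \leq n \leq N$.

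The main obstacle is to extract a contradiction from this last identity: the upper frequent recurrence of $v$ gives $|\phi_N(T^n v)| \leq \|\phi_N\|\,\varepsilon$ on a set of positive upper density, which forces $\|\phi_N\|$ to grow at least linearly in $N$. Following the strategy of Shkarin \cite{Shk09}, one passes to the cyclic subspace $Z := \overline{\operatorname{span}}\{Q^k v : k \geq 0\}$ and exploits the quasi-nilpotence of $Q|_Z$ to produce a uniform bound on (a suitably normalised version of) the $\phi_N$, contradicting the linear growth. We conclude that $Qv = 0$, hence $\text{UFRec}(T) \subseteq \ker Q$ and therefore $Q = 0$ and $T = \lambda I$.
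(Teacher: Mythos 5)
Your reduction of (b) to (a) via the Riesz projection and quasi-conjugacy is exactly the paper's route, and the opening of (a) (unimodularity of $\lambda$ from the Costakis--Manoussos--Parissis spectral result, then rotation by $\overline\lambda$ using Theorem \ref{t-LeMu} to reduce to $T=I+Q$ with $Q$ quasinilpotent) also matches. The problem is the core of (a). The paper does not reprove anything here: it quotes, as a black box, the outcome of Shkarin's argument as analysed in \cite[Remark on p.~153]{BaMa09}, namely that if $S\neq I$ and $\sigma(S)=\{1\}$ then there is a non-empty open set $U$ such that $N(x,U)$ has upper density zero for \emph{every} $x\in X$; this instantly excludes any upper frequently recurrent vector in $U$, hence contradicts density of $\mathrm{UFRec}(S)$ and forces $S=I$.

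Your attempted replacement of that black box has a genuine gap, and you flag it yourself. The case $v$ in the generalised kernel of $Q$ is fine, and the construction of $\phi_N$ with $\phi_N(T^nv)=n$ for $0\le n\le N$ correctly forces $\|\phi_N\|\gtrsim N/\varepsilon$ along a subsequence. But the contradiction requires an a priori \emph{sublinear} bound on $\|\phi_N\|$, and the sentence ``one passes to the cyclic subspace $Z$ and exploits the quasi-nilpotence of $Q|_Z$ to produce a uniform bound on (a suitably normalised version of) the $\phi_N$'' is an unproven claim, not a proof. Nothing in quasinilpotence controls the norms of Hahn--Banach functionals biorthogonal to the (possibly very ill-conditioned) sequence $v,Qv,\dots,Q^Nv$; generically such functionals have rapidly growing norms, and it is precisely the delicate part of Shkarin's proof in \cite{Shk09} to circumvent this (he does not bound such biorthogonal functionals but argues differently with the binomial series). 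As written, the key estimate is missing and it is not clear the specific normalisation you propose can be made to work; either carry out Shkarin's argument in full for your setting, or do what the paper does and cite the precise statement from \cite{Shk09}/\cite{BaMa09} that an operator $S\neq I$ with $\sigma(S)=\{1\}$ admits a non-empty open set all of whose return sets have upper density zero.
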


In particular, in either case, $T$ cannot also be hypercyclic; indeed, in (b), the usual quasi-conjugacy argument (see \cite[Proposition 2.25]{GrPe11}) would imply that $T|_{M_1} = \lambda I|_{M_1}$ was hypercyclic, which is absurd. Note that this result also contains the well known fact that the spectrum of any chaotic operator has no isolated points, see \cite{BMP01}, \cite[Proposition 6.37]{BaMa09}.

\begin{proof} (a) By the result of Costakis et al.~\cite{CMP} mentioned above we have that $|\lambda|=1$. Let $S=\lambda^{-1} T$, which is also upper frequently recurrent by Theorem \ref{t-LeMu}. On the other hand, an analysis of Shkarin's argument, see \cite[Remark on p. 153]{BaMa09}, shows that if $S\neq I$ then one can find a non-empty open set $U$ of $X$ such that $\{n\geq 0 : S^nx\in U\}$ has upper density zero for all $x\in X$; in particular, no vector in $U$ is upper frequently recurrent for $S$. Thus $S=I$, hence $T=\lambda I$.

(b) This follows from (a) by the usual argument employing the Riesz decomposition theorem and quasi-conjugacy, see for example the proofs of \cite[Proposition 6.37]{BaMa09} or \cite[Proposition 5.7]{GrPe11}.
\end{proof}

It is not clear whether the result extends to reiterative recurrence. By a result of Salas \cite{Sal95}, see also \cite[Example 8.4]{GrPe11}, there exist hypercyclic compact perturbations $T=I+K$ of the identity with $\sigma(T)=\{1\}$.

\begin{question}\footnote{This question has recently been solved in the negative; see \cite{CarMur2}.}\label{q8}
(a) Can the spectrum of a reiteratively hypercyclic operator be a singleton?

(b) Does there exist a reiteratively hypercyclic compact perturbation of the identity?
\end{question}

\section {Recurrence of weighted backward shift operators}\label{s-recshift}

Backward shifts are the best understood class of operators in linear dynamics. In particular they will serve us here to distinguish five of the six types of recurrence considered in \eqref{eq-incl}.

Apart from the proof of the latter fact, this section contains no proofs: the other results are special cases of stronger results proved either in Section \ref{s-recsize}, or in a forthcoming paper by the first two authors \cite{BG2}, or by other authors. We find it nonetheless instructive to highlight the recurrence behaviour of weighted shifts.

We first need some terminology. A Fr\'echet sequence space (over $\N$) is a Fr\'echet space that is a subspace of the space $\mathbb{K}^{\mathbb{N}}$ of all (real or complex) sequences and such that each coordinate functional $x=(x_n)_{n\geq 1}\to x_k$, $k\geq 1$, is continuous. The canonical unit sequences are denoted by $e_k=(\delta_{k,n})_{n\geq 1}$. A weight sequence is a sequence $w=(w_n)_{n\geq 1}$ of non-zero scalars. The (unilateral) weighted backward shift $B_w$ is then defined by $B_w(x_n)_{n\geq 1}=(w_{n+1}x_{n+1})_{n\geq 1}$.

Fr\'echet sequence spaces over $\Z$ and bilateral weighted backward shifts are defined analogously.

Now, Theorem \ref{t-zero_orbits} applies in particular to unilateral weighted backward shifts.

\begin{corollary}\label{c-unishift}
Let $X$ be a Fr\'echet sequence space over $\N$ in which $(e_n)_{n\geq 1}$ is a basis. Suppose that the weighted backward shift $B_w$ is an operator on $X$. Then we have the following:
\begin{itemize}
\item[\emph{(a)}] if $B_w$ is recurrent then it is hypercyclic;
\item[\emph{(b)}] if $B_w$ is reiteratively recurrent then it is reiteratively hypercyclic;
\item[\emph{(c)}] if $B_w$ is upper frequently recurrent then it is upper frequently hypercyclic.
\end{itemize}
\end{corollary}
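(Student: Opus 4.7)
The plan is to invoke Theorem \ref{t-zero_orbits} directly. Its hypothesis asks for a dense set of vectors $x \in X$ whose orbit under $T$ tends to $0$, and for a unilateral weighted backward shift on a Fréchet sequence space with basis $(e_n)_{n\geq 1}$ this is essentially immediate: the set $D = \operatorname{span}\{e_n : n \geq 1\}$ of finitely supported sequences is dense, and $B_w$ eventually annihilates every element of $D$.

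More precisely, I would first observe that since $(e_n)_{n\geq 1}$ is a Schauder basis of $X$, the linear span $D$ is dense in $X$. Then, for any $x = \sum_{k=1}^{N} a_k e_k \in D$, a direct computation using the defining formula $B_w(x_n)_{n\geq 1} = (w_{n+1}x_{n+1})_{n\geq 1}$ shows that $B_w^n x = 0$ for every $n \geq N$; in particular $B_w^n x \to 0$ as $n \to \infty$. Thus $B_w$ satisfies the hypothesis of Theorem \ref{t-zero_orbits}, and the three conclusions (a), (b), (c) follow at once from the three corresponding parts of that theorem.

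I do not anticipate any real obstacle: the only point that needs a moment's thought is that the density of $D$ is used in its correct form, namely as the existence of a dense set on which the orbit tends to zero, and this is exactly what the basis assumption delivers. No additional hypothesis on the weight sequence $w$ is required, since the fact that $B_w^n e_k = 0$ once $n \geq k$ depends only on the shift structure, not on the values of the $w_j$'s.
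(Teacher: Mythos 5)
Your argument is exactly the paper's: the corollary is stated there as an immediate application of Theorem~\ref{t-zero_orbits}, using precisely the observation that the finitely supported sequences are dense (by the basis assumption) and are annihilated by $B_w$ after finitely many iterations, so every orbit on this dense set tends to zero. Your proposal is correct and fills in this routine verification in the same way.
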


Note that each of the hypercyclic properties in the corollary have been characterized in terms of the weights, at least if the basis is unconditional, see \cite[Theorem 4.8]{GrPe11}, \cite[Theorem 5.1]{BG}.

There is a considerable strengthening of assertion (a). By a remarkable result of Chan and Seceleanu \cite{ChSe}, if a weighted shift on $\ell^p(\N)$, $1\leq p<\infty$, admits an orbit with a non-zero limit point, then it is already hypercyclic. Recently, this was extended by He et al.~\cite[Lemma 5]{HHY18} to all Fr\'echet sequence spaces over $\N$ in which $(e_n)_{n\geq 1}$ is an unconditional basis; see also \cite{BG2}.

For bilateral weighted shifts we have the analogue of (a).

\begin{theorem}\label{t-bilshift}
Let $X$ be a Fr\'echet sequence space over $\Z$ in which $(e_n)_{n\in \Z}$ is a basis. Suppose that the weighted backward shift $B_w$ is an operator on $X$. If $B_w$ is recurrent then it is hypercyclic.
\end{theorem}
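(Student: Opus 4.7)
The plan is to verify that $B_w$ satisfies the Hypercyclicity Criterion on the dense set $X_{0}=\mathrm{span}\{e_{j}:j\in\Z\}$ of finitely supported vectors. Concretely, I will produce an increasing sequence $(n_{k})$ of positive integers such that for every $j\in\Z$:
\begin{itemize}
\item[(i)] $B_{w}^{n_{k}}e_{j}\to 0$ in $X$, and
\item[(ii)] there exist $u_{j,k}\in X$ with $B_{w}^{n_{k}}u_{j,k}=e_{j}$ and $u_{j,k}\to 0$ in $X$.
\end{itemize}
A standard diagonal argument reduces the task to producing, for each fixed $q\geq 1$, a single increasing sequence satisfying (i) and (ii) simultaneously for all $|j|\leq q$; this is what I aim for below.

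The crux is to use one carefully chosen recurrent vector to extract both conditions for all $|j|\leq q$ at once. Since $\mathrm{Rec}(B_{w})$ is dense and $z:=\sum_{|j|\leq q}e_{j}\in X$, choose a recurrent vector $y$ so close to $z$ that $y_{j}\neq 0$ for all $|j|\leq q$. By definition of recurrence there is an increasing sequence $(n_{k})$ with $B_{w}^{n_{k}}y\to y$ in $X$. A direct computation gives the basis expansion
$$
B_{w}^{n}y=\sum_{\ell\in\Z} P_{n}(\ell)\, y_{\ell+n}\, e_{\ell}, \qquad P_{n}(\ell):=\prod_{i=\ell+1}^{\ell+n}w_{i}.
$$

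For (ii) I essentially repeat the unilateral Chan--Seceleanu argument. Continuity of the coordinate functional at position $j$ yields $P_{n_{k}}(j)\,y_{j+n_{k}}\to y_{j}\neq 0$, so $P_{n_{k}}(j)\neq 0$ eventually and $y_{j+n_{k}}=y_{j}/P_{n_{k}}(j)+o(1/P_{n_{k}}(j))$. On the other hand $y_{j+n_{k}}\,e_{j+n_{k}}\to 0$ in $X$, being the $(j+n_{k})$-th term of the (two-sided) basis expansion of $y$. Combining these, $(1/P_{n_{k}}(j))\, e_{j+n_{k}}\to 0$; hence $u_{j,k}:=e_{j+n_{k}}/P_{n_{k}}(j)$ is the desired pre-image with $B_{w}^{n_{k}}u_{j,k}=e_{j}$ and $u_{j,k}\to 0$.

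For (i), which is the new ingredient in the bilateral setting, I apply the coordinate projection $\pi_{j-n_{k}}(x):=x_{j-n_{k}}\,e_{j-n_{k}}$ to $B_{w}^{n_{k}}y-y$. Because $(e_{n})_{n\in\Z}$ is a Schauder basis in the Fr\'echet sequence space $X$, the partial sum operators are equicontinuous by Banach--Steinhaus, and therefore so is the family of coordinate projections $(\pi_{\ell})_{\ell\in\Z}$. Applied to the null sequence $B_{w}^{n_{k}}y-y$ this gives
$$
\bigl(P_{n_{k}}(j-n_{k})\,y_{j}-y_{j-n_{k}}\bigr)\,e_{j-n_{k}}\to 0 \ \text{ in }X.
$$
The tail term $y_{j-n_{k}}\,e_{j-n_{k}}\to 0$ in $X$ (since $j-n_{k}\to-\infty$ and $y$ has a convergent basis expansion), so subtracting yields $P_{n_{k}}(j-n_{k})\,y_{j}\,e_{j-n_{k}}\to 0$, and finally, as $y_{j}\neq 0$,
$$
B_{w}^{n_{k}}e_{j}=P_{n_{k}}(j-n_{k})\,e_{j-n_{k}}\to 0,
$$
which is (i). A diagonal extraction over $q$ then produces a single $(n_{k})$ working for all $j\in\Z$, so $B_{w}$ satisfies the Hypercyclicity Criterion and is hypercyclic.

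The main obstacle is (i): in contrast to the unilateral case treated by Corollary~\ref{c-unishift} (where $B_{w}^{n}e_{j}=0$ eventually and (i) is automatic), here condition (i) must be extracted from $B_{w}^{n_{k}}y\to y$ at positions $j-n_{k}\to -\infty$ which migrate to infinity. The decisive tools are the equicontinuity of the coordinate projections (ensuring that the bound propagates uniformly as the projection index varies with $k$) together with the two-sided vanishing of the tails of the basis expansion of $y$; together these isolate the weight-product $P_{n_{k}}(j-n_{k})$ and force it to decay along $e_{j-n_{k}}$.
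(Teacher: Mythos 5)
Your argument is correct: from a recurrent vector chosen close to $\sum_{|j|\le q}e_j$ you extract both halves of the Hypercyclicity Criterion for all $|j|\le q$ at once --- the preimage half exactly as in the unilateral Chan--Seceleanu-type argument, and the forward half $B_w^{n_k}e_j=P_{n_k}(j-n_k)e_{j-n_k}\to 0$ via the equicontinuity of the coordinate projections (a standard consequence of Banach--Steinhaus applied to the partial-sum operators of the basis) together with the vanishing of the tails of the expansion of $y$ --- and the final diagonalization over $q$ is legitimate. This is in substance exactly the route the paper only indicates by reference (combining Costakis--Parissis's proof for $\ell^2(\Z)$ with the proof of the bilateral weighted-shift characterization in Grosse-Erdmann--Peris), so your contribution is to have written that sketch out as a self-contained verification of the Hypercyclicity Criterion.
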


This was proved by Costakis and Parissis \cite[Proposition 5.1]{CP} for the space $\ell^2(\Z)$. The general case can be shown by combining the proof of these authors with the one of \cite[Theorem 4.12(a)]{GrPe11}, adding a standard conjugacy argument. But, again, Chan and Seceleanu \cite{ChSe} have the stronger result that if a weighted shift on $\ell^p(\Z)$, $1\leq p<\infty$, admits an orbit with a non-zero limit point, then it is hypercyclic. In \cite{BG2}, this is extended to all Fr\'echet sequence spaces over $\Z$ in which $(e_n)_{n\in \Z}$ is an unconditional basis.

Several questions remain (see also Question \ref{q4}).

\begin{question}\label{q9}
Does the analogue of Corollary \ref{c-unishift} hold for frequent recurrence? Does the analogue of Theorem \ref{t-bilshift} hold for reiterative (upper frequent, frequent) recurrence?
\end{question}

Moreover, the work by Chan and Seceleanu might suggest that the existence of a single non-zero vector with some recurrence property implies some type of hypercyclicity. This is indeed the case for uniform recurrence.

\begin{theorem}\label{t-rechc}
Let $X$ be a Fr\'echet sequence space (over $\N$ or $\Z$) in which $(e_n)_n$ is an unconditional basis. Suppose that the (unilateral or bilateral) weighted backward shift $B_w$ is an operator on $X$. If $B_w$ admits a non-zero uniformly recurrent vector, then it is chaotic and therefore frequently hypercyclic.
\end{theorem}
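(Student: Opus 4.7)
The plan is to establish chaos of $B_w$; frequent hypercyclicity then follows from the well-known equivalence of chaos and frequent hypercyclicity for weighted backward shifts on Fréchet sequence spaces with unconditional basis. Since uniform recurrence implies recurrence, hypercyclicity of $B_w$ is immediate from Corollary \ref{c-unishift}(a) in the unilateral setting and from Theorem \ref{t-bilshift} in the bilateral setting. The heart of the argument is therefore to produce a dense set of periodic points from one non-zero uniformly recurrent vector.

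Fix $x = \sum_{k} x_k e_k$ non-zero uniformly recurrent, and pick an index $k_0$ with $x_{k_0} \ne 0$. Choosing a continuous seminorm $p$ on $X$ with $p(y) \ge |y_{k_0}|$, the return set $A := \{n \ge 0 : p(B_w^n x - x) < |x_{k_0}|/2\}$ is syndetic, with some gap bound $M$. Inspecting the $k_0$-th coordinate of $B_w^n x$ for $n \in A$ yields the central estimate
\[
\frac{1}{|w_{k_0+1} \cdots w_{k_0+n}|} \le \frac{2 |x_{k_0+n}|}{|x_{k_0}|}, \qquad n \in A.
\]
For $n \notin A$, let $n'(n) \in A$ be the smallest return time $\ge n$ and set $j(n) := n'(n) - n \in \{1, \ldots, M-1\}$. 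Combining the previous bound applied at $n'(n)$ with the telescoping identity $w_{k_0+n+1} \cdots w_{k_0+n'(n)} \, x_{k_0+n'(n)} = [B_w^{j(n)} x]_{k_0+n}$ promotes this to a uniform bound
\[
\frac{1}{|w_{k_0+1} \cdots w_{k_0+n}|} \le \frac{2}{|x_{k_0}|} \bigl| [B_w^{j(n)} x]_{k_0+n} \bigr|, \qquad n \ge 0,
\]
where by convention $B_w^0 x = x$ and $j(n) = 0$ when $n \in A$.

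Partition $\mathbb{N}_0 = \bigsqcup_{j=0}^{M-1} S_j$ with $S_j := \{n : j(n) = j\}$. For each $j$, the series $\sum_{n \in S_j} [B_w^j x]_{k_0+n} e_{k_0+n}$ is a sub-series of the convergent expansion of $B_w^j x \in X$ and therefore converges in $X$ by unconditionality of $(e_n)$; the comparison property of unconditional bases combined with the uniform bound above then gives convergence of $\sum_{n \in S_j} \frac{e_{k_0+n}}{w_{k_0+1} \cdots w_{k_0+n}}$ in $X$. Summing over the finitely many $j$ yields convergence of the full series $\sum_{n \ge 0} \frac{e_{k_0+n}}{w_{k_0+1} \cdots w_{k_0+n}}$ in $X$, and a standard shifting/rescaling argument extends this to every starting index $k$. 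Consequently, for every $k$ and $N \ge 1$ the sub-series $y^{(k,N)} := \sum_{j \ge 0} \frac{e_{k+jN}}{w_{k+1} \cdots w_{k+jN}}$ converges in $X$, and for $k \le N$ a direct computation shows that $y^{(k,N)}$ is a periodic point of $B_w$ of period $N$. Given any $v = \sum_{k=1}^K \alpha_k e_k$ in the dense subspace $\operatorname{span}\{e_k\}$, setting $y_N := \sum_{k=1}^K \alpha_k y^{(k,N)}$ for $N \ge K$ produces a periodic point of period $N$, and $y_N \to v$ as $N \to \infty$ because each tail $y^{(k,N)} - e_k = \sum_{j \ge 1} \frac{e_{k+jN}}{w_{k+1} \cdots w_{k+jN}}$ is a sub-series of the convergent tail $\sum_{n \ge N} \frac{e_{k+n}}{w_{k+1} \cdots w_{k+n}}$ and vanishes in $X$ by unconditionality. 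Hence periodic points are dense.

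The bilateral case follows the same pattern, but additionally requires convergence of the backward series $\sum_{n \ge 1} (w_{k_0-n+1} \cdots w_{k_0}) \, e_{k_0-n}$; this is obtained by an entirely symmetric argument using the identity $[B_w^n x]_{k_0-n} = (w_{k_0-n+1} \cdots w_{k_0}) \, x_{k_0}$ and the analogous partitioning trick. The main obstacle throughout is the step of converting a syndetic estimate — valid only along $A$ — into convergence of a series indexed by all of $\mathbb{N}_0$; this is resolved by exploiting that each $B_w^j x$ lies in $X$, which, through unconditionality of $(e_n)$, controls the finitely many ``off-$A$'' residue classes simultaneously.
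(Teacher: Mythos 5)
The paper itself offers no proof of Theorem \ref{t-rechc}: it quotes the unilateral case from Gal\'an et al.\ \cite{GMOP} and He et al.\ \cite{HHY18} and the bilateral case from the separate paper \cite{BG2}, so your attempt can only be judged on its own merits. Your unilateral argument is essentially correct and is the expected one: the single-coordinate estimate at $k_0$ along the syndetic return set, its promotion to all $n$ via the bounded gap and the finitely many fixed vectors $B_w^jx$, $0\le j\le M$, and the comparison with sub-series of their basis expansions (unconditionality) give convergence of $\sum_{n}(w_{k_0+1}\cdots w_{k_0+n})^{-1}e_{k_0+n}$, hence of $\sum_n (w_1\cdots w_n)^{-1}e_n$, which is exactly the characterization of chaos \cite[Theorem 4.8]{GrPe11}; your explicit periodic points are a correct substitute for citing that theorem. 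Two small repairs are needed, though. First, Corollary \ref{c-unishift}(a) and Theorem \ref{t-bilshift} require the \emph{operator} to be recurrent (a dense set of recurrent vectors); a single non-zero uniformly recurrent vector does not give this ``immediately'', so hypercyclicity should be concluded only after the dense set of periodic points (or directly from the series condition, which even yields mixing). Second, chaos and frequent hypercyclicity are \emph{not} equivalent for weighted shifts — Theorem \ref{t-dist}(d) rests on a frequently hypercyclic non-chaotic shift on $c_0(\N)$ — but only the implication chaos $\Rightarrow$ frequent hypercyclicity is needed, and that does hold here since the series converges unconditionally and the Frequent Hypercyclicity Criterion applies.

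The genuine gap is the bilateral case, which you dispose of in one sentence as ``entirely symmetric''; it is not, and this is precisely the half that the paper defers to \cite{BG2}. For a bilateral shift, chaos requires convergence of the backward series $\sum_{n\ge1}(w_{k_0-n+1}\cdots w_{k_0})e_{k_0-n}$ as well, and the mechanism of your forward argument does not transfer. Forward, the $n$-th term of the target series is dominated, coordinate by coordinate at the position $k_0+n$, by one of the finitely many \emph{fixed} convergent expansions $B_w^jx$, $0\le j\le M$; that is what unconditionality converts into convergence. Backward, the identity $[B_w^nx]_{k_0-n}=(w_{k_0-n+1}\cdots w_{k_0})\,x_{k_0}$ exhibits the $n$-th backward product as a coordinate of the \emph{moving} vector $B_w^nx$ at the moving position $k_0-n$; from $B_w^nx\in x+W$, with $W=\{y: q(y)<\delta\}$, equicontinuity of the coordinate projections only yields, for $n$ in the ($W$-dependent) return set, an estimate of the form $|w_{k_0-n+1}\cdots w_{k_0}|\,|x_{k_0}|\,p(e_{k_0-n})\le |x_{k_0-n}|\,p(e_{k_0-n})+C\delta$, with an additive error that does not decay in $n$ — enough to make single terms small, but nowhere near a coordinatewise domination by a convergent series. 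The gap-filling step breaks down as well: passing from a backward product of length $n$ to one of length $n+j$ multiplies by $j$ weights at positions near $k_0-n\to-\infty$, which need not be bounded, whereas in the forward case these correction factors were absorbed into coordinates of $B_w^jx$. As written, your proposal therefore proves only the unilateral statement; the bilateral statement needs a genuinely different argument.
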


For unilateral shifts, this result is due to Gal\'an et al. \cite[Theorem 2, Corollary 1 and following Remark]{GMOP}; indeed, their statement is more restrictive, but they actually prove the full result, which was also obtained in He et al.~\cite[Corollary 4.2]{HHY18}. A special case is due to Grivaux et al. \cite[Remark 5.21]{GriMaMe}. For bilateral shifts, the result was obtained by the first two authors \cite{BG2}.

The previous theorem can be considerably improved if the underlying space is $\ell^p$. The following is a consequence of \cite{BG2}, using an idea of B\`es et al.~\cite{BMPP16}. Note that it does not hold on $c_0(\mathbb{N})$ by \cite[Theorem 5]{BaRu15}.

\begin{theorem}\label{t-reitshift}
Let $B_w$ be a weighted backward shift on $\ell^p(\N)$ or $\ell^p(\Z)$, $1\leq p<\infty$. If $B_w$ admits a non-zero reiteratively recurrent vector, then it is chaotic and therefore frequently hypercyclic.
\end{theorem}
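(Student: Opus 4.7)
My plan is to derive the classical weight-summability condition characterizing chaos (equivalently, frequent hypercyclicity, by the Bayart--Grivaux theorem) for $B_w$ on $\ell^p(\N)$, namely
\[
(\star)\qquad \sum_{n=1}^\infty\frac{1}{|w_2 w_3\cdots w_{n+1}|^p}<\infty.
\]
Once $(\star)$ is established, $B_w$ admits a dense set of periodic points, is automatically hypercyclic, and hence is chaotic, from which frequent hypercyclicity follows. The bilateral case on $\ell^p(\Z)$ is analogous.

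First I would normalize. Let $x$ be a non-zero reiteratively recurrent vector and let $j$ be the smallest index with $x_j\neq 0$. Since $B_w^{j-1}$ is continuous, preimages of neighbourhoods of $B_w^{j-1}x$ are neighbourhoods of $x$; hence the return-set of $x$ at a given neighbourhood is contained in the corresponding return-set of $B_w^{j-1}x$, so $B_w^{j-1}x$ is also reiteratively recurrent. Its first coordinate equals $w_2\cdots w_j\,x_j\neq 0$, so, replacing $x$ by $B_w^{j-1}x$, I may assume $x_1\neq 0$.

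Next I would extract a partial form of $(\star)$. For each $\varepsilon\in (0,|x_1|/2)$, the return set $A_\varepsilon:=\{n\geq 0:\|B_w^n x-x\|_p<\varepsilon\}$ has positive upper Banach density. For $n\in A_\varepsilon$, projecting onto the first coordinate gives $|w_2\cdots w_{n+1}|\,|x_{n+1}|\geq |x_1|/2$, whence
\[
\sum_{n\in A_\varepsilon}\frac{1}{|w_2\cdots w_{n+1}|^p}\leq \Big(\frac{2}{|x_1|}\Big)^{\!p}\|x\|_p^p,
\]
a bound independent of $\varepsilon$. This is the partial version of $(\star)$ valid only along $A_\varepsilon$.

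The main obstacle will be upgrading this partial summability to the full series $(\star)$. For arbitrary non-negative sequences such an upgrade is false, and indeed the analogue of the theorem fails on $c_0$ by \cite{BaRu15}. The upgrade relies on (i) the $\ell^p$-geometry; (ii) the multiplicative structure of $c_n:=1/|w_2\cdots w_{n+1}|^p$; and, most importantly, (iii) that reiterative recurrence yields constraints at \emph{every} coordinate of $x$, not just the first: for $n\in A_\varepsilon$ the $k$-th coordinate of $B_w^n x-x$ produces a two-sided estimate on $|x_{k+n}|$ if $x_k\neq 0$ and the upper bound $|x_{k+n}|<\varepsilon/|w_{k+1}\cdots w_{k+n}|$ if $x_k=0$, i.e.\ estimates on shifted weight products transported through the translates $k+A_\varepsilon$. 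Combining these estimates with a pigeonhole/shifting argument over the translates of $A_\varepsilon$, in the spirit of the proof of B\`es--Menet--Peris--Puig \cite{BMPP16} that reiterative hypercyclicity implies frequent hypercyclicity for weighted shifts on $\ell^p$, would fill in the missing indices and yield the full summability $(\star)$. The rigorous execution of this upgrade is the content of the forthcoming paper \cite{BG2}.
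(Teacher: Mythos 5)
First, for context: the paper does not actually prove this theorem. Section~\ref{s-recshift} states explicitly that, apart from Theorem~\ref{t-dist}, it contains no proofs, and Theorem~\ref{t-reitshift} is presented there as a consequence of the forthcoming paper \cite{BG2}, ``using an idea of B\`es et al.~\cite{BMPP16}''. Your write-up ultimately lands in the same place: the steps you do carry out --- replacing $x$ by $B_w^{j-1}x$ so that $x_1\neq 0$ (correct, since the image of a reiteratively recurrent vector under the operator is again reiteratively recurrent), and the first-coordinate estimate giving $\sum_{n\in A_\varepsilon}|w_2\cdots w_{n+1}|^{-p}\leq (2/|x_1|)^p\|x\|_p^p$ for the return set $A_\varepsilon$, which has positive upper Banach density --- are correct but routine, and for the decisive step you write that its rigorous execution ``is the content of \cite{BG2}''. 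So, judged as a proof, the proposal has a genuine gap; judged as an account of where the result comes from, it coincides with what the paper itself offers.

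The gap is not a formality: the upgrade from summability of $c_n:=|w_2\cdots w_{n+1}|^{-p}$ along a set of positive upper Banach density to summability over all of $\N$ \emph{is} essentially the theorem, and it cannot be extracted from your first-coordinate estimate alone. Boundedness of $B_w$ only forces $c_{n+1}\geq \|B_w\|^{-p}c_n$, so one can construct bounded weight sequences for which $c_n$ is summably small on a union of longer and longer intervals (a set of upper Banach density one) while $c_n$ climbs above $1$ inside the unbounded gaps between those intervals, so that $\sum_n c_n=\infty$; hence the partial summability you derive does not imply $(\star)$, and the additional information you list under (iii) --- the estimates at all coordinates of $x$, in particular the bounds $|x_{k+n}|<\varepsilon/|w_{k+1}\cdots w_{k+n}|$, compared across two return times $n<m\in A_\varepsilon$ so as to control weight products indexed by the differences $m-n$ --- is exactly what must be organized into a convergent argument, in the spirit of \cite{BaRu15} and \cite{BMPP16}. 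That organization is delicate (the statement fails on $c_0(\N)$, so the $\ell^p$-summability of the coordinates of $x$ must enter quantitatively), and it is precisely what \cite{BG2} supplies and what is missing from your sketch.
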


As we said above we do not know whether, for general Fr\'echet sequence spaces, the existence of a single non-zero frequently recurrent vector, say, implies that the shift is frequently hypercyclic. Luckily, for the main result of this section, we only need the following  weaker implication.

\begin{lemma}
Let $X$ be a Fr\'echet sequence space over $\N$ in which $(e_n)_n$ is an unconditional basis. Suppose that the weighted backward shift $B_w$ is an operator on $X$. If there exists a non-zero vector that is \textit{frequently recurrent} (\textit{upper frequently recurrent}, \textit{reiteratively recurrent}) then there is a set $A\subset\N$ of positive lower density (positive upper density, positive upper Banach density, respectively) such that
\[
\sum_{n\in A} \frac{1}{\prod_{\nu=1}^{n} w_\nu}e_{n} \quad \text{converges in $X$}.
\]
\end{lemma}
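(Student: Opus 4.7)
The strategy is to extract from the frequent recurrence of a single non-zero vector $x$ enough growth control on one of its coordinates to rescale the basis expansion into the desired series.

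Let $x=(x_k)_{k\geq 1}$ be the given non-zero vector, say frequently recurrent for $B_w$, and set $k_0=\min\{k\geq 1 : x_k\neq 0\}$. First I would choose the open neighbourhood $U=\{y\in X : |y_{k_0}-x_{k_0}|<|x_{k_0}|/2\}$ of $x$, which is open by continuity of the $k_0$-th coordinate functional. By hypothesis $A_0:=N(x,U)$ has positive lower density.

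Next, using $(B_w^n x)_{k_0}=\bigl(\prod_{j=1}^n w_{k_0+j}\bigr)x_{k_0+n}$ together with $|(B_w^n x)_{k_0}|\geq |x_{k_0}|/2$ for $n\in A_0$, I would obtain the coordinate lower bound
\[
|x_{k_0+n}|\;\geq\;\frac{|x_{k_0}|/2}{\bigl|\prod_{j=1}^n w_{k_0+j}\bigr|}\;=\;\frac{c}{\bigl|\prod_{\nu=1}^{k_0+n}w_\nu\bigr|},\qquad n\in A_0,
\]
with $c:=\tfrac{1}{2}|x_{k_0}|\cdot\bigl|\prod_{\nu=1}^{k_0}w_\nu\bigr|>0$. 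Setting $A:=k_0+A_0\subset\N$, translation invariance of lower density gives that $A$ has positive lower density.

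Finally, I would invoke the unconditional basis property: for any bounded scalar sequence $(\mu_m)$ the diagonal multiplier $\sum_m x_m e_m\mapsto \sum_m \mu_m x_m e_m$ is a well-defined continuous operator on $X$. Taking $\mu_m=1/\bigl(x_m\prod_{\nu=1}^m w_\nu\bigr)$ for $m\in A$ and $\mu_m=0$ otherwise, the preceding bound yields $|\mu_m|\leq 1/c$, and applying the multiplier to the expansion $x=\sum_m x_m e_m$ produces the convergent series $\sum_{m\in A}\tfrac{1}{\prod_{\nu=1}^m w_\nu}\,e_m$. The identical argument works for upper frequent recurrence and reiterative recurrence, since upper density and upper Banach density are likewise invariant under the shift $A_0\mapsto k_0+A_0$. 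The only step requiring some care is the Fréchet-space multiplier fact, which is standard (closed graph theorem once one knows every subseries of the expansion converges) and is the unique place where unconditionality of the basis enters essentially; the rest of the argument is just bookkeeping.
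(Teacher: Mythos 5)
Your argument is correct. A small remark on presentation: the multiplier fact you invoke does not really need the closed graph theorem or continuity of the multiplication operator; all you need is the standard solidity/bounded-multiplier property of an unconditional basis in a (complete) Fr\'echet sequence space, namely that if $\sum_m x_m e_m$ converges and $|a_m|\le C\,|x_m|$ for all $m$, then $\sum_m a_m e_m$ converges (this is exactly \cite[Lemma 4.2]{GrPe11}), which applies directly since your coordinate estimate gives $\bigl|1/\prod_{\nu=1}^m w_\nu\bigr|\le |x_m|/c$ for $m\in A$.

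Regarding the comparison with the paper: the paper does not prove the lemma internally; it deduces it from an external result, \cite[Lemma 5]{HHY18} (see also \cite{BG2}), which converts the existence of an orbit limit point of $B_w$ along a set of times into convergence of the corresponding rescaled series of basis vectors. Your proof is a direct, self-contained version of that mechanism, specialised to a recurrent vector: you pick the first non-zero coordinate $x_{k_0}$, use continuity of the coordinate functional to build the neighbourhood, read off from $(B_w^nx)_{k_0}=\bigl(\prod_{j=1}^n w_{k_0+j}\bigr)x_{k_0+n}$ the lower bound on $|x_{k_0+n}|$ along the return set, observe that the relevant density notions are translation invariant, and then let unconditionality of the basis do the rest. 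This is essentially the same idea as in the cited lemma, but written out explicitly; what your route buys is independence from the external reference (and it makes transparent that the only use of unconditionality is the bounded-multiplier/solidity property), while the paper's route buys brevity and a statement that also covers the more general orbit-limit-point situation used elsewhere in that section.
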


This follows from He et al.~\cite[Lemma 5]{HHY18}; see also \cite{BG2}. The lemma allows us to prove the following.

\begin{theorem}\label{t-dist}
\begin{itemize}
\item[\emph{(a)}] There is a hypercyclic operator without non-zero reiteratively recurrent vectors. The operator may even be mixing.
\item[\emph{(b)}] There is a reiteratively hypercyclic operator without non-zero upper frequently recurrent vectors.
\item[\emph{(c)}] There is an upper frequently hypercyclic operator without non-zero frequently recurrent vectors.
\item[\emph{(d)}] There is a frequently hypercyclic operator without non-zero uniformly recurrent vectors.
\end{itemize}
\end{theorem}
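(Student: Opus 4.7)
The plan is to construct, for each part, a weighted backward shift $B_w$ on a suitable Fr\'echet sequence space with unconditional basis, so that the preceding lemma (for parts (a)--(c)) or Theorem~\ref{t-rechc} (for (d)) rules out the stronger form of recurrence. Set $v_n := 1/\prod_{\nu=1}^n w_\nu$ as in the lemma. For parts~(b), (c), and~(d) I would work on $c_0(\N)$ with weights drawn from $\{\tfrac12, 1, 2\}$, building $v_n$ out of successive ``blocks'': block $k$ performs a descent of $v$ from $1$ down to $2^{-L_k}$ over $L_k$ consecutive coordinates (weights $2$), followed by a symmetric ascent back to $1$ (weights $\tfrac12$); between the blocks, $v$ is identically~$1$. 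Fix $L_k \to \infty$. The \emph{placement} of the blocks governs the density of the ``deep'' set $A_{\mathrm{deep}} := \{n : v_n < 1/2\}$: regular spacing gives positive lower density, for~(d); alternating long bursts of contiguous blocks with long empty stretches gives positive upper but zero lower density, for~(c); extremely sparse blocks (e.g.\ $a_k = 2^{2^k}$) give positive upper Banach but zero upper density, for~(b). In each case the Frequent, Upper Frequent, or Reiterative Hypercyclicity Criterion applied to $A_{\mathrm{deep}}$ gives the claimed hypercyclicity, because $\sum_{n \in A_{\mathrm{deep}}} v_n e_n$ converges in $c_0$ as soon as $v_n \to 0$ along $A_{\mathrm{deep}}$.

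For the negative direction in~(b) and~(c), suppose a non-zero vector is frequently (resp.\ upper frequently, reiteratively) recurrent. By the lemma there exists $A' \subset \N$ of positive lower (resp.\ upper, upper Banach) density with $\sum_{n \in A'} v_n e_n$ convergent in $c_0$, i.e.\ $v_n \to 0$ along $A'$. Since $v_n \equiv 1$ outside the union $A$ of the blocks, the set $A' \setminus A$ must be finite, hence $A'$ has at most the corresponding density of~$A$, which has been arranged to be zero at that level: contradiction. For~(d) I invoke Theorem~\ref{t-rechc}: a non-zero uniformly recurrent vector would force $B_w$ to be chaotic, whereas chaoticity on $c_0$ requires $v_n \to 0$ globally, failing because $v_n = 1$ between blocks.

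The main obstacle is part~(a). On $c_0(\N)$ or $\ell^p(\N)$ with bounded weights, hypercyclicity forces $v_{n_k} \to 0$ along some subsequence; the bounded-weight constraint then propagates smallness to a window of length $\sim \log_2(1/v_{n_k})$ around each $n_k$, producing a set of positive upper Banach density on which $v_n \to 0$, and hence, via the lemma, a non-zero reiteratively recurrent vector. One must therefore leave the class of weighted shifts on standard Banach sequence spaces. For the bare ``hypercyclic'' claim in~(a) I would use a weighted shift on a Fr\'echet sequence space that admits unbounded weights and in which convergence of $\sum_{n \in A} v_n e_n$ is so restrictive that unsmoothed, isolated dips of $v$ at a very sparse set of positions (e.g.\ $\{2^k\}$) cannot yield a convergent series along any $A$ of positive upper Banach density. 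The ``even mixing'' strengthening is strictly beyond the shift framework (mixing on a standard sequence space forces $v_n \to 0$ globally, from which one readily builds a convergent $\sum_{n \in A} v_n e_n$ over a set $A = \bigcup_k [a_k, a_k + k]$ of positive upper Banach density by choosing $a_k$ large enough that $\max_{n \in [a_k, a_k + k]} v_n < 2^{-k}$); it requires a non-shift construction, obtained by a careful refinement of the mixing non-reiteratively-hypercyclic operator of B\`es--Menet--Peris--Puig \cite[Theorem~13]{BMPP16}, with parameters tuned so that no non-zero vector is reiteratively recurrent. Verifying this last property is the main technical difficulty.
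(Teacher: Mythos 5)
Your negative directions for (b)--(d) follow the paper's strategy, but the proposal has two genuine gaps, the more serious one being part (a), where your reasoning inverts the lemma. The lemma is a one-way implication: a non-zero frequently (upper frequently, reiteratively) recurrent vector yields a set $A$ in the corresponding family such that $\sum_{n\in A}v_ne_n$ converges. You repeatedly use the converse -- that convergence of such a subseries over a set of positive upper Banach density produces a non-zero reiteratively recurrent vector -- to argue that hypercyclic or mixing weighted shifts on $\ell^p$ or $c_0$ always admit such vectors, and hence that (a) forces you outside the class of shifts on classical sequence spaces. That converse is false, and the paper's own witness for (a) refutes it: the shift with $w_n=\frac{n+1}{n}$ on $\ell^1(\N)$ is mixing and has $v_n=\frac{1}{n+1}\to 0$, so there are plenty of sets of positive upper Banach density along which $\sum_{n\in A}v_ne_n$ converges; yet by Theorem \ref{t-reitshift} a non-zero reiteratively recurrent vector would force this shift to be chaotic, hence frequently and reiteratively hypercyclic, contradicting \cite[Theorem 13]{BMPP16} (or simply the divergence of $\sum_n\frac{1}{n+1}$). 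Thus the part you defer to ``a careful refinement'' of \cite{BMPP16}, whose key property you admit you have not verified, is in fact the easiest part of the theorem once Theorem \ref{t-reitshift} is available, and your stated obstruction to the shift approach is incorrect.

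For (b)--(d), the exclusions of the stronger recurrence (via the lemma for (b),(c) and via Theorem \ref{t-rechc} for (d)) are sound and coincide with the paper's method, but the positive halves are unproved. There is no ``Frequent/Upper Frequent/Reiterative Hypercyclicity Criterion'' of the form you invoke, namely that convergence of $\sum_{n\in A_{\mathrm{deep}}}v_ne_n$ for a single set in the family yields the corresponding hypercyclicity; the analogous statement for reiterative hypercyclicity on $\ell^p$ is false by the same $\ell^1$ example, and for frequent hypercyclicity on $c_0$ the actual characterization of Bayart--Ruzsa \cite{BaRu15} imposes conditions on the weights at points of \emph{difference sets} of the return sets. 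Since a set of positive lower density has a syndetic difference set, those differences necessarily meet the long stretches where your $v\equiv 1$, so the block design you propose for (d) is unlikely to be frequently hypercyclic at all, and in any case this is exactly the hard point: a frequently hypercyclic non-chaotic shift on $c_0$ is the non-trivial construction of \cite[Corollary 5.2]{BaGr07}, which the paper simply cites, just as it handles (b) and (c) by citing \cite[Theorem 7]{BMPP16} and \cite[Theorem 5]{BaRu15} (see also \cite{BG}), whose proofs moreover supply the stronger negative weight property (no set of positive upper density along which $\prod_{\nu\le n}w_\nu\to\infty$) needed for the recurrence exclusion.
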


In view of Corollary \ref{c-unifnotper} we may complete this list by the following assertion; note that, by Theorem \ref{t-reitshift}, such an operator cannot be a weighted shift on $\ell^p(\N)$ or $\ell^p(\Z)$, $1\leq p<\infty$.
\begin{itemize}
\item[(e)] \textit{There is a hypercyclic uniformly recurrent operator without non-zero periodic points.}
\end{itemize}

\begin{proof}[Proof of Theorem \ref{t-dist}]
(a) By \cite[Theorem 13]{BMPP16}, the weighted backward shift on $\ell^1(\N)$ with weight sequence $w=(\frac{n+1}{n})_n$ is mixing but not reiteratively hypercyclic. In view of Theorem \ref{t-reitshift} it cannot have a non-zero reiteratively recurrent vector.

(b) By \cite[Theorem 7]{BMPP16} (see also \cite[Theorem 7.1]{BG} for a simplified proof) there exists a reiteratively hypercyclic weighted backward shift $B_w$ on $c_0(\N)$ that is not upper frequently hypercyclic. In fact, the proofs show that the weight even satisfies that there is no set $A\subset \N$ of positive upper density such that
\[
\prod_{\nu=1}^{n} w_\nu \to \infty\quad \text{as $n\to\infty$, $n\in A$.}
\]
Thus, by the lemma, $B_w$ cannot have a non-zero upper frequently recurrent vector.

(c) This follows exactly as in (b), using \cite[Theorem 5]{BaRu15} or \cite[Theorem 7.2]{BG} and their proofs.

(d) By \cite[Corollary 5.2]{BaGr07} (see also \cite[Theorem 7.3]{BG}) there exists a frequently hypercyclic weighted backward shift on $c_0(\N)$ that is not chaotic. In view of Theorem \ref{t-rechc}, this operator cannot have non-zero uniformly recurrent vectors.
\end{proof}

\begin{corollary}\label{c-dist}
\begin{itemize}
\item[\emph{(a)}] There is a recurrent operator without non-zero reiteratively recurrent vectors.
\item[\emph{(b)}] There is a reiteratively recurrent operator without non-zero upper frequently recurrent vectors.
\item[\emph{(c)}] There is an upper frequently recurrent operator without non-zero frequently recurrent vectors.
\item[\emph{(d)}] There is a frequently recurrent operator without non-zero uniformly recurrent vectors.
\item[\emph{(e)}] There is a uniformly recurrent operator without non-zero periodic points.

\end{itemize}
\end{corollary}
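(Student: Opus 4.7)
The plan is to deduce each assertion directly from the correspondingly labelled assertion of Theorem~\ref{t-dist}, with part~(e) coming from Corollary~\ref{c-unifnotper}, by invoking the elementary observation that every flavour of hypercyclicity entails the corresponding flavour of recurrence.

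First I would record the trivial inclusions
\[
\mathrm{HC}(T)\subset \mathrm{Rec}(T),\ \mathrm{RHC}(T)\subset \mathrm{RRec}(T),\ \mathrm{UFHC}(T)\subset \mathrm{UFRec}(T),\ \mathrm{FHC}(T)\subset \mathrm{FRec}(T),
\]
(using suggestive abbreviations for the sets of hypercyclic vectors of each flavour). These follow straight from the definitions: the density condition on $N(x,U)$ required for hypercyclicity of any flavour is in particular required for neighbourhoods $U$ of $x$, which is exactly what it means for $x$ to be recurrent in the corresponding sense.

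Next I would check that, for each operator produced by Theorem~\ref{t-dist}, the appropriate set of hypercyclic-type vectors is already dense in $X$, so that the inclusions above force the desired density of the corresponding recurrent set. For (a), $\mathrm{HC}(T)$ is dense for any hypercyclic $T$. For (b), Theorem~\ref{reit-hyp} gives that every hypercyclic vector of a reiteratively hypercyclic operator is reiteratively hypercyclic, so $\mathrm{RHC}(T)$ is residual. For (c), the set of upper frequently hypercyclic vectors is residual whenever non-empty by the result of Bayart--Ruzsa \cite{BaRu15} recalled after Theorem~\ref{ufreq-hyp}. For (d), the orbit of any frequently hypercyclic vector consists of frequently hypercyclic vectors and is dense, so $\mathrm{FHC}(T)$ is dense. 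Combining these density statements with the previous inclusions, the operators supplied by Theorem~\ref{t-dist}(a)--(d) are, respectively, recurrent, reiteratively recurrent, upper frequently recurrent, and frequently recurrent, while still admitting no non-zero vector that is recurrent in the next stronger sense; this settles (a)--(d).

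For (e), I would simply invoke Corollary~\ref{c-unifnotper}: it furnishes a hypercyclic Hilbert space operator with a dense set of uniformly recurrent vectors (hence itself uniformly recurrent) and no non-zero periodic points. There is no real obstacle in any of this: the entire content of the corollary is the packaging of examples already constructed in Theorem~\ref{t-dist} and Corollary~\ref{c-unifnotper}, reduced via the trivial implication ``hypercyclic $\Rightarrow$ recurrent (in the same sense)'' together with density of the corresponding sets of hypercyclic-type vectors.
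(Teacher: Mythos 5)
Your proposal is correct and is essentially the paper's own (unstated) argument: the corollary is read off from Theorem \ref{t-dist} together with Corollary \ref{c-unifnotper} via the trivial implication that each flavour of hypercyclicity yields the corresponding flavour of recurrence, combined with the density of the relevant sets of hypercyclic-type vectors, exactly as you organize it. One tiny caveat: the inclusion of the hypercyclic vectors in $\text{Rec}(T)$ is not literally ``the hypercyclicity condition applied to neighbourhoods of $x$'' (since $0\in N(x,U)$ always, one needs infinitely many positive return times), but it follows from the standard fact that every tail $\{T^nx : n\ge m\}$ of a hypercyclic orbit is dense, so this does not affect the argument.
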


\section{${\text{IP}}^*$-recurrence}\label{s-iprec}

In the next section we will discuss recurrence properties of further operators. As we will see, a rich supply of eigenvectors to unimodular eigenvalues allows us not only to deduce uniform recurrence for many of these operators, but even a stronger notion that is defined in terms of the so-called $\text{IP}^*$-sets. Before turning to these examples we will therefore study ${\text{IP}}^*$-recurrence in this section.

The starting point is the family $\mathcal{IP}$ of $\text{IP}$-sets. As mentioned in \cite[p. 52]{Fur3}, this family arises naturally when one studies the structure of the sets of integers that can serve as the set of recurrence times for some point in the system. We recall that $A\subset \mathbb{N}_0$ is an $\text{IP}$-set if there exists a strictly increasing sequence $(k_n)_{n\in \mathbb{N}}$ of positive integers such that $k_{j_1}+\dots +k_{j_m}\in A$ whenever $j_1<\dots <j_m$ and $m\in\mathbb{N}$. Then a vector $x\in X$ is called $\text{IP}$-recurrent for an operator $T\in L(X)$ if, for any neighbourhood $U$ of $x$, the return set $N(x,U)$ is an $\text{IP}$-set. But it follows from \cite[Theorem 2.17]{Fur3} that, in our setting, every recurrent vector satisfies this property, so that the notions of recurrence and $\text{IP}$-recurrence coincide.

It is more interesting to study the dual family $\mathcal{IP}^*$, that is, the family of all subsets of $\mathbb{N}_0$ that intersect every set in $\mathcal{IP}$ non-trivially. The elements of this family are called $\text{IP}^*$-sets. A vector $x\in X$ is called $\text{IP}^*$-recurrent for $T\in L(X)$ if, for any neighbourhood $U$ of $x$, the return set $N(x,U)$ is an $\text{IP}^*$-set, see \cite[Chapter 9]{Fur3}. The corresponding set of vectors is denoted by $\text{IP}^*\text{Rec}(T)$. If this set is dense in $X$ then the operator $T$ is called $\text{IP}^*$-recurrent.

It is known that $k\mathbb{N}_0=\{ kn : n\in\mathbb{N}_0\}$ is an $\text{IP}^*$-set for every $k\in\mathbb{N}$ (see the argument in the case $k=2$ in \cite[p. 178]{Fur3}) and that every $\text{IP}^*$-set is syndetic, see \cite[Lemma 9.2]{Fur3}. This implies that
\begin{equation}\label{eq-pipu}
\text{Per}(T)\subset \text{IP}^*\text{Rec}(T)\subset \text{URec}(T),
\end{equation}
and every $\text{IP}^*$-recurrent operator is uniformly recurrent.

The notion of $\text{IP}^*$-recurrence in a linear context has already been studied by Gal\'an et al. \cite {GMOP}. It was observed there that, thanks to a classical result of Furstenberg \cite[Theorem 9.11]{Fur3}, $\text{IP}^*$-recurrent vectors of an operator $T\in L(X)$ coincide with product recurrent vectors, that is, those vectors $x\in X$ such that, for any operator $S\in L(Y)$ on a Fr\'echet space $Y$ and for any recurrent vector $y$ for $S$, the vector $(x,y)$ is recurrent for $T\times S$.

Note also that the vector $x$ with unbounded orbit constructed in Example~\ref{ex_urec_unbounded} or the one in Example~1 of \cite{GMOP} are $\text{IP}^*$-recurrent since, for every neighbourhood $U$ of $x$, we have that $k\N_0\subset N(x,U)$ for some $k\in\N$.

An important property of the family $\mathcal{IP}$ is that it is partition regular, that is, if $A_1\cup A_2\in\mathcal{IP}$, then either $A_1\in\mathcal{IP}$ or $A_2\in\mathcal{IP}$; this immediately implies have that its dual family $\mathcal{IP}^*$ is a filter, see \cite[Lemma 9.5]{Fur3}.

\begin{theorem} Let $T\in L(X)$. Then $\text{\emph{IP}}^*\text{\emph{Rec}}(T)$ is a linear subspace of $X$. In particular, if $T$ is $\text{\emph{IP}}^*$-recurrent, then $T$ admits a dense linear manifold of $\text{\emph{IP}}^*$-recurrent vectors.
\end{theorem}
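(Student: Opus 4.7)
The plan is to exploit two facts already recorded in the excerpt: the family $\mathcal{IP}^*$ is a filter (closed under finite intersections and supersets, by partition regularity of $\mathcal{IP}$), and the vector-space operations on $X$ are jointly continuous. Linearity of $\text{IP}^*\text{Rec}(T)$ will then fall out essentially by bookkeeping on return sets, and the ``in particular'' statement will be an immediate corollary.

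First I would dispatch the trivial cases. The zero vector satisfies $T^n 0 = 0$, so $N(0,U) = \mathbb{N}_0$ for every neighbourhood $U$ of $0$, and $\mathbb{N}_0$ is trivially an $\text{IP}^*$-set. For closure under scalar multiplication, given $x \in \text{IP}^*\text{Rec}(T)$ and $\lambda \in \mathbb{K}$ with $\lambda \neq 0$, I would observe that for any neighbourhood $U$ of $\lambda x$ the set $\lambda^{-1}U$ is a neighbourhood of $x$, and the identity $T^n(\lambda x) = \lambda T^n x$ yields
\[
N(\lambda x, U) = N(x, \lambda^{-1} U),
\]
which is an $\text{IP}^*$-set by hypothesis on $x$; the case $\lambda = 0$ reduces to the zero-vector case.

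The heart of the argument is closure under addition. Given $x, y \in \text{IP}^*\text{Rec}(T)$ and a neighbourhood $U$ of $x+y$, continuity of addition provides neighbourhoods $V$ of $x$ and $W$ of $y$ with $V+W \subset U$. Then
\[
N(x,V) \cap N(y,W) \subset N(x+y, U),
\]
since $T^n x \in V$ and $T^n y \in W$ force $T^n(x+y) \in V+W \subset U$. By hypothesis both $N(x,V)$ and $N(y,W)$ belong to $\mathcal{IP}^*$, so their intersection is in $\mathcal{IP}^*$ as well, and therefore the larger set $N(x+y, U)$ is in $\mathcal{IP}^*$. Hence $x+y \in \text{IP}^*\text{Rec}(T)$.

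There is no real obstacle here; the only nontrivial input is the filter property, which is precisely what the excerpt just established. For the ``in particular'' clause: if $T$ is $\text{IP}^*$-recurrent, then $\text{IP}^*\text{Rec}(T)$ is dense by definition and is a linear subspace by the first part, hence a dense linear manifold of $\text{IP}^*$-recurrent vectors.
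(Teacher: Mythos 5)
Your proof is correct and follows essentially the same route as the paper: both rest on continuity of the vector space operations together with the filter property of $\mathcal{IP}^*$, the only cosmetic difference being that you treat scalar multiples and sums separately where the paper handles a general linear combination $\lambda_1 x_1+\lambda_2 x_2$ in one step.
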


\begin{proof}
Let $x_1,x_2\in \text{IP}^*\text{Rec}(T)$, and let $\lambda_1, \lambda_2$ be scalars. Given an arbitrary neighbourhood $U$ of $x:=\lambda_1 x_1+\lambda_2 x_2$, we fix neighbourhoods $U_j$ of $x_j$, $j=1,2$, such that
\[
\lambda_1U_1+\lambda_2 U_2\subset U.
\]
Therefore we conclude, by the filter property of  $\mathcal{IP}^*$, that
\[
N(x,U)\supset N(x_1,U_1)\cap N(x_2,U_2)\in \mathcal{IP}^*,
\]
the second part being a consequence of the definition of $\mathcal{IP}^*$-recurrence.
\end{proof}

We next obtain that, for power bounded operators, uniform recurrence and $\text{IP}^*$-recurr\-ence coincide. To do this we need to recall the concept of proximality: Given a dynamical system $(X,T)$, where $(X,d)$ is a metric space, we say that two points $x,y\in X$ are proximal for $T$ if there exists an increasing sequence $(n_k)_{k\in \N}$ of positive integers such that $d(T^{n_k}x,T^{n_k}y)\to 0$ as $k\to\infty$.

\begin{theorem}\label{t-pbdd}
Let $T\in L(X)$. If $T$ is power bounded, then
\[
\text{\emph{IP}}^*\text{\emph{Rec}}(T)=\text{\emph{URec}}(T).
\]
\end{theorem}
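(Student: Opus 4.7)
The inclusion $\text{IP}^*\text{Rec}(T) \subset \text{URec}(T)$ is already part of \eqref{eq-pipu}, so the plan is to prove the converse. Fix $x \in \text{URec}(T)$ and consider the orbit closure $K := \overline{\{T^n x : n \geq 0\}}$. The strategy is to verify that $(K, T|_K)$ is a compact minimal equicontinuous dynamical system, deduce that $x$ is a distal point, and then invoke Furstenberg's theorem identifying distal points with $\text{IP}^*$-recurrent ones.

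By Theorem \ref{t-power2}, $K$ is compact, and as a subset of the metrizable Fréchet space $X$ it carries a compatible metric $d$. Uniform recurrence of $x$, combined with Furstenberg's classical result \cite[Theorem 1.17]{Fur3}, ensures that $K$ is minimal under $T$; in particular $T(K) = K$, so $T|_K$ is a continuous surjection of $K$ onto itself. Power boundedness of $T$ makes the family $\{T^n\}_{n \geq 0}$ equicontinuous on $X$, and this property descends to the restriction on $K$.

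I would then invoke the classical fact that every compact equicontinuous system is distal, i.e., no two distinct points of $K$ are proximal for $T|_K$. A hands-on way to see this within our setting is to replace $d$ by the $T$-invariant metric
\[
\tilde d(a, b) := \sup_{n \geq 0} d(T^n a, T^n b),
\]
whose finiteness and topological equivalence to $d$ follow from equicontinuity together with compactness of $K$, and for which $T|_K$ is manifestly non-expansive. Surjectivity of $T|_K$, obtained from minimality of $K$, upgrades non-expansion to an isometry---for instance via any $T$-invariant Borel probability measure on $K$, which exists by Markov--Kakutani and has full support because $K$ is minimal. From the isometry one reads off distality immediately: $\tilde d(T^n a, T^n b) = \tilde d(a, b) > 0$ for all $n$ whenever $a \neq b$.

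Hence $x$ is a distal point of $(K, T|_K)$, so by the theorem of Furstenberg \cite[Theorem 9.11]{Fur3} already recalled in this section, $x$ is $\text{IP}^*$-recurrent for $T|_K$. Since every neighbourhood of $x$ in $K$ is the trace of an $X$-neighbourhood of $x$, the corresponding return sets coincide, and we conclude $x \in \text{IP}^*\text{Rec}(T)$. The main obstacle I expect is the passage from non-expansion to isometry; it is precisely at this step that minimality of $K$---and hence uniform recurrence of $x$---is genuinely used.
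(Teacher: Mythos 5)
Your proposal is correct, and it shares the paper's outer frame --- compactness of the orbit closure $K$ via Theorem \ref{t-power2}, and Furstenberg's Theorem 9.11 to reduce $\text{IP}^*$-recurrence of $x$ to showing that no point of $K$ other than $x$ is proximal to $x$ --- but the middle of the argument is genuinely different. The paper never proves distality of the whole system and never uses minimality of $K$: it exploits linearity instead. If $y\in K$ is proximal to $x$, then with a translation-invariant metric equicontinuity of $(T^n)$ turns $d(T^{n_k}x,T^{n_k}y)\to 0$ into $T^n(x-y)\to 0$ for \emph{all} $n$; on the other hand, recurrence of $x$ gives a sequence $(m_k)$ with $T^{m_k}x\to x$, and equicontinuity plus density of the orbit in $K$ yield $T^{m_k}z\to z$ for every $z\in K$, so that $x-y=\lim_k T^{m_k}(x-y)=0$, a contradiction. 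This is short and self-contained, needing neither Furstenberg's Theorem 1.17, nor invariant measures, nor the isometry upgrade. Your route proves the stronger, purely topological statement that a compact minimal equicontinuous system is distal, which buys generality (nothing linear is used on $K$) at the cost of more machinery: minimality of $K$, surjectivity of $T|_K$, and the passage from non-expansion to isometry. That last step, which you rightly flag, does go through along the lines you indicate, but it should be completed explicitly: take a $T$-invariant probability measure $\mu$ on $K$ (Krylov--Bogolyubov or Markov--Kakutani), note that $\mu\times\mu$ is $(T\times T)$-invariant with support $K\times K$ (minimality gives $\operatorname{supp}\mu=K$), and integrate the continuous nonnegative function $\tilde d-\tilde d\circ(T\times T)$ to conclude it vanishes identically; alternatively, Arzel\`a--Ascoli applied to the equicontinuous family $(T^n|_K)$ together with surjectivity produces iterates uniformly close to the identity, which gives the isometry directly. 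With either completion your argument is sound; the paper's proof is the more economical one for this linear setting, while yours isolates the general dynamical mechanism behind it.
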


\begin{proof}
We just need to show that every uniformly recurrent vector is $\text{IP}^*$-recurrent. Thus let $x\in X$ be uniformly recurrent for $T$. By Theorem \ref{t-power2} the closure $K$ of its orbit is a compact set, and it is $T$-invariant. When we now apply \cite[Theorem 9.11]{Fur3} to the dynamical system $(K, T|_K)$ we see that $x$ is $\text{IP}^*$-recurrent (for $T$) provided that no point $y\neq x$ in $K$ is proximal to $x$.

Thus, suppose that there is some $y\in K$ with $y\neq x$ such that $x$ and $y$ are proximal for $T$. Let $(n_k)_{k\in\N}$ be an increasing sequence of positive integers such that  $d(T^{n_k}x,T^{n_k}y)\to 0$ as $k\to\infty$. Since we may assume that the metric $d$ on $X$ is translation-invariant, we get that
\[
T^{n_k}(x-y)\to 0\quad\text{as $k\to\infty$}.
\]
By equicontinuity of $(T^n)_{n\in\N_0}$ we then have that $T^n(x-y)\to 0$ as $n\to\infty$.

Now, since $x$ is recurrent there is an increasing sequence $(m_k)_{k\in\N}$ of positive integers so that $T^{m_k}x\to x$ as $k\to\infty$. Thus, $T^{m_k}T^nx\to T^nx$ as $k\to\infty$, for each $n\in\N_0$. Again by equicontinuity of $(T^n)_{n\in\N_0}$ and the density of the orbit of $x$ in $K$, we get that $T^{m_k}z\to z$ as $k\to\infty$, for every $z\in K$. In particular,
\[
0=\lim_{k\to\infty} T^{m_k}(x-y)=x-y \neq 0,
\]
which is the desired contradiction.
\end{proof}

The above result suggests the following problem.

\begin{question}\label{q10}
Is there an operator that is uniformly recurrent but not $\text{IP}^*$-recurrent?
\end{question}

Let us comment on this problem.

\begin{remark}
(a) The Le\'on-M\"uller theorem holds for $\textrm{IP}^*$-recurrence, that is, for any operator $T\in L(X)$ and any scalar $\lambda$ with $|\lambda|=1$ we have that $\text{IP}^*\text{Rec}(\lambda T)= \text{IP}^*\text{Rec}(T)$. This is an easy consequence of the fact that a vector is $\text{IP}^*$-recurrent if and only if it is product recurrent, and the fact that the Le\'on-M\"uller theorem holds for recurrence. It thus follows exactly as in the proof of Corollary \ref{c-unifnotper} that there exists a hypercyclic operator on Hilbert space that has a dense set of $\text{IP}^*$-recurrent vectors but no non-zero periodic points. In particular, the first inclusion in \eqref{eq-pipu} can be strict in a very strong sense, but we do not know the status of the second inclusion. Incidentally, $\mathcal{IP}^*$ does not have CuSP, so that one cannot deduce the Le\'on-M\"uller property as in the proof of Theorem \ref{t-LeMu}: the set $A=\N_0$ can be partitioned into the even and the odd numbers, but the set of odd numbers is not an $\text{IP}^*$-set, see \cite[p. 178]{Fur3}.

(b) For all of the operators considered in this paper, whenever we could show uniform recurrence, we even obtained $\text{IP}^*$-recurrence. This will be a common pattern for the operators considered in the next section. And for weighted backward shift operators see Theorem \ref{t-rechc}.
\end{remark}

\section{Recurrence and unimodular eigenvectors}\label{s-receigen}

As promised we now study recurrence properties of various classes of operators. We limit ourselves to operators studied by Costakis et al.~\cite{CMP}; our results strengthen several of their results. In order to keep the paper short we refer to that paper for the definition of the operators and the spaces involved.

It turns out that for practically all of these operators their unimodular eigenvectors play a crucial role. In the sequel we will only consider Fr\'echet spaces over the complex field, and we recall that $\T$ denotes the unit circle in $\C$.

The following result is the key point in this section.

\begin{lemma}\label{l-kron}
Let $\lambda_1,\ldots,\lambda_k\in \T$, $k\geq 1$. Then, for any $\varepsilon>0$,
\[
\{ n\geq 0 : \sup_{j=1,\ldots,k} |\lambda_j^n -1|< \varepsilon\} \in \mathcal{IP}^*;
\]
in particular, it is a syndetic set.
\end{lemma}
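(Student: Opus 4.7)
The plan is to reduce the statement to Theorem~\ref{t-pbdd} by packaging the scalars $\lambda_1,\ldots,\lambda_k$ as a single power bounded operator on a finite-dimensional complex space, identifying the set in question as a return set, and then reading off $\text{IP}^*$-recurrence at a suitably chosen vector.

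Concretely, I would work on $Y = \C^k$ equipped with the supremum norm and define $S \in L(Y)$ by $S(z_1,\ldots,z_k) = (\lambda_1 z_1,\ldots,\lambda_k z_k)$. Since each $|\lambda_j| = 1$, $S$ is an isometry, hence power bounded. Setting $x_0 = (1,\ldots,1)$ and $U_\varepsilon = \{ y \in Y : \|y-x_0\|_\infty < \varepsilon\}$, the return set $N(x_0, U_\varepsilon)$ coincides exactly with the set appearing in the statement of the lemma. So it suffices to prove that $x_0 \in \text{IP}^*\text{Rec}(S)$; the syndeticity claim then follows immediately from the already recalled fact (\cite[Lemma~9.2]{Fur3}) that every $\text{IP}^*$-set is syndetic.

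By Theorem~\ref{t-pbdd}, the property $x_0 \in \text{IP}^*\text{Rec}(S)$ is equivalent to $x_0 \in \text{URec}(S)$, which is the only step that requires real work. The orbit $\{S^n x_0 : n \geq 0\} = \{(\lambda_1^n,\ldots,\lambda_k^n) : n \geq 0\}$ lies in the compact torus $\T^k$, and its closure $H$ is the closed subgroup of $\T^k$ generated by $\lambda := (\lambda_1,\ldots,\lambda_k)$. The restriction $S|_H$ is the rotation $z \mapsto \lambda z$ on the compact metric abelian group $H$, and this rotation is minimal: for any $h \in H$ the orbit closure of $h$ equals $h \cdot \overline{\{\lambda^n : n \geq 0\}} = hH = H$. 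Birkhoff's recurrence theorem then forces every point of this minimal compact system, and in particular $x_0$, to be uniformly recurrent.

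The only mildly delicate input is the minimality of the rotation on the compact abelian group $H$; this is classical but worth spelling out to keep the argument self-contained. Everything else --- the construction of $S$, the identification of the return set, the invocation of Theorem~\ref{t-pbdd}, and the deduction of syndeticity --- is mechanical.
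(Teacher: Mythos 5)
Your argument is correct, and it is genuinely different from the paper's. The paper proves Lemma~\ref{l-kron} in one line by applying Furstenberg's result on Kronecker systems (\cite[Proposition 9.8 with Lemma 9.2]{Fur3}) directly to the rotation $(z_1,\ldots,z_k)\mapsto(\lambda_1z_1,\ldots,\lambda_kz_k)$ on the compact group $\T^k$. You instead linearize the same rotation as the diagonal isometry $S$ on $\C^k$, identify the set in question as the return set $N(x_0,U_\varepsilon)$ with $x_0=(1,\ldots,1)$, and then invoke the paper's own Theorem~\ref{t-pbdd} (power bounded $\Rightarrow$ $\text{IP}^*\text{Rec}=\text{URec}$), reducing everything to the classical facts that the orbit closure $H$ of $x_0$ is a compact subgroup of $\T^k$ on which the rotation is minimal, and that every point of a minimal compact system is uniformly recurrent (this is Furstenberg's Theorem 1.17 / Gottschalk--Hedlund rather than literally Birkhoff, but it is classical in any case). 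There is no circularity: Theorem~\ref{t-pbdd} is proved in Section~\ref{s-iprec} via Theorem~\ref{t-power2} and \cite[Theorem 9.11]{Fur3}, without any appeal to Lemma~\ref{l-kron}, and it precedes the lemma in the paper. What your route buys is a nice internal coherence --- the lemma becomes a corollary of the paper's power-boundedness machinery, with the only outside input being the elementary structure theory of compact group rotations (that the closure of $\{\lambda^n:n\ge 0\}$ is a group, and minimality of the rotation on it, which you rightly flag as the step to spell out). What it costs is that the Furstenberg machinery is still present, only hidden: Theorem~\ref{t-pbdd} rests on \cite[Theorem 9.11]{Fur3} and a proximality argument, so your proof is ultimately longer and less self-contained than the paper's direct citation of \cite[Proposition 9.8]{Fur3}, which is tailor-made for exactly this Kronecker system.
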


\begin{proof}
This is a consequence of \cite[Proposition 9.8 with Lemma 9.2]{Fur3}, applied to the Kronecker system consisting of the compact group $\mathbb{T}^k$ and the (left) multiplication $(z_1,\ldots,z_k)\to (\lambda_1z_1,\ldots,\lambda_kz_k)$.
\end{proof}

We mention that if one is only interested in proving that the sets are syndetic, then one finds a nice proof in \cite[Lemma 3.1]{MT} based on Kronecker's theorem.

For $T\in L(X)$ we denote by
\[
\mathcal{E}(T) = \bigcup_{\lambda\in\T} \text{Eig}(T,\lambda)
\]
the set of unimodular eigenvectors for $T$.

For uniform recurrence, the following was obtained, with a different proof, in \cite[Fact 5.6]{GriMaMe}.

\begin{corollary}\label{c-GMM}
Let $T\in L(X)$. Then
\[
\text{\emph{span}}\,\mathcal{E}(T) \subset \text{\emph{IP}}^*\text{\emph{Rec}}(T).
\]
In particular, if $\text{\emph{span}}\,\mathcal{E}(T)$ is dense in $X$ then $T$ is $\text{\emph{IP}}^*$-recurrent, and hence uniformly recurrent.
\end{corollary}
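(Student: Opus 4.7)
The plan is to show first that every individual unimodular eigenvector is $\text{IP}^*$-recurrent, and then to invoke the theorem just established that $\text{IP}^*\text{Rec}(T)$ is a linear subspace.

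For the first step, suppose $Tx = \lambda x$ with $\lambda \in \T$. Given a neighbourhood $U$ of $x$, continuity of scalar multiplication at $(1,x)$ yields some $\varepsilon > 0$ such that $\mu x \in U$ whenever $|\mu - 1| < \varepsilon$. Since $T^n x = \lambda^n x$, this gives
\[
N(x,U) \supset \{n\geq 0 : |\lambda^n - 1| < \varepsilon\},
\]
and the right-hand side lies in $\mathcal{IP}^*$ by Lemma \ref{l-kron} applied with $k=1$. As $\mathcal{IP}^*$ is a filter, it is upward closed, so $N(x,U)\in \mathcal{IP}^*$, whence $x \in \text{IP}^*\text{Rec}(T)$. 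Combining this with the linearity of $\text{IP}^*\text{Rec}(T)$ proved just above immediately upgrades $\mathcal{E}(T)\subset \text{IP}^*\text{Rec}(T)$ to $\text{span}\,\mathcal{E}(T) \subset \text{IP}^*\text{Rec}(T)$.

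Alternatively, one can bypass the linearity theorem by treating a linear combination $x = \sum_{j=1}^{k} c_j x_j$ with $Tx_j = \lambda_j x_j$ directly: fix $\delta>0$ with $\{y : \|y-x\|<\delta\}\subset U$ (or the analogous condition in a Fr\'echet space given by a seminorm), and apply Lemma \ref{l-kron} to the tuple $(\lambda_1,\ldots,\lambda_k)$ to get an $\text{IP}^*$-set of $n$ on which $\sup_j |\lambda_j^n - 1|$ is small enough that $T^n x = \sum_j c_j \lambda_j^n x_j$ lies in $U$. Either route works; the linearity argument is cleaner.

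The "In particular" assertion is then immediate: density of $\text{span}\,\mathcal{E}(T)$ forces density of $\text{IP}^*\text{Rec}(T)$, which is precisely $\text{IP}^*$-recurrence of $T$, and the second inclusion in \eqref{eq-pipu} passes this down to uniform recurrence. There is no genuine obstacle in the argument: the substantive content lies entirely in Lemma \ref{l-kron}, which rests on Furstenberg's theory of Kronecker systems and has already been established; the corollary is just a clean packaging of that lemma together with the filter property and the linear-subspace structure of $\text{IP}^*\text{Rec}(T)$.
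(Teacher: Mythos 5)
Your proposal is correct, and in fact your ``alternative'' route is precisely the proof the paper gives: decompose $x=\sum_{j=1}^k a_j x_j$ with $Tx_j=\lambda_j x_j$, pick $\varepsilon>0$ so that $\sum_j \eta_j a_j x_j$ lands in a prescribed $0$-neighbourhood whenever all $|\eta_j|<\varepsilon$, and apply Lemma~\ref{l-kron} to the whole tuple $(\lambda_1,\ldots,\lambda_k)$ to get an $\mathrm{IP}^*$-set of times $n$ with $T^nx-x=\sum_j(\lambda_j^n-1)a_jx_j$ small. Your primary route (each unimodular eigenvector is $\mathrm{IP}^*$-recurrent by the $k=1$ case of Lemma~\ref{l-kron}, then invoke the earlier theorem that $\mathrm{IP}^*\mathrm{Rec}(T)$ is a linear subspace) is a legitimate and slightly different packaging: it is available because that theorem is proved before the corollary, and it simply shifts the ``finitely many eigenvalues simultaneously'' step from the $k$-tuple Kronecker system in Lemma~\ref{l-kron} to the filter property of $\mathcal{IP}^*$, which is exactly what powers the linear-subspace theorem. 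The two arguments are therefore mathematically equivalent in substance; the paper's version is self-contained within the section, while yours makes the reliance on the filter/subspace structure explicit. The ``in particular'' step via the second inclusion in \eqref{eq-pipu} is handled the same way in both.
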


\begin{proof} Let $x\in \text{span}\,\mathcal{E}(T)$, and let $W$ be a $0$-neighbourhood. We can write $x=\sum_{j=1}^k a_j x_j$ with $a_j\in\C$ and $x_j\in X$ such that $Tx_j=\lambda_jx_j$, $\lambda_j\in \T$, for $j=1,\ldots,k$. Then there is some $\varepsilon>0$ such that $\sum_{j=1}^k \eta_j a_j x_j \in W$ whenever $|\eta_j|<\varepsilon$ for $j=1,\ldots,k$. Now, by the lemma, there is a set $A\in\mathcal{IP}^*$ such that $|\lambda_j^n-1|<\varepsilon$ for all $n\in A$ and $j=1,\ldots,k$. Thus we have for any $n\in A$ that
\[
T^nx- x = \sum_{j=1}^k (\lambda_j^n-1) a_j x_j \in W,
\]
which shows the claim.
\end{proof}

The corollary reminds one of the well-known fact that the set of periodic points of an operator has the representation
\[
\text{Per}(T) = \text{span}\Big(\bigcup_{\substack{\lambda\in\T\\ \lambda \text{ root of unity}}} \text{Eig}(T,\lambda)\Big),
\]
see \cite[Proposition 2.33]{GrPe11}. We will see in Remark \ref{r-strict} below that we do not necessarily have equality in Corollary \ref{c-GMM}.

We start by looking at operators on finite-dimensional spaces.

\begin{theorem}
Let $n\geq 1$. Then, for a matrix $T:\C^n\to \C^n$, the following assertions are equivalent:
\begin{itemize}
\item[\emph{(a)}] $T$ is recurrent;
\item[\emph{(b)}] $T$ is uniformly recurrent;
\item[\emph{(c)}] $T$ is $\text{\emph{IP}}^*$-recurrent;
\item[\emph{(d)}] $T$ is similar to a diagonal matrix with unimodular diagonal entries.
\end{itemize}
In that case, every vector in $\C^n$ is $\text{\emph{IP}}^*$-recurrent for $T$.
\end{theorem}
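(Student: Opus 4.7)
The implications (c) $\Rightarrow$ (b) $\Rightarrow$ (a) are immediate from the chain of inclusions in \eqref{eq-pipu}. For (d) $\Rightarrow$ (c) I would invoke Corollary \ref{c-GMM}: if $T$ is diagonalizable with unimodular spectrum then $\C^n$ has a basis of unimodular eigenvectors of $T$, so $\text{span}\,\mathcal{E}(T) = \C^n \subset \text{IP}^*\text{Rec}(T)$. The same observation also yields the final assertion that every vector of $\C^n$ is $\text{IP}^*$-recurrent for $T$. Thus the real content of the theorem is the single implication (a) $\Rightarrow$ (d).

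For (a) $\Rightarrow$ (d) I would split the argument into two steps. First, every eigenvalue of $T$ must be unimodular: the spectral result of Costakis et al.\ recalled at the beginning of Section \ref{s-recspec} guarantees that each connected component of $\sigma(T)$ meets $\T$, and in finite dimension $\sigma(T)$ is a finite set whose connected components are singletons.

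Second, I would show that $T$ is diagonalizable, arguing by contraposition using the Jordan decomposition $\C^n = \bigoplus_\mu V_\mu$. Suppose $T$ has a Jordan block of size $d \geq 2$ at some unimodular eigenvalue $\mu$, and fix a Jordan basis $w_0, \ldots, w_{d-1}$ of this block with $(T-\mu I) w_k = w_{k+1}$ for $k < d-1$ and $(T-\mu I) w_{d-1} = 0$. Expanding
\[
T^n v = \sum_{k=0}^{d-1} \binom{n}{k}\mu^{n-k}(T-\mu I)^k v
\]
and reading off the $w_{d-1}$-coordinate shows that if $v \in V_\mu$ has a non-zero $w_0$-coordinate $c_0$, then the $w_{d-1}$-coordinate of $T^n v$ grows like $\binom{n}{d-1}$ in modulus, since the factor $\mu^{n-d+1}$ has modulus $1$. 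Consequently, every $x \in \C^n$ whose $V_\mu$-component has non-zero $w_0$-coordinate has unbounded orbit and cannot be recurrent. This set is the complement of a hyperplane in $\C^n$, hence open and dense, which contradicts the density of $\text{Rec}(T)$.

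The main technical point to pin down is the growth claim: after factoring out $\mu^{n-d+1}$, the $w_{d-1}$-coordinate of $T^n v$ becomes a polynomial in $n$ whose leading term is a non-zero multiple of $\binom{n}{d-1}$ whenever $c_0 \neq 0$, so this coordinate really is unbounded. The rest is routine bookkeeping, and assembling the steps yields the equivalence of (a)--(d) together with the stated $\text{IP}^*$-recurrence of every vector in $\C^n$.
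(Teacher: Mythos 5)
Your proposal is correct, and on the part the paper actually proves it coincides with the paper's argument: the paper likewise deduces (d) $\Rightarrow$ (c) and the final assertion from Corollary \ref{c-GMM}, by observing that the columns $Se_k$ of a diagonalizing similarity are unimodular eigenvectors, so $\text{span}\,\mathcal{E}(T)=\C^n\subset \text{IP}^*\text{Rec}(T)$, and it closes the cycle using the inclusions \eqref{eq-pipu} and \eqref{eq-incl}. Where you genuinely differ is (a) $\Rightarrow$ (d): the paper simply cites \cite[Theorem 4.1]{CMP} for the equivalence of (a) and (d), whereas you re-prove it, getting unimodularity of the eigenvalues from the spectral result of \cite{CMP} recalled at the start of Section \ref{s-recspec} (in finite dimension the components of $\sigma(T)$ are the eigenvalues), and diagonalizability from a Jordan-block growth estimate showing that otherwise $\text{Rec}(T)$ would be contained in a hyperplane, contradicting density. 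Your route buys a self-contained linear-algebra proof of the classification (and the hyperplane observation even shows the recurrent vectors form a quite small set in the non-diagonalizable case), at the cost of length; the paper's version is shorter because the finite-dimensional classification of recurrent matrices is already available in \cite{CMP}.

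One caveat on phrasing in your Jordan-block step: "unbounded orbit, hence not recurrent" is not a valid general principle --- recurrent vectors can perfectly well have unbounded orbits (any hypercyclic vector is recurrent, and compare Example \ref{ex_urec_unbounded}). What your computation actually yields is stronger and is what you need: the $w_{d-1}$-coordinate of $T^nx$ equals $\sum_{k=0}^{d-1}\binom{n}{k}\mu^{n-k}c_{d-1-k}$, whose modulus is $|c_0|\binom{n}{d-1}+O(n^{d-2})\to\infty$ along the whole sequence because $|\mu|=1$; since the coordinate functional is continuous, no subsequence $T^{n_k}x$ can converge to $x$. Stated this way, the argument is sound.
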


\begin{proof}
Costakis et al.~\cite[Theorem 4.1]{CMP} have shown that (a) and (d) are equivalent. Thus it suffices to show that (d) implies (c).

Let $S$ be an invertible matrix such that $S^{-1}TS$ is a diagonal matrix with unimodular diagonal entries. Then $Se_k\in \mathcal{E}(T)$, for $k=1,\ldots,n$. Thus $\text{span}\,\mathcal{E}(T)=\C^n$, and we conclude with Corollary \ref{c-GMM}.
\end{proof}

This result suggests to consider general multiplication operators.

\begin{theorem}
Let $X$ be a complex Fr\'echet sequence space over $\N$ which contains $\text{\emph{span}}\{ e_n:n\in\N\}$ as a dense set. Let $(\lambda_n)_n$ be a sequence in $\C$, and let $M_\lambda$ be the multiplication operator
\[
M_\lambda (x_n)_n = (\lambda_n x_n)_n,
\]
which we suppose to be an operator on $X$.

\emph{(A)} The following assertions are equivalent:
\begin{itemize}
\item[\emph{(a)}] $M_\lambda$ is recurrent;
\item[\emph{(b)}] $M_\lambda$ is uniformly recurrent;
\item[\emph{(c)}] $M_\lambda$ is $\text{\emph{IP}}^*$-recurrent;
\item[\emph{(d)}] $\lambda_n\in\T$ for all $n\geq 1$.
\end{itemize}

\emph{(B)} If $M_\lambda$ is power bounded and one of the conditions in \emph{(A)} holds then every vector in $X$ is $\text{\emph{IP}}^*$-recurrent for $M_\lambda$.
\end{theorem}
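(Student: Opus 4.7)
My plan is to reduce both parts of the theorem to the tools already assembled, namely the inclusion chain $\text{IP}^*\text{Rec}(T)\subset \text{URec}(T)\subset \text{Rec}(T)$ from \eqref{eq-pipu}/\eqref{eq-incl}, Corollary \ref{c-GMM} on spans of unimodular eigenvectors, and the power-boundedness argument from Theorem \ref{t-power}.

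For part (A), the implications (c)$\Rightarrow$(b)$\Rightarrow$(a) are immediate from the above inclusions, so two things remain. For (a)$\Rightarrow$(d), I would argue by density of recurrent vectors: for each $n\geq 1$ the coordinate functional $x\mapsto x_n$ is continuous and nonzero on a neighborhood of $e_n$, so recurrence of $M_\lambda$ produces a recurrent vector $x$ with $x_n\neq 0$ and an increasing $(n_k)$ with $M_\lambda^{n_k}x\to x$, whence $\lambda_n^{n_k}x_n\to x_n$ and thus $|\lambda_n|^{n_k}\to 1$, forcing $|\lambda_n|=1$. For (d)$\Rightarrow$(c), note that each $e_n$ is an eigenvector of $M_\lambda$ with unimodular eigenvalue $\lambda_n$, so $\mathrm{span}\{e_n:n\in\mathbb N\}\subset \mathrm{span}\,\mathcal E(M_\lambda)$; the hypothesis makes the left-hand side dense, and Corollary \ref{c-GMM} gives $\text{IP}^*$-recurrence.

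For part (B), the point is to upgrade denseness in Corollary \ref{c-GMM} to the full space, using power boundedness just as in the proof of Theorem \ref{t-power}. Fix $x\in X$ and a $0$-neighborhood $U$. Choose a balanced $0$-neighborhood $W_1$ with $W_1+W_1+W_1\subset U$, and by power boundedness a balanced $0$-neighborhood $W_2\subset W_1$ with $M_\lambda^n(W_2)\subset W_1$ for all $n\geq 0$. By density of $\mathrm{span}\{e_n\}$ pick $\tilde x\in\mathrm{span}\{e_n\}$ with $x-\tilde x\in W_2$. Then $\tilde x$ is a finite combination of unimodular eigenvectors, and by Corollary \ref{c-GMM} (its proof already gives membership in the set, not just density) the set
\[
A=\{n\geq 0 : M_\lambda^n\tilde x-\tilde x\in W_1\}
\]
lies in $\mathcal{IP}^*$. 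For $n\in A$,
\[
M_\lambda^n x - x = M_\lambda^n(x-\tilde x) + (M_\lambda^n\tilde x-\tilde x) + (\tilde x-x)\in W_1+W_1+W_1\subset U,
\]
so $N(x,x+U)\supset A\in\mathcal{IP}^*$, which shows $x\in\text{IP}^*\text{Rec}(M_\lambda)$.

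There is no real obstacle here: the statement is a clean combination of the eigenvector corollary with the standard equicontinuity trick. The only minor care needed is to choose $W_2$ balanced (and inside $W_1$) so that both $x-\tilde x$ and $\tilde x-x$ lie in $W_1$, and to note that the same $A$ provided by Corollary \ref{c-GMM} for the finite linear combination $\tilde x$ transfers to the approximated vector via the power-bounded estimate—exactly the mechanism used in the closedness proof of Theorem \ref{t-power}.
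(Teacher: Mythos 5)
Your proof is correct. For part (A) you follow the same route as the paper: (c)$\Rightarrow$(b)$\Rightarrow$(a) from the inclusions, (d)$\Rightarrow$(c) from Corollary \ref{c-GMM} applied to the unimodular eigenvectors $e_n$, and the coordinate-functional argument for (a)$\Rightarrow$(d), which the paper leaves implicit as obvious but which you spell out correctly. For part (B), however, you take a genuinely different path: the paper deduces it from Theorem \ref{t-power} (closedness of $\text{URec}$ under power boundedness) combined with Theorem \ref{t-pbdd} ($\text{IP}^*\text{Rec}(T)=\text{URec}(T)$ for power bounded $T$), the latter resting on Furstenberg's proximality theorem and the compactness of orbit closures from Theorem \ref{t-power2}. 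You instead transfer the $\text{IP}^*$ return sets directly from the dense subspace $\text{span}\{e_n\}\subset\text{IP}^*\text{Rec}(M_\lambda)$ to an arbitrary vector via the equicontinuity splitting $M_\lambda^n x - x = M_\lambda^n(x-\tilde x)+(M_\lambda^n\tilde x-\tilde x)+(\tilde x-x)$, which amounts to re-proving, for $\mathcal{F}=\mathcal{IP}^*$, the general closedness statement that the paper records as Theorem \ref{t-power3}. Your route is more elementary and self-contained (no proximality machinery), at the cost of redoing an argument the paper already has in stock; the paper's route is shorter given its earlier results, and Theorem \ref{t-pbdd} gives the stronger structural fact $\text{IP}^*\text{Rec}=\text{URec}$ which your argument does not recover. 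The only care points you mention (balanced $W_2$, using the statement rather than the proof of Corollary \ref{c-GMM}) are handled correctly.
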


\begin{proof}
(A) It obviously suffices to show that (d) implies (c). For this, we need only observe that every sequence $e_n$, $n\geq 1$, belongs to $\mathcal{E}(M_\lambda)$, so that $\text{span}\,\mathcal{E}(M_\lambda)$ is dense in $X$ by hypothesis.

(B) This is a direct consequence of Theorems \ref{t-power} and \ref{t-pbdd}.
\end{proof}

We note that if $(e_n)_n$ is an unconditional basis of $X$ and if and one of the conditions in (A) holds then $M_\lambda$ is power bounded, so that the conclusion of (B) holds in this case.

This result has an interesting consequence.

\begin{remark}\label{r-strict}
Let us consider a multiplication operator $M_\lambda$ on $\ell^2(\N)$, say, where the $\lambda_n \in \T$, $n\geq 1$, are pairwise distinct. Then the non-zero multiples of the $e_n$, $n\geq 1$, are the only unimodular eigenvectors, so that $\text{span}\,\mathcal{E}(T)$ contains exactly the finite sequences. On the other hand, by the previous result, every vector in $\ell^2(\N)$ is $\text{IP}^*$-recurrent. This shows that the inclusion in Corollary \ref{c-GMM} may be strict.
\end{remark}

We turn, more generally, to multiplication operators on spaces of measurable functions. If $(\Omega,\mathcal{A},\mu)$ is a measure space, then we call a function $\phi:\Omega\to \C$ essentially countably valued in $D\subset \C$ if there is a countable subset $C\subset D$ such that $\phi(t)\in C$ for $\mu$-almost every $t\in \Omega$.

\begin{theorem}
Let $(\Omega,\mathcal{A},\mu)$ be a measure space, $\phi$ a bounded measurable function on $\Omega$, and let $M_{\phi}$ be the multiplication operator
\[
M_{\phi}f = \phi f
\]
on $L^p(\Omega)$, $1\leq p < \infty$.

\emph{(a)} \emph{(\cite{CMP})} If $M_\phi$ is recurrent then $\phi(t)\in \T$ for $\mu$-almost every $t\in \Omega$.

\emph{(b)} If $\phi$ is essentially countably valued in $\T$ then $M_\phi$ is $\text{\emph{IP}}^*$-recurrent; in fact, every vector in $L^p(\Omega)$ is $\text{\emph{IP}}^*$-recurrent.
\end{theorem}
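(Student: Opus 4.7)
The plan is to show directly that every $f\in L^p(\Omega)$ is $\text{IP}^*$-recurrent for $M_\phi$, from which $\text{IP}^*$-recurrence of the operator follows trivially. The starting point is that by hypothesis there is a countable set $\{\lambda_k : k\geq 1\}\subset\T$ such that $\phi(t)\in \{\lambda_k : k\geq 1\}$ for $\mu$-a.e.\ $t$. Setting $\Omega_k = \phi^{-1}(\{\lambda_k\})$ we obtain a measurable partition of $\Omega$ (up to a null set), so every $f\in L^p(\Omega)$ decomposes as $f=\sum_k f_k$ with $f_k=f\chi_{\Omega_k}$, and by disjointness $\|f\|_p^p = \sum_k\|f_k\|_p^p$. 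The key observation is that on $\Omega_k$ one has $M_\phi^n f = \lambda_k^n f$, so
\[
\|M_\phi^n f - f\|_p^p = \sum_{k\geq 1} |\lambda_k^n -1|^p\, \|f_k\|_p^p.
\]

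Now fix $\varepsilon>0$ and a neighbourhood $U=\{g\in L^p(\Omega) : \|g-f\|_p<\varepsilon\}$ of $f$. The strategy is to split the sum at a finite level $K$ chosen so that the tail is uniformly small (using the absolute bound $|\lambda_k^n-1|\leq 2$) and then to use Lemma~\ref{l-kron} to control the head simultaneously for the finitely many eigenvalues $\lambda_1,\ldots,\lambda_K$. Concretely, I would first pick $K$ so large that $2^p\sum_{k>K}\|f_k\|_p^p < \varepsilon^p/2$, which is possible because the series $\sum_k\|f_k\|_p^p$ converges. Then I would apply Lemma~\ref{l-kron} to obtain an $\text{IP}^*$-set $A$ on which $|\lambda_j^n-1|<\delta$ for all $j=1,\dots,K$, where $\delta>0$ is chosen so that $\delta^p\|f\|_p^p<\varepsilon^p/2$.

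Combining these two estimates, for every $n\in A$ we get
\[
\|M_\phi^n f-f\|_p^p \;\leq\; \delta^p\sum_{k=1}^K\|f_k\|_p^p \;+\; 2^p\sum_{k>K}\|f_k\|_p^p \;<\; \frac{\varepsilon^p}{2} + \frac{\varepsilon^p}{2} = \varepsilon^p,
\]
so $A\subset N(f,U)$, which means $N(f,U)\in\mathcal{IP}^*$. Since $\varepsilon>0$ is arbitrary, $f\in \text{IP}^*\text{Rec}(M_\phi)$; since $f$ was arbitrary, every vector of $L^p(\Omega)$ is $\text{IP}^*$-recurrent, and in particular $M_\phi$ is $\text{IP}^*$-recurrent.

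There is no real obstacle here: the whole argument reduces to the fact that Lemma~\ref{l-kron} handles only finitely many eigenvalues at a time, and the tail in $L^p$ is taken care of by the absolute convergence of $\sum_k\|f_k\|_p^p$. The mild subtlety is that, unlike the finite-dimensional or sequence-space cases where one would invoke Corollary~\ref{c-GMM} and density of $\text{span}\,\mathcal{E}(M_\phi)$, here we obtain the stronger conclusion that \emph{every} vector is $\text{IP}^*$-recurrent, exactly as in Remark~\ref{r-strict}, because the unimodular eigen-decomposition of $f$ converges unconditionally in $L^p$ and the filter $\mathcal{IP}^*$ is closed under finite intersections.
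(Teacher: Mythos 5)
Your argument is correct and is essentially the paper's own proof: the same partition of $\Omega$ according to the countably many values $\lambda_k$, the same split of $\|M_\phi^n f-f\|_p^p$ into a finite head controlled via Lemma~\ref{l-kron} and a tail bounded by $2^p$ times a small remainder. The only differences are cosmetic (working with $\varepsilon^p$ instead of $\varepsilon$ and phrasing the decomposition through $f_k=f\chi_{\Omega_k}$).
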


\begin{proof}
Part (a) was shown in the proof of \cite[Theorem 7.6]{CMP}.

For (b), let $\phi(t)\in \{ \lambda_1, \lambda_2, \ldots \}\subset \T$ for almost every $t\in \Omega$, and let $E_k =\{ t\in \Omega: \phi(t) =\lambda_k\}$, $k\ge 1$. Then $\mu(\Omega\setminus \bigcup_{k\geq 1}E_k)=0$.

Now let $f\in L^p(\Omega)$, $f\neq 0$, and $\varepsilon >0$. There exists some $N\ge 1$ such that
$$
\int _{\bigcup_{k>N}E_k}|f|^pd\mu <\frac{\varepsilon}{2^{p+1}}.
$$
By Lemma \ref{l-kron}, there is a set $A\in\mathcal{IP}^*$ such that, for any $n\in A$ and $k=1,\ldots,N$,
$$
|\lambda_k^n -1|^p <\frac{\varepsilon}{2\|f\|^p}.
$$
Therefore we have for every $n\in A$ that
\begin{align*}
\|M_\phi^nf-f\|^p =& \sum_{k=1}^N\int _{E_k}|\phi^n-1|^p|f|^p\text{d}\mu +\int _{\bigcup_{k>N}E_k}|\phi^n-1|^p|f|^p\text{d}\mu \\
\le &\frac{\varepsilon}{2\|f\|^p}\sum_{k=1}^N\int _{E_k}|f|^p\text{d}\mu+2^p\int _{\bigcup_{k>N}E_k}|f|^pd\mu <\varepsilon.
\end{align*}
Thus, $f$ is $\text{IP}^*$-recurrent.
\end{proof}

We note that in order to only get $\text{IP}^*$-recurrence of the operator in (b) we could have used Corollary \ref{c-GMM}. Indeed, any indicator function $\mathds{1}_E$ lies in $\mathcal{E}(T)$ when $E$ is a measurable subset of some $E_k$, $k\geq 1$. Then clearly $\text{span}\,\mathcal{E}(T)$ is dense in $L^p(\Omega)$.

\begin{example}
Costakis et al.~\cite[Example 7.9]{CMP} consider the case when $\Omega=[0,1]$ with the Lebesgue measure and $\phi:[0,1]\to \T$ given by $\phi(t) = \text{e}^{2\pi i f(t)}$, where $f:[0,1]\to [0,1]$ is the Cantor-Lebesgue function. Then not only is $\phi$ essentially countably valued in $\T$, but almost all of its values are even roots of unity. The previous argument then shows that $M_\phi$ has a dense set of periodic points.
\end{example}

We next turn to composition operators. We first look at operators on $H(\C)$ and $H(\D)$, the Fr\'echet spaces of entire functions and of holomorphic functions on $\D$, respectively, both endowed with the topology of uniform convergence on compact sets.

\begin{theorem} \label{entire}
Let $\phi:\C\to\C$ be a holomorphic function, and let $C_\phi$ be the composition operator on $H(\C)$ given by $C_\phi f = f\circ \phi$. Then the following assertions are equivalent:
\begin{itemize}
\item[\emph{(a)}] $C_{\phi}$ is recurrent;
\item[\emph{(b)}] $C_{\phi}$ is uniformly recurrent;
\item[\emph{(c)}] $C_{\phi}$ is $\text{\emph{IP}}^*$-recurrent;
\item[\emph{(d)}] $\phi(z)=az+b$, $z\in \C$, with $a\in \T$ and $b\in \C$.
\end{itemize}
Moreover, every vector is $\text{\emph{IP}}^*$-recurrent for $C_\phi$ if and only if $\phi(z)=az+b$, $z\in \C$, with $a=1$ and $b=0$, or $a\in\T\setminus \{1\}$ and $b\in \C$.
\end{theorem}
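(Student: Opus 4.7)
The implications (c)$\Rightarrow$(b)$\Rightarrow$(a) are immediate from the inclusion chain \eqref{eq-pipu}. I would organise the remaining work into three pieces: the implication (d)$\Rightarrow$(c) via Corollary~\ref{c-GMM}, the ``moreover'' clause via power-boundedness together with Birkhoff's classical theorem on translations, and the implication (a)$\Rightarrow$(d), which I expect to be the main technical obstacle.

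For (d)$\Rightarrow$(c), suppose $\phi(z)=az+b$ with $a\in\T$ and $b\in\C$. By Corollary~\ref{c-GMM} it suffices to exhibit a dense linear span of unimodular eigenvectors for $C_\phi$. Three cases arise. If $a\neq 1$, then $\phi$ has a unique fixed point $z_0=b/(1-a)$; conjugating by the bijection $f\mapsto f(\cdot+z_0)$ on $H(\C)$ reduces matters to $\phi(z)=az$, for which every monomial $z^k$ is an eigenvector of $C_\phi$ with unimodular eigenvalue $a^k$, and polynomials are dense in $H(\C)$. If $a=1$ and $b=0$, then $C_\phi=I$. If $a=1$ and $b\neq 0$, the exponentials $f_t(z):=e^{itz/b}$ for $t\in\R$ are eigenvectors of $C_\phi$ with eigenvalues $e^{it}\in\T$; density of $\text{span}\{f_t : t\in\R\}$ in $H(\C)$ follows from P\'olya--Martineau duality, which identifies $H(\C)'$ with the space of entire functions of exponential type via the Borel transform $\mu\mapsto(\zeta\mapsto\mu(e^{\zeta z}))$: any functional $\mu$ vanishing on all $f_t$ has Borel transform vanishing on the line $\{it/b:t\in\R\}\subset\C$, hence identically by analytic continuation.

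For the ``moreover'' clause, the case $a=1,b=0$ is trivial since $C_\phi=I$. For $a\in\T\setminus\{1\}$ and any $b\in\C$, after the same conjugation reducing $\phi$ to $z\mapsto az$, every compact $K\subset\C$ is mapped by all iterates into the compact set $K':=\{w\in\C:|w|\leq\max_{z\in K}|z|\}$, so $p_K(C_\phi^n f)\leq p_{K'}(f)$ and $C_\phi$ is power-bounded; Theorem~\ref{t-power} then makes $\text{URec}(C_\phi)$ closed, which combined with the density already established in (d)$\Rightarrow$(c)$\Rightarrow$(b) yields $\text{URec}(C_\phi)=H(\C)$, and Theorem~\ref{t-pbdd} upgrades this to $\text{IP}^*\text{Rec}(C_\phi)=H(\C)$. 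In the remaining case $a=1, b\neq 0$, $C_\phi$ is Birkhoff's translation operator, hypercyclic on $H(\C)$; Theorem~\ref{urv_not_hc} prevents hypercyclic vectors from being uniformly recurrent, so $\text{IP}^*\text{Rec}(C_\phi)\neq H(\C)$.

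The main obstacle is (a)$\Rightarrow$(d). Once $\phi(z)=az+b$ is known to be affine, the restriction $|a|=1$ is elementary: if $|a|>1$ then $\phi^n(z)\to\infty$ for every $z\neq z_0=b/(1-a)$, so $C_\phi^n f$ cannot return in the compact-open topology unless $f$ is constant; if $|a|<1$ then $\phi^n(z)\to z_0$ for every $z$, whence $C_\phi^n f$ converges in $H(\C)$ to the constant function $f(z_0)$, again incompatible with $\text{Rec}(C_\phi)$ being dense. The delicate step is forcing $\phi$ to be affine in the first place. For $\phi$ a polynomial of degree $d\geq 2$ one exploits that the basin of infinity is a non-empty open set on which $\phi^n\to\infty$ uniformly on compacta, and for $\phi$ transcendental entire one invokes Eremenko's theorem on the non-emptiness of the escaping set. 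In both sub-cases the goal is to combine the residuality of $\text{Rec}(C_\phi)$ in $H(\C)$ with the escape behaviour of $\phi^n$ to produce a recurrent vector $f$ whose iterates $f\circ\phi^{n_k}$ cannot return to $f$ on a compact set meeting the escaping set; making this argument both quantitative and uniform across the residual family of recurrent vectors is the main difficulty.
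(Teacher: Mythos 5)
Your treatment of (d)$\Rightarrow$(c) and of the ``moreover'' clause is correct, and it takes a slightly different route from the paper: the paper deduces (d)$\Rightarrow$(c) from the quantitative estimate in the proof of \cite[Theorem 6.4]{CMP} combined with Lemma~\ref{l-kron} (and from chaoticity of the translation operator when $a=1$, $b\neq 0$), whereas you go through Corollary~\ref{c-GMM} (density of $\text{span}\,\mathcal{E}(C_\phi)$: monomials after conjugation when $a\neq 1$, exponentials $e^{itz/b}$ when $a=1$, $b\neq 0$) and then upgrade to ``every vector'' in the elliptic case via power boundedness and Theorems~\ref{t-power} and \ref{t-pbdd}. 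That is a perfectly sound alternative, and in fact the paper itself remarks after the proof that the eigenvector route via Corollary~\ref{c-GMM} is available; your extra power-boundedness step is what the paper's direct estimate makes unnecessary. The exclusion of $a=1$, $b\neq 0$ in the ``moreover'' part via Birkhoff's operator and Theorem~\ref{urv_not_hc} is exactly the intended argument.

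The genuine gap is (a)$\Rightarrow$(d): you only sketch a plan and explicitly leave its key step (``making this argument quantitative and uniform across the residual family of recurrent vectors'') unresolved, so the implication is not proved. Moreover, the proposed complex-dynamics strategy is shakier than you suggest: for a transcendental entire $\phi$, Eremenko's theorem only gives a nonempty escaping set, which need not contain an open set, and escape of the orbit $\phi^n(z^*)$ does not by itself prevent $f(\phi^{n_k}(z^*))\to f(z^*)$ for a non-constant entire $f$ (entire functions can be small along sequences tending to infinity); in addition the return times $n_k$ depend on $f$, so residuality of $\text{Rec}(C_\phi)$ does not obviously produce the uniform non-return you need. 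The standard (and much simpler) missing idea is to use density of $\text{Rec}(C_\phi)$ to force $\phi$ to be injective: if $\phi(z_1)=\phi(z_2)$ with $z_1\neq z_2$, then every recurrent $f$ satisfies $f(z_1)=\lim_k f(\phi^{n_k}(z_1))=\lim_k f(\phi^{n_k}(z_2))=f(z_2)$, so all recurrent vectors lie in the proper closed subspace $\{f: f(z_1)=f(z_2)\}$, contradicting density; an injective entire function is affine, and then your elementary dichotomy on $|a|$ applies (with the $|a|>1$ case argued via the maximum modulus on expanding circles around the fixed point, not merely pointwise escape). Note finally that the paper does not reprove this equivalence at all: it simply cites \cite[Theorem 6.4]{CMP} for (a)$\Leftrightarrow$(d), so if you are writing within this paper's framework you may do the same; as a self-contained argument, however, your proposal for this implication is incomplete.
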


\begin{proof}
By \cite[Theorem 6.4]{CMP} it suffices to show that (d) implies (c), and that the second claim holds. Thus, let $\phi(z)=az+b$, $z\in \C$, with $a\in \T$ and $b\in \C$. If $a=1$ and $b\neq 0$, then $C_\phi$ is well known to be chaotic, see \cite[Example 2.35]{GrPe11}; in that case, $\text{IP}^*\text{Rec}(C_\phi)$ is dense in but not all of $H(\C)$. If $a=1$ and $b=0$ then clearly $\text{IP}^*\text{Rec}(C_\phi)=H(\C)$. Finally let $a\in\T\setminus \{1\}$ and $b\in \C$. Let $f\in H(\C)$, and fix $R>0$ and $\varepsilon>0$. It was shown in the proof of \cite[Theorem 6.4]{CMP} that there is an $\eta>0$ such that if $|a^n-1|<\eta$, $n\geq 0$, then
\[
\sup_{|z|\leq R} |C_\phi^n f(z) - f(z)| < \varepsilon.
\]
Now it follows from Lemma \ref{l-kron} that $\{ n\geq 0 : |a^n-1|<\eta\}\in \mathcal{IP}^*$. This implies that $f$ is $\text{IP}^*$-recurrent. Thus we have again that $\text{IP}^*\text{Rec}(C_\phi)=H(\C)$.
\end{proof}

It is instructive to note that if, once more, one is only interested in obtaining $\text{IP}^*$-recurrence of the operator then this can easily be done with Corollary \ref{c-GMM}. This is trivial if $a=1$ and $b=0$, and well known if $a=1$ and $b\neq 0$, see \cite[Example 2.35]{GrPe11}. Finally, if $a\in\T\setminus \{1\}$, $b\in\C$, then the functions $f_n(z)=(z+\frac{b}{a-1})^n$, $z\in \C$, $n\geq 0$, are eigenvectors for $C_\phi$ with eigenvalue $a^n\in\T$, and they span the set of polynomials, hence a dense subspace of $H(\C)$.

For the unit disk we have the following.

\begin{theorem} \label{disk2}
Let $\phi:\D\to\D$ be a holomorphic function, and let $C_\phi$ be the composition operator on $H(\D)$ given by $C_\phi f = f\circ \phi$. Then the following assertions are equivalent:
\begin{itemize}
\item[\emph{(a)}]  $C_{\phi}$ is recurrent;
\item[\emph{(b)}] $C_{\phi}$ is uniformly recurrent;
\item[\emph{(c)}] $C_{\phi}$ is $\text{\emph{IP}}^*$-recurrent;
\item[\emph{(d)}] either $\phi$ is univalent and has no fixed point, or $\phi$ is an elliptic automorphism.
\end{itemize}
Moreover, every vector is $\text{\emph{IP}}^*$-recurrent for $C_\phi$ if and only if $\phi$ is an elliptic automorphism.
\end{theorem}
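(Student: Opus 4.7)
The equivalence (a)$\Leftrightarrow$(d) is due to Costakis et al.\ \cite{CMP}, and the implications (c)$\Rightarrow$(b)$\Rightarrow$(a) follow immediately from \eqref{eq-pipu} together with \eqref{eq-incl}, so only (d)$\Rightarrow$(c) remains to close the four-way equivalence. I would start with the elliptic-automorphism subcase, which is the cleanest: if $z_0\in\D$ is the fixed point of $\phi$ and $\psi$ is a disk automorphism with $\psi(z_0)=0$, then $\psi\circ\phi\circ\psi^{-1}$ is an automorphism of $\D$ fixing $0$, hence equal to $R_\lambda(z)=\lambda z$ for some $\lambda\in\T$; thus $C_\phi$ is similar to $C_{R_\lambda}$ via $C_\psi$. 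The monomials $z^k$ are eigenvectors of $C_{R_\lambda}$ with unimodular eigenvalues $\lambda^k$, their span is dense in $H(\D)$, and Corollary~\ref{c-GMM} delivers $\text{IP}^*$-recurrence of $C_{R_\lambda}$, which transports to $C_\phi$ by similarity.

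For the remaining subcase, where $\phi$ is univalent with no fixed point in $\D$, I would proceed in the spirit of the proof of Theorem~\ref{entire}: inspecting the recurrence construction of \cite{CMP} shows that, for $f$ ranging over a dense subspace of $H(\D)$, approximations of the form $\sup_{|z|\le R}|C_\phi^n f(z)-f(z)|<\varepsilon$ reduce to a scalar diophantine condition on a unimodular parameter naturally attached to $\phi$, to which Lemma~\ref{l-kron} applies directly to furnish an $\text{IP}^*$-set of good $n$'s. A conceptually tidier alternative is to appeal to Cowen's linear fractional model, which yields an intertwiner $\sigma\colon\D\to\Omega$ satisfying either $\sigma\circ\phi=a\sigma$ (hyperbolic case) or $\sigma\circ\phi=\sigma+1$ (parabolic case), whence suitable branches of $\sigma^{it}$ respectively $e^{it\sigma}$ are unimodular eigenvectors of $C_\phi$; showing that their linear span is dense in $H(\D)$ would reduce (c) to Corollary~\ref{c-GMM}, and this density is the main technical obstacle I would anticipate on that route.

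For the ``moreover,'' the ``if'' direction follows via the same conjugation to $C_{R_\lambda}$ together with a Taylor-series estimate. Given $f(z)=\sum_{k\ge 0}a_k z^k\in H(\D)$, a compact $K\subset\{|z|\le r\}$ with $r<1$, and $\varepsilon>0$, I would truncate at some $N$ with $\sum_{k>N}|a_k|r^k<\varepsilon/3$; Lemma~\ref{l-kron} applied to $\lambda,\lambda^2,\dots,\lambda^N$ then produces an $\text{IP}^*$-set $A$ on which every $|\lambda^{nk}-1|$ is as small as required for $k=1,\dots,N$, so that $\sup_{z\in K}|C_{R_\lambda}^n f(z)-f(z)|<\varepsilon$ for every $n\in A$. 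This establishes $\text{IP}^*$-recurrence of every $f\in H(\D)$ for $C_{R_\lambda}$, and hence also for $C_\phi$.

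Conversely, suppose $\phi$ is univalent with no fixed point in $\D$. The Denjoy-Wolff theorem provides $\tau\in\partial\D$ with $\phi^n\to\tau$ uniformly on compacta. The explicit vector $f(z)=1/(1-\bar\tau z)\in H(\D)$ then satisfies $C_\phi^n f(z)=1/(1-\bar\tau\phi^n(z))$, whose denominator tends to $0$ uniformly on compacta since $\bar\tau\phi^n(z)\to|\tau|^2=1$. Thus no subsequence of $C_\phi^n f$ converges in $H(\D)$, so $f$ is not even recurrent, let alone $\text{IP}^*$-recurrent; this rules out the ``every vector is $\text{IP}^*$-recurrent'' conclusion whenever $\phi$ is not an elliptic automorphism, completing the dichotomy.
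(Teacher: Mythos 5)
Your reduction to (d)$\Rightarrow$(c) plus the ``moreover'' claim is correct, and two of your three blocks are fine: the elliptic case (conjugation to a rotation, monomials as unimodular eigenvectors, Corollary~\ref{c-GMM}, and for the ``every vector'' statement the truncation estimate with Lemma~\ref{l-kron}) matches the paper's argument, and your Denjoy--Wolff vector $f(z)=1/(1-\bar\tau z)$ is a perfectly good, even more explicit, replacement for the paper's way of ruling out that all vectors are $\text{IP}^*$-recurrent in the non-elliptic case (the paper instead uses that $C_\phi$ is then hypercyclic and invokes Theorem~\ref{urv_not_hc}).

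The genuine gap is the branch of (d)$\Rightarrow$(c) where $\phi$ is univalent without fixed point: neither of your two suggested routes closes it. The first route does not transfer from Theorem~\ref{entire}: when $\phi$ has no fixed point in $\D$, the recurrence in \cite{CMP} does not come from a diophantine condition on a unimodular parameter attached to $\phi$ --- the natural parameter (the derivative at the Denjoy--Wolff point, or the dilation/translation in the linear fractional model) is not unimodular in the hyperbolic and parabolic cases, so Lemma~\ref{l-kron} has nothing to act on. The second route (unimodular eigenvectors $\sigma^{it}$, $e^{it\sigma}$ from Cowen's model) could in principle work, but you yourself flag the density of their span as an unresolved obstacle, so as written the proof is incomplete exactly where the new content is needed. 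The paper closes this case in one line by a different and much softer observation: for such $\phi$ the operator $C_\phi$ on $H(\D)$ is \emph{chaotic} (Shapiro, \cite[Section 4]{Sha01}), hence $\text{Per}(C_\phi)$ is dense, and by \eqref{eq-pipu} periodic points are already $\text{IP}^*$-recurrent, so $\text{IP}^*\text{Rec}(C_\phi)$ is dense (and, being contained in $\text{URec}(C_\phi)$, cannot be all of $H(\D)$ since $C_\phi$ is hypercyclic). Adding this one observation would complete your proof; without it, (d)$\Rightarrow$(c) is only established for elliptic automorphisms.
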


\begin{proof}
By \cite[Theorem 6.9]{CMP} it suffices to show that (d) implies (c), and that the second claim holds. If $\phi$ is univalent and has no fixed point then $C_\phi$ is chaotic by \cite[Section 4]{Sha01}; hence $\text{IP}^*\text{Rec}(C_\phi)$ is dense in but not all of $H(\D)$. If $\phi$ is an elliptic automorphism then, by the proof of \cite[Theorem 6.9]{CMP}, $C_\phi$ is conjugate to $\C_{\phi_\lambda}$ for some $\lambda\in\T$, where $\phi_\lambda(z)=\lambda z$, $z\in \D$. It then follows easily from Lemma \ref{l-kron} that $\text{IP}^*\text{Rec}(C_\phi)=H(\D)$.
\end{proof}

We end the section by considering composition operators on the Hardy space $H^2(\D)$.

\begin{theorem} \label{disk}
Let $\phi:\D\to\D$ be a linear fractional map $\phi(z)=\frac{az+b}{cz+d}$, $z\in\D$, with $ad-bc\neq 0$. Let $C_\phi$ be the composition operator on $H^2(\D)$ given by $C_\phi f = f\circ \phi$. Then the following assertions are equivalent:
\begin{itemize}
\item[\emph{(a)}]  $C_{\phi}$ is recurrent;
\item[\emph{(b)}] $C_{\phi}$ is uniformly recurrent;
\item[\emph{(c)}] $C_{\phi}$ is $\text{\emph{IP}}^*$-recurrent;
\item[\emph{(d)}] $\phi$ is either hyperbolic with no fixed point in $\D$, or a parabolic automorphism, or an elliptic automorphism.
\end{itemize}
Moreover, every vector is $\text{\emph{IP}}^*$-recurrent for $C_\phi$ if and only if $\phi$ is an elliptic automorphism.
\end{theorem}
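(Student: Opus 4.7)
The implications $(c) \Rightarrow (b) \Rightarrow (a)$ follow directly from the inclusions \eqref{eq-pipu}, and $(a) \Rightarrow (d)$ is part of the classification of recurrent composition operators induced by linear fractional self-maps obtained in \cite{CMP}. The task therefore reduces to proving $(d) \Rightarrow (c)$, together with the ``moreover'' characterization of the case in which \emph{every} vector of $H^2(\D)$ is $\text{IP}^*$-recurrent.

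For $(d) \Rightarrow (c)$, I would argue case by case on the three types of map in (d). If $\phi$ is hyperbolic with no fixed point in $\D$, or if $\phi$ is a parabolic automorphism, then $C_\phi$ is chaotic on $H^2(\D)$ by the classical results on hypercyclicity and chaos for linear fractional composition operators; consequently $\text{Per}(C_\phi)$ is dense in $H^2(\D)$, and by \eqref{eq-pipu} the inclusion $\text{Per}(C_\phi)\subset \text{IP}^*\text{Rec}(C_\phi)$ delivers (c). If instead $\phi$ is an elliptic automorphism with fixed point $a\in\D$, conjugation by the automorphism of $H^2(\D)$ induced by a Möbius transformation sending $a$ to $0$ shows that $C_\phi$ is similar to $C_{\phi_\lambda}$ with $\phi_\lambda(z)=\lambda z$ for some $\lambda\in\T$, and $\text{IP}^*$-recurrence is invariant under similarity. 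Since the monomials $z^n$ are eigenvectors of $C_{\phi_\lambda}$ with unimodular eigenvalues $\lambda^n$, the polynomials lie in $\text{span}\,\mathcal{E}(C_{\phi_\lambda})$, and Corollary \ref{c-GMM} produces a dense set of $\text{IP}^*$-recurrent vectors.

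For the ``moreover'' part, one direction strengthens the elliptic-automorphism argument. Given $f=\sum_{k\ge 0}a_k z^k\in H^2(\D)$ and $\varepsilon>0$, I would first choose $N$ with $\sum_{k>N}|a_k|^2<\varepsilon/8$, and then apply Lemma \ref{l-kron} to the finite tuple $(\lambda,\lambda^2,\ldots,\lambda^N)$ to obtain an $\text{IP}^*$-set $A\subset\N_0$ on which $|\lambda^{nk}-1|^2<\varepsilon/(2\|f\|^2)$ for all $1\le k\le N$. Splitting
\[
\|C_{\phi_\lambda}^n f-f\|_{H^2}^{\,2}=\sum_{k\le N}|a_k|^2|\lambda^{nk}-1|^2+\sum_{k>N}|a_k|^2|\lambda^{nk}-1|^2
\]
then gives $\|C_{\phi_\lambda}^n f-f\|<\varepsilon$ for every $n\in A$, so every $f\in H^2(\D)$ is $\text{IP}^*$-recurrent, and the similarity above transports this back to $C_\phi$. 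Conversely, if every vector of $H^2(\D)$ is $\text{IP}^*$-recurrent, then every vector is uniformly recurrent, so no non-zero vector can be hypercyclic by Theorem \ref{urv_not_hc}; since $C_\phi$ \emph{is} hypercyclic in the hyperbolic non-fixed and parabolic-automorphism subcases of (d), only the elliptic-automorphism subcase survives.

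The step I expect to be most delicate is the assertion that $C_\phi$ is chaotic in the parabolic-automorphism case: density of periodic points for $C_\phi$ on $H^2(\D)$ in the hyperbolic no-fixed-point case is standard, whereas the parabolic-automorphism case is subtler. A robust workaround is to replace ``density of periodic points'' by ``density of $\text{span}\,\mathcal{E}(C_\phi)$'' and apply Corollary \ref{c-GMM}: after conjugating $\phi$ via a Cayley-type map $h$ to the translation $w\mapsto w+1$ on a half-plane, the functions $f_c(z)=\exp\!\bigl(c\,h(z)\bigr)$ with $\operatorname{Re}(c)<0$ lie in $H^2(\D)$, satisfy $C_\phi f_c=e^c f_c$, and for a judicious choice of $c$ with $e^c\in\T$ produce a dense family of unimodular eigenvectors, at which point Corollary \ref{c-GMM} again closes the argument.
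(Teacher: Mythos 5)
Your main line is exactly the paper's proof: $(a)\Leftrightarrow(d)$ is \cite[Theorem 6.12]{CMP}, the hyperbolic-without-fixed-point and parabolic-automorphism cases are settled by chaoticity of $C_\phi$ — the paper cites \cite[Corollary 7]{Hos03}, which covers precisely the parabolic-automorphism case you flag as delicate — so dense periodic points give $\text{IP}^*$-recurrence of the operator (but not of every vector, since hypercyclic vectors are never uniformly recurrent, Theorem \ref{urv_not_hc}), while the elliptic case is conjugated to a rotation and finished with Lemma \ref{l-kron}, just as you do. Only your optional workaround is off as stated: a parabolic automorphism conjugates to a translation $w\mapsto w+ib$ with $b\in\R\setminus\{0\}$ on the right half-plane (the translation $w\mapsto w+1$ is a parabolic non-automorphism), and $\operatorname{Re}(c)<0$ is incompatible with $e^{c}\in\T$; one must take eigenfunctions with purely imaginary ``frequency'', e.g.\ $f_t=\exp(-t\,h)$ with $t>0$, which are bounded and satisfy $C_\phi f_t=e^{-itb}f_t$ with $e^{-itb}\in\T$, and the density in $H^2(\D)$ of the span of such eigenvectors is itself a nontrivial fact that would need a citation before Corollary \ref{c-GMM} can be invoked.
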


\begin{proof}
By \cite[Theorem 6.12]{CMP} it suffices to show that (d) implies (c), and that the second claim holds. If $\phi$ is hyperbolic  with no fixed point in $\D$, or a parabolic automorphism, then it is chaotic by \cite[Corollary 7]{Hos03}, hence $\text{IP}^*\text{Rec}(C_\phi)$ is dense in but not all of $H^2(\D)$. If $\phi$ is an elliptic automorphism then we conclude as in the previous proof.
\end{proof}

Recurrence properties of further operators can easily be deduced from results in Costakis et al.~\cite[Sections 6, 7]{CMP}.

\section{$\mathcal{F}$-recurrence}\label{s-frec}

In this paper we have concentrated on the most important types of recurrence in order to highlight their differing behaviour. In this section we will briefly study the general notion of $\mathcal{F}$-recurrence, and we consider operators on arbitrary topological vector spaces. The concept was introduced by Furstenberg \cite[Chapter 9]{Fur3} in a non-linear context.

Recall that a non-empty family $\mathcal{F}$ of subsets of $\mathbb{N}_0$ is called a Furstenberg family if $A\in \mathcal{F}$ and $B\supset A$ implies that $B\in \mathcal{F}$; we will assume throughout that $\mathcal{F}$ does not contain the empty set. A Furstenberg family is called left-invariant (right-invariant) if $A\in \mathcal{F}$ and $n\geq 0$ implies that $A-n := \{k-n: k\in A, k\geq n\} \in \mathcal{F}$ (respectively $A+n\in \mathcal{F}$).

\begin{definition}\label{d-frec}
Let $X$ be a topological vector space, $T\in L(X)$, and let $\mathcal{F}$ be a Furstenberg family. Then a vector $x\in X$ is called \textit{$\mathcal{F}$-recurrent} if, for any neighbourhood $U$ of $x$, the return set $N(x,U)$ belongs to $\mathcal{F}$. The set of $\mathcal{F}$-recurrent vectors is denoted by $\mathcal{F}\text{Rec}(T)$. If this set is dense in $X$ then the operator is called \textit{$\mathcal{F}$-recurrent}.
\end{definition}

\begin{remark}\label{r-topfrec}
Costakis et al.~\cite{CMP} have defined $T$ to be recurrent if, for any non-empty open subset $U$ of $X$, the set
\[
N(U,U)=\{ n\geq 0 : T^n(U)\cap U\neq\varnothing\} \text{ is non-empty},
\]
which amounts to demanding that it be in the family of infinite sets. By \cite[Proposition 2.1 with Remark 2.2]{CMP}, this is equivalent to the definition used in this paper provided that $X$ is a Fr\'echet space.

More generally it might be interesting to study the operators $T$ with the following property: for any non-empty open subset $U$ of $X$,
\[
N(U,U)=\{ n\geq 0 : T^n(U)\cap U\neq\varnothing\}\in\mathcal{F}.
\]
Motivated by \cite{CP} one might call these operators \textit{topologically $\mathcal{F}$-recurrent}. This notion is naturally linked to the concept of $\mathcal{F}$-transitive operators as introduced by B\`es et al. \cite{BMPP19}.
\end{remark}

We have preferred the pointwise definition adopted in this paper in order to be close to the corresponding notion of $\mathcal{F}$-hypercyclicity. Recall that an operator $T\in L(X)$ is $\mathcal{F}$-hypercyclic if there is some $x\in X$ such that, for any non-empty open set $V$ in $X$, $N(x,V)\in \mathcal{F}$, see \cite{BMPP16}. The vector $x$ is then called $\mathcal{F}$-hypercyclic.

We have the following generalizations of results in the first part of the paper. The proofs follow as in the special cases, see Theorems \ref{reit-hyp}, \ref{ufreq-hyp}, \ref{t-zero_orbits}, \ref{t-power}, \ref{t-ansari} and \ref{t-LeMu} with Remarks \ref{r-tvs1} and \ref{r-tvs2}.

\begin{theorem}\label{t-hyprec}
Let $X$ be a topological vector space, $T\in L(X)$, and $\mathcal{F}$ a right-invariant Furstenberg family. Then a vector is $\mathcal{F}$-hypercyclic if and only if it is
hypercyclic and $\mathcal{F}$-recurrent.

In particular, $T$ is $\mathcal{F}$-hypercyclic if and only it admits a hypercyclic $\mathcal{F}$-recurrent vector.
\end{theorem}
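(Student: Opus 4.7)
The plan is to prove the characterization of $\mathcal{F}$-hypercyclic vectors directly from the definitions, with the ``in particular'' statement following immediately since $\mathcal{F}$-hypercyclicity of the operator is defined by the existence of a single $\mathcal{F}$-hypercyclic vector.

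For the forward direction, suppose $x$ is $\mathcal{F}$-hypercyclic. Then for every non-empty open $V$, the set $N(x,V)$ belongs to $\mathcal{F}$, hence is non-empty (as $\varnothing \notin \mathcal{F}$), so the orbit of $x$ meets every non-empty open set and $x$ is hypercyclic. Applying the defining condition to $V = U$ for $U$ a neighborhood of $x$ gives $\mathcal{F}$-recurrence.

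The heart of the proof is the backward direction. The plan is to imitate the argument used for the implication (f) $\Rightarrow$ (a) in Theorem~\ref{reit-hyp}. Suppose $x$ is hypercyclic and $\mathcal{F}$-recurrent, and let $V$ be a non-empty open subset of $X$. By hypercyclicity, pick $m \ge 0$ with $T^m x \in V$. By continuity of $T^m$, there exists a neighborhood $U$ of $x$ such that $T^m(U) \subset V$. By $\mathcal{F}$-recurrence, $N(x,U) \in \mathcal{F}$. For any $n \in N(x,U)$ we have $T^{n+m}x = T^m(T^n x) \in T^m(U) \subset V$, so
\[
N(x,U) + m \;\subset\; N(x,V).
\]
Right-invariance of $\mathcal{F}$ yields $N(x,U) + m \in \mathcal{F}$, and upward closure of $\mathcal{F}$ then gives $N(x,V) \in \mathcal{F}$. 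Since $V$ was arbitrary, $x$ is $\mathcal{F}$-hypercyclic.

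There is no real obstacle here: the argument is essentially a one-step ``shift'' trick, and the only non-trivial ingredient is the right-invariance hypothesis on $\mathcal{F}$, which is exactly what allows the exponent $m$ (obtained from hypercyclicity) to be absorbed without leaving the family. The ``in particular'' clause is immediate: if $T$ admits a hypercyclic $\mathcal{F}$-recurrent vector then that vector is $\mathcal{F}$-hypercyclic by the equivalence just proved, and conversely any $\mathcal{F}$-hypercyclic vector witnessing $\mathcal{F}$-hypercyclicity of $T$ is hypercyclic and $\mathcal{F}$-recurrent.
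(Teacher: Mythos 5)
Your proof is correct and follows essentially the same route as the paper, which simply defers to the shift argument of the implication (f) $\Rightarrow$ (a) in Theorem~\ref{reit-hyp}: pick $m$ with $T^m x\in V$, pull back to a neighbourhood $U$ of $x$ with $T^m(U)\subset V$, and use right-invariance together with upward closure of $\mathcal{F}$ to get $N(x,V)\supset N(x,U)+m\in\mathcal{F}$. The forward direction and the ``in particular'' clause are handled exactly as intended, so nothing is missing.
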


For the following results we need the concept of an (u.f.i.)~upper Furstenberg family; we refer to \cite{BG} and Theorem 3.1 there.

\begin{theorem}\label{T_upperhyprec}
Let $X$ be a Fr\'echet space, $T\in L(X)$, and $\mathcal{F}$ a right-invariant upper Furstenberg family. Then the following assertions are equivalent:
\begin{itemize}
\item[\rm (a)] $T$ is $\mathcal{F}$-hypercyclic;
\item[\rm (b)] $T$ is hypercyclic, and $\mathcal{F}\text{\emph{Rec}}(T)$ is a residual set;
\item[\rm (c)] $T$ is hypercyclic, and $\mathcal{F}\text{\emph{Rec}}(T)$ is of second category;
\item[\rm (d)] $T$ admits a hypercyclic $\mathcal{F}$-recurrent vector.
\end{itemize}
In that case the set of hypercyclic $\mathcal{F}$-recurrent vectors is residual.
\end{theorem}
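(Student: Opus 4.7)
The plan is to imitate the proof of Theorem \ref{ufreq-hyp}, which handles the special case of upper frequent hypercyclicity, but now invoke the two key general tools available: Theorem \ref{t-hyprec}, which identifies $\mathcal{F}$-hypercyclic vectors with the hypercyclic $\mathcal{F}$-recurrent vectors whenever $\mathcal{F}$ is right-invariant, and the residuality dichotomy for $\mathcal{F}$-hypercyclic vectors under an upper Furstenberg family, namely that the set of $\mathcal{F}$-hypercyclic vectors of $T$ is either empty or residual in $X$. This dichotomy is precisely the content of \cite[Theorem 3.1]{BG}, which is the reason the statement requires $\mathcal{F}$ to be an (u.f.i.)~upper Furstenberg family and $X$ to be Fr\'echet (so that Baire category is available and the dynamical transitivity arguments of \cite{BG} apply).

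With these tools the cycle of implications is short. For (a) $\Rightarrow$ (b), $\mathcal{F}$-hypercyclicity provides a hypercyclic vector, so $T$ is hypercyclic, and by \cite[Theorem 3.1]{BG} the set of $\mathcal{F}$-hypercyclic vectors is residual; by Theorem \ref{t-hyprec} this set is contained in $\mathcal{F}\text{Rec}(T)$, so the latter is residual. The step (b) $\Rightarrow$ (c) is immediate since $X$ is Baire. For (c) $\Rightarrow$ (d) one uses Birkhoff's transitivity theorem: the set $\text{HC}(T)$ of hypercyclic vectors is residual, so its complement is meager; if $\mathcal{F}\text{Rec}(T)$ is of second category it cannot be contained in a meager set, hence must meet $\text{HC}(T)$, producing a hypercyclic $\mathcal{F}$-recurrent vector. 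Finally, (d) $\Rightarrow$ (a) is an immediate application of Theorem \ref{t-hyprec}.

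The concluding residuality assertion follows from the same chain: assuming (a), Theorem \ref{t-hyprec} identifies the hypercyclic $\mathcal{F}$-recurrent vectors with the $\mathcal{F}$-hypercyclic vectors, which form a residual set by \cite[Theorem 3.1]{BG}.

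I do not foresee any real obstacle, since everything reduces to two previously established results. The only point to verify carefully is that the notion of \emph{upper Furstenberg family} used here matches the hypothesis of \cite[Theorem 3.1]{BG} that yields the residuality dichotomy for $\mathcal{F}$-hypercyclic vectors; this is the reason for the restriction from arbitrary right-invariant Furstenberg families (as in Theorem \ref{t-hyprec}) to right-invariant upper Furstenberg families in this statement. Note in particular that the right-invariance hypothesis, needed to apply Theorem \ref{t-hyprec}, is exactly what allows one to upgrade a hypercyclic $\mathcal{F}$-recurrent vector to an $\mathcal{F}$-hypercyclic vector by translating return sets along a single hit $T^m x \in V$, exactly as in the $(f) \Rightarrow (a)$ step of Theorem \ref{reit-hyp}.
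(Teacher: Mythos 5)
Your proposal is correct and follows essentially the same route as the paper, which proves this theorem ``as in the special cases'' of Theorems \ref{reit-hyp} and \ref{ufreq-hyp}: the chain (a) $\Rightarrow$ (b) $\Rightarrow$ (c) $\Rightarrow$ (d) rests on the empty-or-residual dichotomy for $\mathcal{F}$-hypercyclic vectors from \cite[Theorem 3.1]{BG} together with the residuality of the hypercyclic vectors, and (d) $\Rightarrow$ (a) is exactly the right-invariance shift argument, which you package via Theorem \ref{t-hyprec}. The only cosmetic remark is that the inclusion of the $\mathcal{F}$-hypercyclic vectors in $\mathcal{F}\text{Rec}(T)$ used in (a) $\Rightarrow$ (b) is immediate from the definitions and does not need Theorem \ref{t-hyprec}.
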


\begin{theorem}\label{t-zero_orbits2}
Let $X$ be a Fr\'echet space, $T\in L(X)$, and $\mathcal{F}$ a u.f.i.~upper Furstenberg family. Suppose that there is a dense set of vectors $x\in X$ such that $T^nx\to 0$ as $n\to\infty$. Then $T$ is $\mathcal{F}$-hypercyclic if and only if it is $\mathcal{F}$-recurrent.
\end{theorem}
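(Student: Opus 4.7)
The forward direction is immediate: any $\mathcal{F}$-hypercyclic vector $x$ satisfies $N(x,U)\in\mathcal{F}$ for every neighbourhood $U$ of $x$, hence is $\mathcal{F}$-recurrent; being in particular hypercyclic, such vectors are dense in $X$, so $T$ is $\mathcal{F}$-recurrent.

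For the converse I would replay the proof of Theorem~\ref{t-zero_orbits}(c) at the level of a general u.f.i.\ upper Furstenberg family $\mathcal{F}$, the substitute for the invocation of \cite[Corollary 3.4]{BG} being the Baire-type criterion valid for every u.f.i.\ upper Furstenberg family (\cite[Theorem 3.1]{BG}): to prove $\mathcal{F}$-hypercyclicity of $T$ it is enough to produce, for every non-empty open $V\subset X$, a witness set $A_V\in\mathcal{F}$, uniformly calibrated at some level $\mathcal{F}_\delta$, such that for every non-empty open $U\subset X$ there exists $x\in U$ with $N(x,V)$ containing $A_V$ up to a finite set. Given $V$, use $\mathcal{F}$-recurrence of $T$ to pick $v\in V$, then choose an open neighbourhood $V_0$ of $v$ and a balanced $0$-neighbourhood $W$ with $V_0+W\subset V$, and set $A_V:=N(v,V_0)\in\mathcal{F}$.

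Now for an arbitrary non-empty open $U$, the dense-zero-orbit hypothesis yields $y\in U-v$ with $T^ny\to 0$; set $x:=y+v\in U$. Choosing $n_0$ with $T^ny\in W$ for all $n\geq n_0$, one has
\[
T^nx=T^nv+T^ny\in V_0+W\subset V\qquad\text{for every } n\in A_V\cap[n_0,\infty),
\]
so that $N(x,V)\supset A_V\cap[n_0,\infty)$, which is $A_V$ modulo a finite set. Together with the criterion, this gives $\mathcal{F}$-hypercyclicity.

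The delicate point, and the main obstacle, is that the above criterion really has to tolerate truncation by a finite initial segment: one needs to know that if $A\in\mathcal{F}_\delta$ then $A\setminus F\in\mathcal{F}_{\delta'}$ for some $\delta'>0$ and every finite $F\subset\N_0$, in order that the inclusion $N(x,V)\supset A_V\cap[n_0,\infty)$ really witnesses membership in $\mathcal{F}$ (indeed, in the same calibrated level). This is obvious for $\overline{\text{dens}}$ and $\overline{\text{Bd}}$ as handled in Theorem~\ref{t-zero_orbits}, and is precisely the kind of shift/truncation compatibility built into the ``u.f.i.'' axioms of \cite{BG}; once it is extracted from the definition, the whole argument collapses to the three-line computation above. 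Note that, exactly as in Theorem~\ref{t-zero_orbits}, the same proof in fact shows that every hypercyclic $\mathcal{F}$-recurrent vector of $T$ is $\mathcal{F}$-hypercyclic.
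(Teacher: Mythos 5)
Your proposal is correct and is essentially the paper's own argument: the paper proves Theorem~\ref{t-zero_orbits2} by repeating the proof of Theorem~\ref{t-zero_orbits}(c) with \cite[Corollary 3.4]{BG} replaced by the general criterion of \cite[Theorem 3.1]{BG} for u.f.i.\ upper Furstenberg families, exactly as you do, the u.f.i.\ hypothesis being precisely what tolerates discarding the finite initial segment $[0,n_0)$ of the witness set. For the forward implication, note that the density of $\mathcal{F}$-recurrent vectors is cleanest obtained from the residuality of the set of $\mathcal{F}$-hypercyclic vectors contained in \cite[Theorem 3.1]{BG}, rather than from the single hypercyclic vector alone.
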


\begin{theorem}\label{t-power3}
Let $X$ be a topological vector space and $T\in L(X)$. If $T$ is power bounded, then the set $\mathcal{F}\emph{\text{Rec}}(T)$ is closed.
\end{theorem}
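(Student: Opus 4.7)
The statement is a direct generalization of Theorem \ref{t-power} (which handled uniform recurrence on a Fréchet space), and my plan is essentially to transcribe that argument in the more general setting. The proof there used only two things: the arithmetic of $0$-neighborhoods in a topological vector space, and the fact that the family of syndetic sets is upward closed. Since any Furstenberg family $\mathcal{F}$ is by definition upward closed, and since power boundedness in a TVS is exactly the equicontinuity of $(T^n)_{n \geq 0}$, the same scheme should carry through verbatim.

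Concretely, I would fix $x \in \overline{\mathcal{F}\text{Rec}(T)}$ and an arbitrary $0$-neighborhood $W$, and aim to show $N(x, x+W) \in \mathcal{F}$. First choose a balanced $0$-neighborhood $W_1$ with $W_1 + W_1 + W_1 \subset W$ (balanced $0$-neighborhoods form a base in any TVS). Power boundedness then yields a balanced $0$-neighborhood $W_2 \subset W_1$ with $T^n(W_2) \subset W_1$ for every $n \geq 0$. Since $x \in \overline{\mathcal{F}\text{Rec}(T)}$, the set $x + W_2$ meets $\mathcal{F}\text{Rec}(T)$, so there exists $y \in \mathcal{F}\text{Rec}(T)$ with $y - x \in W_2$; because $W_2$ is balanced, also $x - y \in W_2$.

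Now, by $\mathcal{F}$-recurrence of $y$ applied to the neighborhood $y + W_1$, the set $A := \{n \geq 0 : T^n y - y \in W_1\}$ belongs to $\mathcal{F}$. For any $n \in A$, the decomposition
\[
T^n x - x \;=\; T^n(x - y) \,+\, (T^n y - y) \,+\, (y - x) \;\in\; W_1 + W_1 + W_1 \;\subset\; W
\]
shows that $A \subset N(x, x+W)$. Since $\mathcal{F}$ is upward closed, $N(x, x+W) \in \mathcal{F}$, and since $W$ was arbitrary, $x \in \mathcal{F}\text{Rec}(T)$, as required.

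There is no real obstacle here; the only mild subtlety is ensuring we never implicitly invoke a countable neighborhood base (the space is a general TVS, possibly non-metrizable). This is handled by working directly with the neighborhood filter of $x$ rather than a sequence of neighborhoods, which is exactly what the argument above does. No property of $\mathcal{F}$ beyond upward closure is needed, so the conclusion holds for every Furstenberg family, and in particular recovers the five cases in Theorem \ref{t-power}.
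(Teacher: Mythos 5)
Your proof is correct and is essentially the paper's own argument: for Theorem \ref{t-power3} the paper simply refers back to the proof of Theorem \ref{t-power}, whose three-term decomposition $T^n x - x = T^n(x-y) + (T^n y - y) + (y-x)$ together with equicontinuity you reproduce verbatim. Your explicit use of balanced $0$-neighbourhoods and of the upward closure of $\mathcal{F}$ are exactly the (minor) details needed to make that argument work for a general Furstenberg family on an arbitrary topological vector space.
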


The property CuSP for a family of subsets of $\N_0$ was introduced in Section \ref{s-recspec}.

\begin{theorem}\label{t-AnsLeMu}
Let $X$ be a topological vector space, $T\in L(X)$, and $\mathcal{F}$ a Furstenberg family with CuSP.

\emph{(a)} Let $p\geq 1$. Assume that, for any $A\subset\N_0$, $A\in \mathcal{F}$ if and only if $pA\in \mathcal{F}$. Then $T$ and $T^p$ have the same $\mathcal{F}$-recurrent vectors. In particular, if $T$ is $\mathcal{F}$-recurrent then so is $T^p$.

\emph{(b)} Let $\lambda$ be a scalar with $|\lambda|=1$. Then $T$ and $\lambda T$ have the same $\mathcal{F}$-recurrent vectors. In particular, if $T$ is $\mathcal{F}$-recurrent then so is $\lambda T$.
\end{theorem}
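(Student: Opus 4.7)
The plan is to transport the proofs of Theorems \ref{t-ansari} and \ref{t-LeMu} to the abstract setting. A careful rereading shows that those arguments invoked only three properties of the recurrence families in question: CuSP (in both proofs); the dilation equivalence ``$A$ is syndetic iff $pA$ is syndetic'' (only in the $T^p$ proof); and a countable local base at $x$. The first two are precisely the hypotheses now placed on $\mathcal{F}$, and the third is replaced by the neighborhood filter $\mathcal{U}$ at $x$, exactly as flagged in Remarks \ref{r-tvs1} and \ref{r-tvs2}. Thus essentially no new idea is needed; the proofs transfer line by line.

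For part (a), I would reduce to the case $p$ prime by iteration. Given $x\in\mathcal{F}\text{Rec}(T)$, for each $U\in\mathcal{U}$ set $J_U=\{n\bmod p : n\geq 0,\ T^nx\in U\}\subset\Z/p\Z$. The family $(J_U)_{U\in\mathcal{U}}$, directed downward by inclusion of neighborhoods, consists of nonempty subsets of the finite group $\Z/p\Z$, hence it is eventually constant along a cofinal sub-filter, equal to some nonempty $J\subset\Z/p\Z$. The continuity argument from the proof of Theorem \ref{t-ansari} shows $J$ is a subgroup, so since $p$ is prime either $J=\{0\}$ or $J=\Z/p\Z$. In the first case, for small $U$ the set $N(x,U)\in\mathcal{F}$ lies in $p\N_0$, and the dilation hypothesis yields $\frac{1}{p}N(x,U)\in\mathcal{F}$. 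In the second case, continuity produces shifts $n_j\equiv -j\pmod p$ and a neighborhood $V\subset U$ with $T^{n_j}(V)\subset U$; CuSP applied to $N(x,V)\in\mathcal{F}$ and the residue partition $\N_0=\bigcup_{j=0}^{p-1}I_j$ yields $A:=\bigcup_{j=0}^{p-1}(n_j+N(x,V)\cap I_j)\in\mathcal{F}$, a set contained in $N(x,U)\cap p\N_0$, and the dilation hypothesis again gives $\frac{1}{p}A\in\mathcal{F}$.

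For part (b), the same transcription applies to the proof of Theorem \ref{t-LeMu}, which already uses only CuSP; note that we cannot reduce the real case via $(-T)^2=T^2$ because the dilation hypothesis of part (a) is not assumed here, but as observed in that proof, the complex argument runs in $\R$ as well. For $x\in\mathcal{F}\text{Rec}(T)$ and $\lambda\in\T$, set $\Lambda_U=\{\lambda^n:n\geq 0,\ T^nx\in U\}$ and $\Lambda=\bigcap_{U\in\mathcal{U}}\overline{\Lambda_U}$; this is a nonempty closed subsemigroup of $\T$ (nonemptiness is the finite intersection property on the compact set $\T$), hence either $\Lambda=\T$ or a finite cyclic subgroup. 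In either case, a finite covering of $\T$ and continuity produce shifts $n_1,\ldots,n_N$, a neighborhood $V\subset U$, and sets $I_j$ with $\N_0=\bigcup_j I_j$ such that $(\lambda T)^{n_j+n}x\in U$ whenever $n\in N(x,V)\cap I_j$; CuSP applied to $N(x,V)\in\mathcal{F}$ then produces a set in $\mathcal{F}$ contained in $\{n\geq 0:(\lambda T)^nx\in U\}$. The main obstacle is the routine verification that the stabilization of $(J_U)$ in (a) and the nonemptiness of $\Lambda$ in (b) still work with the full neighborhood filter in place of a countable base; both reduce to elementary compactness/finiteness arguments, after which every estimate from the proofs of Theorems \ref{t-ansari} and \ref{t-LeMu} carries over without modification.
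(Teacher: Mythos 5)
Your overall strategy is exactly the one the paper intends: rerun the proofs of Theorems \ref{t-ansari} and \ref{t-LeMu} verbatim, with the neighbourhood filter replacing the countable local base (Remarks \ref{r-tvs1} and \ref{r-tvs2}), after checking that the only combinatorial inputs are CuSP and, for powers, the dilation hypothesis. Your part (b) is fine, as are the filter modifications in both parts: downward directedness of the neighbourhood filter gives the stabilization of the finite sets $J_U$, and compactness of $\T$ gives $\Lambda\neq\varnothing$; your observation that the real case of (b) cannot be reduced via $(-T)^2=T^2$ and must instead run the circle argument over $\{\pm 1\}$ is also correct.

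There is, however, one step in part (a) that fails as stated: the opening reduction ``to the case $p$ prime by iteration''. For $p=qr$ the iteration $\mathcal{F}\text{Rec}(T)\subset\mathcal{F}\text{Rec}(T^q)\subset\mathcal{F}\text{Rec}((T^q)^r)$ needs the hypothesis ``$A\in\mathcal{F}$ iff $qA\in\mathcal{F}$'' (and likewise for $r$), whereas Theorem \ref{t-AnsLeMu}(a) assumes the equivalence only for the single exponent $p$, and CuSP together with dilation by $p$ does not obviously yield dilation by divisors of $p$. In the concrete setting of Theorem \ref{t-ansari} this is harmless because syndeticity and the various densities are invariant under dilation by every positive integer, but in the abstract setting your subsequent dichotomy ``$J=\{0\}$ or $J=\Z/p\Z$'' genuinely relies on a primality you have not earned. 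The repair is easy and needs no new idea: drop the reduction and treat an arbitrary subgroup $J\leq \Z/p\Z$ directly. Since $J$ is a group, $-j\bmod p\in J$ for every $j\in J$, so shifts $n_j\equiv -j\pmod p$ with $T^{n_j}x\in U$ exist for all $j\in J$ (this is the only place where the paper's proof of Theorem \ref{t-ansari} used that $J$ is the full group, as it notes parenthetically); choose $V$ small enough, inside the stabilized regime, that $T^{n_j}(V)\subset U$ for the finitely many $j\in J$. Then every $n\in N(x,V)$ has residue in $J$, so applying CuSP to $N(x,V)\in\mathcal{F}$ with the residue classes $I_j$, $j\in J$, supplemented by the remaining residue classes with shift $0$ (they meet $N(x,V)$ trivially), produces a set $A\in\mathcal{F}$ with $A\subset N(x,U)\cap p\N_0$, and the dilation hypothesis for $p$ gives $\tfrac1p A\in\mathcal{F}$, finishing exactly as in your full-group case.
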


Theorems \ref{t-hyprec} and \ref{t-AnsLeMu} have an interesting application to $\mathcal{F}$-hypercyclicity.

\begin{theorem}\label{t-AnsLeMuHyp}
Let $X$ be a topological vector space, $T\in L(X)$, and $\mathcal{F}$ a right-invariant Furstenberg family with CuSP.

\emph{(a)} Let $p\geq 1$. Assume that, for any $A\subset\N_0$, $A\in \mathcal{F}$ if and only if $pA\in \mathcal{F}$. Then $T$ and $T^p$ have the same $\mathcal{F}$-hypercyclic vectors. In particular, if $T$ is $\mathcal{F}$-hypercyclic then so is $T^p$.

\emph{(b)} Let $\lambda$ be a scalar with $|\lambda|=1$. Then $T$ and $\lambda T$ have the same $\mathcal{F}$-hypercyclic vectors. In particular, if $T$ is $\mathcal{F}$-hypercyclic then so is $\lambda T$.
\end{theorem}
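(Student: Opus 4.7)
The plan is to derive Theorem~\ref{t-AnsLeMuHyp} as an almost immediate consequence of the two main ingredients already in place: Theorem~\ref{t-hyprec}, which characterizes $\mathcal{F}$-hypercyclicity as the conjunction of hypercyclicity and $\mathcal{F}$-recurrence whenever $\mathcal{F}$ is right-invariant, and Theorem~\ref{t-AnsLeMu}, which provides the Ansari- and León-Müller-type statements at the level of $\mathcal{F}$-recurrence under the CuSP hypothesis. The remaining piece is the corresponding classical statement for plain hypercyclicity.

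For part (a), I would fix $p\geq 1$ and a vector $x\in X$. By Theorem~\ref{t-hyprec}, applied in turn to $T$ and to $T^p$ (both of which are operators on the same topological vector space $X$, and $\mathcal{F}$ is right-invariant by assumption), one has the equivalences
\[
x\in \mathcal{F}\text{-HC}(T) \ \Longleftrightarrow\ x\in \text{HC}(T)\cap \mathcal{F}\text{Rec}(T),
\]
\[
x\in \mathcal{F}\text{-HC}(T^p) \ \Longleftrightarrow\ x\in \text{HC}(T^p)\cap \mathcal{F}\text{Rec}(T^p).
\]
Ansari's theorem gives $\text{HC}(T)=\text{HC}(T^p)$, and Theorem~\ref{t-AnsLeMu}(a), whose hypotheses $\mathcal{F}\in\text{CuSP}$ and $A\in\mathcal{F}\Leftrightarrow pA\in\mathcal{F}$ are precisely those assumed here, yields $\mathcal{F}\text{Rec}(T)=\mathcal{F}\text{Rec}(T^p)$. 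Intersecting, one concludes $\mathcal{F}\text{-HC}(T)=\mathcal{F}\text{-HC}(T^p)$, which settles (a).

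Part (b) is entirely parallel: for $\lambda\in\T$, Theorem~\ref{t-hyprec} reduces the question to comparing $\text{HC}(T)\cap \mathcal{F}\text{Rec}(T)$ with $\text{HC}(\lambda T)\cap \mathcal{F}\text{Rec}(\lambda T)$, while the León-Müller theorem identifies the two hypercyclic sets and Theorem~\ref{t-AnsLeMu}(b) identifies the two $\mathcal{F}$-recurrent sets (no additional condition on $\mathcal{F}$ beyond CuSP is needed in this case, as in the proof of Theorem~\ref{t-LeMu}).

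The only real issue, rather than a genuine obstacle, is making sure that the classical Ansari and León-Müller theorems are available in the full generality of operators on a topological vector space, since Theorem~\ref{t-hyprec} is stated at that level. For this, one can invoke the topological-vector-space versions of these theorems, whose standard proofs (see e.g.\ \cite{GrPe11}) only use continuity of $T$ and the neighbourhood-filter arguments that also underlie Remarks~\ref{r-tvs1} and \ref{r-tvs2}. Once that is granted, the proof is just the bookkeeping displayed above, and the ``in particular'' clauses follow by taking the closure: density of $\mathcal{F}\text{-HC}(T)$ transfers to $\mathcal{F}\text{-HC}(T^p)$ and $\mathcal{F}\text{-HC}(\lambda T)$ because the two sets are actually equal.
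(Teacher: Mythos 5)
Your proposal is correct and follows essentially the same route as the paper, which presents Theorem~\ref{t-AnsLeMuHyp} precisely as the combination of Theorem~\ref{t-hyprec} (for right-invariant $\mathcal{F}$, the $\mathcal{F}$-hypercyclic vectors are exactly the hypercyclic $\mathcal{F}$-recurrent vectors) with Theorem~\ref{t-AnsLeMu} and the classical Ansari and Le\'on--M\"uller theorems. Your remark about needing these classical theorems in the topological-vector-space setting is the right point to flag, and the known general versions indeed cover it.
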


\section*{Acknowledgements}

We would like to thank the referee, whose careful reading and valuable observations have led to an improvement of the presentation of the article.

~\\[2mm]

\noindent
\parbox[t][3cm][l]{7cm}{\small
\noindent
Antonio Bonilla\\
Departamento de An\'alisis Matem\'atico\\
Universidad de La Laguna\\
C/Astrof\'{\i}sico Francisco S\'anchez, s/n\\
38721 La Laguna, Tenerife, Spain\\
E-mail: a.bonilla@ull.es}
\parbox[t][3cm][l]{8cm}{\small
Karl-G. Grosse-Erdmann\\
Département de Mathématique\\
Universit\'e de Mons\\
20 Place du Parc\\
7000 Mons, Belgium\\
E-mail: kg.grosse-erdmann@umons.ac.be}
\parbox[t][3cm][l]{7cm}{\small
\noindent
Antoni L\'opez-Mart\'{\i}nez\\
Institut Universitari de Matem\`atica\\ Pura i Aplicada\\
Universitat Polit\`ecnica de Val\`encia\\
Edifici 8E, Acces F, 4a planta\\
46022 Val\`encia, Spain\\
E-mail: anlom15a@upv.es}
\parbox[t][3cm][l]{8cm}{\small
Alfred Peris\\
Institut Universitari de Matem\`atica\\ Pura i Aplicada\\
Universitat Polit\`ecnica de Val\`encia\\
Edifici 8E, Acces F, 4a planta\\
46022 Val\`encia, Spain\\
E-mail: aperis@mat.upv.es}

\end{document}